\definecolor{listinggray}{gray}{0.9}
\definecolor{lbcolor}{rgb}{0.9,0.9,0.9}
\definecolor{Darkgreen}{rgb}{0,0.4,0}
\newtheorem{thm}{Theorem}
\newtheorem{lemma}[thm]{Lemma}
\newtheorem{claim}[thm]{Claim}
\newtheorem{conj}[thm]{Conjecture}
\newtheorem{theorem}[thm]{Theorem}
\newtheorem{example}[thm]{Example}
\newtheorem*{example*}{Example}
\newtheorem{remark}[thm]{Remark}
\newtheorem*{definition*}{Definition}
\newtheorem*{remark*}{Remark}
\newtheorem{question}[thm]{Question}
\newcommand*{\myproofname}{Proof}
\def\qed{\hfill\ifhmode\unskip\nobreak\fi\qquad\ifmmode\Box\else\hfill$\Box$\fi}
\title{Partition subcubic planar graphs into independent sets}
\date{\today}
\author{
Xujun Liu\thanks{Department of Foundational Mathematics, Xi'an Jiaotong-Liverpool University, Suzhou, Jiangsu Province, 215123, China, xujun.liu@xjtlu.edu.cn; the research of X. Liu was supported by the National Natural Science Foundation of China under grant No.~12401466 and the Research Development Fund RDF-21-02-066 of Xi'an Jiaotong-Liverpool University.} \and
Yan Wang\thanks{School of Mathematical Sciences, Shanghai Frontier Science Center of Modern Analysis
(CMA-Shanghai), Shanghai Jiao Tong University, Shanghai, 200240, China, yan.w@sjtu.edu.cn; the research of Y. Wang was supported by National Key R\&D Program of China under grant No. 2022YFA1006400, Shanghai Municipal Education Commission (No. 2024AIYB003), National Natural Science Foundation of China under grant No. 12201400 and Explore X project of Shanghai Jiao Tong University.}
 }
\begin{document}
	\maketitle

\begin{abstract}
A packing $(1^{\ell}, 2^k)$-coloring of a graph $G$ is a partition of $V(G)$ into $\ell$ independent sets and $k$ $2$-packings (whose pairwise vertex distance is at least $3$). The square coloring of planar graphs was first studied by Wegner in 1977. Thomassen and independently Hartke et al. proved one can always square color a cubic planar graph with $7$ colors, i.e., every subcubic planar graph is packing $(2^7)$-colorable. We focus on packing $(1^{\ell}, 2^k)$-colorings, which lie between proper coloring and square coloring. Gastineau and Togni proved every subcubic graph is packing $(1,2^6)$-colorable and asked whether every subcubic graph except the Petersen graph is packing $(1,2^5)$-colorable. 


In this paper, we prove an analogue result of Thomassen and Hartke et al. on packing coloring that every subcubic planar graph is packing $(1,2^5)$-colorable. This also answers the question of Gastineau and Togni affirmatively for subcubic planar graphs. Moreover, we prove that there exists an infinite family of subcubic planar graphs that are not packing $(1,2^4)$-colorable, which shows that our result is the best possible. Besides, our result is also sharp in the sense that the disjoint union of Petersen graphs is subcubic and non-planar, but not packing $(1,2^5)$-colorable.

\vspace{3mm} \emph{Keywords}: square coloring, planar graphs, cubic graphs, independent sets, packing coloring.

\end{abstract}




\section{Introduction}

An $i$-packing in a graph $G$ is a set of vertices whose pairwise vertex distance is at least $i+1$. Let $S=(s_1,s_2,...,s_k)$ be a non-decreasing sequence of positive integers. A {\em packing $S$-coloring} of a graph $G$ is a partition of $V(G)$ into sets $V_1,...,V_k$ such that each $V_i$ is an $s_i$-packing. In particular, a packing $(1^{\ell}, 2^k)$-coloring of a graph $G$ is a partition of $V(G)$ into $\ell$ independent sets and $k$ $2$-packings. This concept has drawn much attention in graph coloring (e.g., see~\cite{BKL1, BKL2, BF1, BKRW2, FKL1, GT2, GHHHR1, LZZ1, MT2, TT1}). 
The {\em packing chromatic number} (PCN), $\chi_p(G)$, of a graph $G$ is the minimum $k$ such that $G$ has a packing $(1,2,...,k)$-coloring. The notion of PCN was first studied under the name broadcast chromatic number by Goddard, Hedetniemi, Hedetniemi, Harris, and Rall~\cite{GHHHR1} in 2008, which was motivated by a frequency assignment problem in broadcast networks. They asked whether the PCN of subcubic graphs is bounded by a constant. 
Balogh, Kostochka, and Liu~\cite{BKL1} answered their question in the negative using the probabilistic method. Later, Bre\v{s}ar and Ferme~\cite{BF1} gave an explicit construction which shows it is unbounded. 
The $1$-subdivision of a graph $G$ is obtained by replacing each edge with a path of two edges. Gastineau and Togni~\cite{GT2} asked whether the PCN of the $1$-subdivision of a subcubic graph is bounded by $5$ and subsequently, Bre\v{s}ar, Klav\v zar, Rall, and Wash~\cite{BKRW2} conjectured it is true.
Balogh, Kostochka, and Liu~\cite{BKL2} proved the first upper bound on the PCN of the $1$-subdivision of subcubic graphs. This bound was recently improved to $6$ by Liu, Zhang, and Zhang~\cite{LZZ1}. 



The famous Four Color Theorem, which was proved by Appel and Haken~\cite{AH1}, Appel, Haken, and Koch~\cite{AHK1}, as well as Robertson, Sanders, Seymour, and Thomas~\cite{RSST1} states that every planar graph is $4$-colorable, i.e., every planar graph is packing $(1^4)$-colorable.  The square of a graph $G$, denoted by $G^2$, is the graph obtained from $G$ by adding the edges joining vertices with distance exactly two. Wegner~\cite{W1} conjectured in 1977 that if $G$ is a planar graph with maximum degree $\Delta$ then

\[ 
\chi(G^2) \le
\begin{cases} 
      7 & \Delta = 3, \\
      \Delta + 5 & 4 \le \Delta \le 7, \\
      \lfloor \frac{3 \Delta}{2} \rfloor & \Delta \ge 8.
   \end{cases}
\]

Note that a coloring of $G^2$ using $k$ colors is equivalent to a packing $(2^k)$-coloring of $G$. Thomassen~\cite{T1} and independently Hartke, Jahanbekam, and Thomas~\cite{HJT1} confirmed Wegner's conjecture~\cite{W1} for the case when $\Delta = 3$ by proving that every subcubic planar graph is packing $(2^7)$-colorable. Their result is also sharp due to the existence of subcubic planar graphs that are not packing $(2^6)$-colorable.

\begin{theorem}[Thomassen~\cite{T1} and Hartke et al.~\cite{HJT1}]
Every subcubic planar graph is packing $(2^7)$-colorable. 
\end{theorem}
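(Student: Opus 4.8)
The plan is to recast the statement as a bound on the chromatic number of the square: a packing $(2^7)$-coloring of $G$ is exactly a proper $7$-coloring of $G^2$, the graph in which two vertices are adjacent iff their distance in $G$ is at most $2$. Since $G$ is subcubic, each vertex has at most $3$ neighbours and at most $3\cdot 2=6$ vertices at distance exactly $2$, so $\Delta(G^2)\le 9$ and the greedy bound gives $\chi(G^2)\le 10$; the task is to exploit planarity to push this down to $7$. I would argue by contradiction, fixing a counterexample $G$ that is minimal with respect to, say, $|V(G)|$ and then $|E(G)|$, and first record the easy structural facts: $G$ is connected, has no vertex of degree $1$, and (by minimality) cannot contain any configuration from which a valid colouring of a smaller subcubic planar graph extends.

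The core of the argument is a catalogue of \emph{reducible configurations}. For each configuration $H$ one builds a smaller subcubic planar graph $G'$ by a local operation --- deleting a low-degree vertex, suppressing a degree-$2$ vertex, or identifying two vertices that are far apart in $G$ --- so that by minimality $G'$ admits a packing $(2^7)$-colouring, and then one shows this colouring extends to $G$. Extension works at a vertex $v$ precisely when the number of colours already forbidden on $v$ (those used within distance $2$) is at most $6$, leaving a free colour. The simplest reductions handle vertices of degree at most $2$ and adjacent pairs of low-degree vertices, where the distance-$2$ neighbourhood is small enough that a free colour is guaranteed; progressively one rules out short faces incident to several low-degree vertices and other small dense substructures.

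Finally I would deploy discharging to show that a subcubic planar graph free of all reducible configurations cannot exist. Assign to each vertex and face the charge $\mu(x)=d(x)-4$; by Euler's formula,
\[
\sum_{v\in V(G)}(d(v)-4)+\sum_{f\in F(G)}(d(f)-4)=4|E(G)|-4\big(|V(G)|+|F(G)|\big)=-8,
\]
so the total charge is negative. In a subcubic graph every vertex has charge at most $-1$ and triangles have charge $-1$, while faces of length at least $5$ carry positive charge; the discharging rules move charge from the long faces to their incident deficient vertices and triangular faces. The goal is to verify that, in the absence of the reducible configurations, every vertex and every face ends with nonnegative charge, contradicting the total of $-8$ and hence the existence of $G$.

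The main obstacle is the reducibility analysis rather than the discharging bookkeeping. Because adjacency in $G^2$ is governed by distances in $G$, local surgery is delicate: deleting or identifying vertices alters distance-$2$ relationships throughout a neighbourhood, so one must check that the reduced graph stays subcubic and planar \emph{and} that no new distance-$2$ conflicts sabotage the extension. Guaranteeing a free colour typically requires the forbidden set at each re-coloured vertex to miss at least one of the seven colours, which fails for the densest local structures and forces a long, carefully organised case analysis (indeed both existing proofs devote their bulk to exactly this). Assembling a set of configurations simultaneously reducible and unavoidable --- so that the discharging closes --- is where essentially all the work lies.
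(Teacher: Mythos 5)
This statement is not proved in the paper at all: it is quoted as a known theorem of Thomassen and of Hartke, Jahanbekam and Thomas, with the proof living entirely in the cited references. So there is no in-paper argument to compare against; what can be judged is whether your proposal would constitute a proof on its own. It would not. You correctly identify the translation to $\chi(G^2)\le 7$, the minimal-counterexample setup, and the general discharging-plus-reducible-configurations framework that both cited proofs do in fact use (Hartke et al.\ with substantial computer assistance). But everything that makes the theorem true is deferred: you exhibit no explicit unavoidable set of configurations, you verify reducibility only for the trivial cases (a vertex of degree at most $2$ has $|N^2[v]|\le 7$, so a colour is always free after deletion; this merely reduces to the cubic case, where $|N^2(v)|$ can be $9$ and greedy extension fails), and the discharging is described only as a goal. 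For a distance-two colouring the reducibility step is genuinely delicate — local surgery changes distances globally, and the forbidden sets at the vertices being recoloured routinely have size $7$ or more, which is why the published proofs run to dozens of pages or require computer verification of thousands of cases. A plan that says ``one rules out short faces incident to several low-degree vertices and other small dense substructures'' and ``the goal is to verify that every vertex and face ends with nonnegative charge'' is a research programme, not a proof; as you yourself concede, ``essentially all the work lies'' in the part you have not done.

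A secondary, fixable issue: the charge $\mu(x)=d(x)-4$ is asserted without any evidence that it closes. In a cubic graph every vertex carries charge $-1$, so the long faces must export one unit to each incident vertex \emph{and} compensate the short faces, and you give no rule, no case check, and no link between the rules and the configurations you would need to exclude. Compare this with the paper's own (different) theorem, where the authors use the charging $2d(v)-6$ and $\ell(f)-6$, state three explicit discharging rules, and then verify face by face that the roughly two dozen excluded configurations make every final charge nonnegative. That is the level of detail a proof of the $(2^7)$ theorem would also require, and it is exactly what is absent here.
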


Bousquet, Deschamps, Meyer, and Pierron~\cite{BDMP1} recently showed every planar graph with maximum degree at most four is packing $(2^{12})$-colorable. The square coloring of planar graphs with girth conditions has also been considered by many researchers. For example, Dvo\v r\'ak, Kr\'al, Nejedl\'y, and \v Skrekovski~\cite{DKNS1} showed a planar graph with girth at least six and sufficiently large maximum degree $\Delta$ 
 is packing $2^{\Delta + 2}$-colorable. The list version of square coloring is also studied (e.g., see~\cite{AEH1, BLP1, CK1, KP1} ). In particular, Cranston and Kim~\cite{CK1} proved the list chromatic number of the square of a subcubic graph, except the Petersen graph, is at most $8$.

By Brooks' theorem~\cite{B1}, every subcubic graph except $K_4$ has a proper $3$-coloring, i.e., a packing $(1^3)$-coloring. Gastineau and Togni~\cite{GT2} showed that if one can prove every subcubic graph except the Petersen graph has a packing $(1^2,2^2)$-coloring then the conjecture of Bre\v sar et al.~\cite{BKRW2} is true. This also motivates the study of packing $(1^{\ell}, 2^k)$-colorings, which lie between proper coloring and square coloring. Gastineau and Togni~\cite{GT2} proved that every subcubic graph is packing $(1^2,2^3)$-colorable. Furthermore, they asked whether every subcubic graph except the Petersen graph is packing $(1,1,2,3)$-colorable. This question still remains open. Very recently, Liu et al.~\cite{LZZ1} showed every subcubic graph is packing $(1,1,2,2,3)$-colorable and conjectured that every subcubic graph except the Petersen graph is packing $(1^2,2^2)$-colorable. 


For packing $(1,2^k)$-coloring, Tarhini and Togni~\cite{TT1} proved that every cubic Halin graph is packing $(1,2^5)$-colorable. Mortada and Togni~\cite{MT2} showed that every subcubic graph with no adjacent heavy vertices (degree three vertices with all neighbours also being $3$-vertices) is packing $(1,2^5)$-colorable. Gastineau and Togni~\cite{GT2} proved every subcubic graph is packing $(1,2^6)$-colorable. Moreover, they asked the following question after performing a computer search on graphs with small order.

\begin{question}[Gastineau and Togni~\cite{GT2}]\label{mainquestion}
Is it true that every subcubic graph except the Petersen graph is packing $(1,2^5)$-colorable?   
\end{question}

All graphs in this paper are simple. For a graph $G$, let $V(G)$ and $E(G)$ denote the vertex set and the edge set of $G$. For a vertex $u$ in a graph $G$, we use $N(u)$ to denote the neighbours of $u$, $N[u]$ to denote $N(u) \cup \{u\}$, $N^2(u)$ to denote the set of vertices of distance one and two from $u$, and $N^2[u]$ to denote $N^2(u) \cup \{u\}$.


\section{Main result and sharpness example}

In this paper, we prove an analogue result of Thomassen~\cite{T1} and Hartke et al.~\cite{HJT1} that all subcubic planar graphs are packing $(1,2^5)$-colorable. This also answers Question~\ref{mainquestion} in the affirmative for subcubic planar graphs. 

\begin{theorem}\label{planar}
Every subcubic planar graph is packing $(1,2^5)$-colorable.
\end{theorem}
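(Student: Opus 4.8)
The plan is to prove this by the discharging method, following the general strategy used by Thomassen and Hartke et al. for the square-coloring analogue. I would argue by contradiction: suppose the theorem fails, and let $G$ be a counterexample that is minimal, say with respect to the number of vertices (and, if needed, breaking ties by the number of edges). The goal is to derive a contradiction by first establishing a list of reducible configurations---local structures that cannot appear in $G$---and then using a discharging argument on a plane embedding of $G$ to show that every subcubic planar graph must contain at least one such configuration.

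\medskip

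\noindent\textbf{Step 1: Reducible configurations.} First I would show that $G$ contains no small separating structures and no low-degree vertices of certain types. For instance, $G$ must be connected, $2$-connected, and should have no vertices of degree $0$ or $1$ (these can be colored greedily after extending a coloring of the smaller graph obtained by deleting them, since a $1$-packing color class and the $2$-packing classes each forbid only a bounded number of colors within the relevant neighborhood). More substantively, I would catalog configurations such as short paths of degree-$2$ vertices, adjacent degree-$2$ vertices, triangles and short cycles with specified degree sequences, and certain small subgraphs attached by few edges. For each such configuration $H$, the argument is: contract or delete part of $H$ to obtain a smaller subcubic planar graph $G'$, invoke minimality to get a packing $(1,2^5)$-coloring of $G'$, and then extend it to $G$. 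The extension step is where the counting matters: at each uncolored vertex $v$, the color $1$ (the independent-set color) is forbidden only by colored neighbors in $N(v)$, while each of the five $2$-packing colors is forbidden by colored vertices in $N^2[v]$; I would verify that in each configuration the number of forbidden colors is strictly less than $6$, leaving a valid choice, possibly after recoloring a bounded neighborhood.

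\medskip

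\noindent\textbf{Step 2: Discharging.} With the reducible configurations in hand, I would assign to each vertex and face of the plane embedding an initial charge---the standard choice being $\mathrm{ch}(v) = \deg(v) - 4$ for vertices and $\mathrm{ch}(f) = \operatorname{len}(f) - 4$ for faces, so that by Euler's formula the total charge is $-8$. I would then design discharging rules that move charge from faces and high-degree (degree-$3$) vertices toward degree-$2$ vertices and short faces. The aim is to prove that, in the absence of every reducible configuration, every vertex and face ends with nonnegative final charge, contradicting the negative total. The crux is to choose the rules so that a degree-$2$ vertex (initial charge $-2$) receives enough charge from its neighbors and incident faces, and so that triangular faces (initial charge $-1$) are compensated---precisely the places where the excluded configurations guarantee that enough ``donors'' are present.

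\medskip

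\noindent I expect the main obstacle to be \emph{Step 1}, specifically the design and verification of a complete enough set of reducible configurations. The difficulty is intrinsic to packing colorings: a $2$-packing color is constrained by the second neighborhood $N^2[v]$, not just $N(v)$, so recoloring one vertex can propagate conflicts two steps away, and the extension arguments become genuine case analyses over the structure of $N^2[v]$ rather than simple greedy choices. Balancing this against the discharging in Step 2 is delicate: a configuration must be simultaneously reducible (colorable) and forbidden (so the discharging can rely on its absence), and the sharpness of the bound $(1,2^5)$---witnessed by the infinite family of non-$(1,2^4)$-colorable planar graphs and by the Petersen graph for the non-planar case---means there is essentially no slack, so the configuration list and the charge bookkeeping must be tight rather than generous.
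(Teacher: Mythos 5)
Your outline matches the paper's strategy at the level of slogans---minimal counterexample, reducible configurations, discharging on a plane embedding---but as written it contains essentially none of the mathematical content that makes the theorem true, and the one concrete structural choice you do make points in a slightly wrong direction. The paper's first substantive step is to show that the minimal counterexample $G$ is \emph{cubic} (a degree-$2$ vertex is reducible by deleting it and, if necessary, adding an edge between its neighbours), after which the entire discharging argument is face-based: with charges $2d(v)-6$ on vertices and $\ell(f)-6$ on faces, every vertex automatically has charge $0$ and only faces of length at most $5$ need to receive charge. Your plan instead centres the discharging on compensating degree-$2$ vertices, which do not exist in the minimal counterexample; this is not fatal, but it suggests the reduction that actually shapes the argument has not been carried out.

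The genuine gap is that the whole difficulty of the theorem lives inside your ``Step 1,'' and you have not produced the configuration catalogue, verified the reducibility of any configuration, or checked that the catalogue is strong enough to close the discharging. The paper needs, among other things: no cut-edge and no $2$-cut; no triangle sharing an edge with a $3$-, $4$-, $5$-, or $6$-cycle; no two triangles joined by an edge or by a path of length two; no $4$-cycle adjacent to a $4$-, $5$-, or $6$-cycle; and a further dozen configurations involving $5$- and $7$-cycles whose reducibility is only established by computer search over all colourings of the pendant vertices. These facts are exactly what guarantee that every $3$-face and $4$-face is surrounded only by faces of length at least $7$, that a $5$-face has at most one short neighbour, and that a face of length $\ell\ge 7$ is adjacent to few enough short faces to afford the donations $1$, $\tfrac12$, $\tfrac14$ to adjacent $3$-, $4$-, $5$-faces. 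Your general counting heuristic (``the number of forbidden colors is strictly less than $6$'') fails for most of these configurations: a vertex on a triangle inside a cubic graph typically sees all five $2$-colors in $N^2[v]$, and the extensions require global recolourings, colour permutations within components, and SDR/Hall-type arguments (as in the paper's Lemma on the contracted triangle, where one must pin down the \emph{unique} bad colouring pattern of the boundary before the triangle can be handled). Without exhibiting the configurations and these extension arguments, the proposal is a restatement of the method rather than a proof, and you correctly identify this yourself.
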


Our result is the best possible since the Petersen graph is non-planar and is not packing $(1,2^5)$-colorable. To see this, we know the independence number of the Petersen graph is four and its diameter is two. Therefore, at most four vertices can be colored by the $1$-color and each of the remaining at least six vertices must receive a distinct $2$-color, which is impossible. 

Furthermore, we give an infinite family of non-packing-$(1,2^4)$-colorable subcubic planar graphs in the following example. 
This shows the sharpness of our result in another sense.

\begin{figure}
\begin{center}
  \includegraphics[scale=0.66]{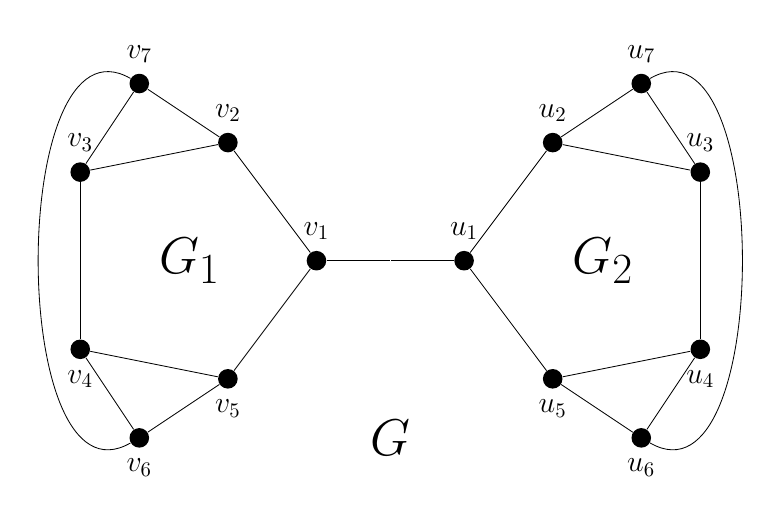}
  \includegraphics[scale=0.66]{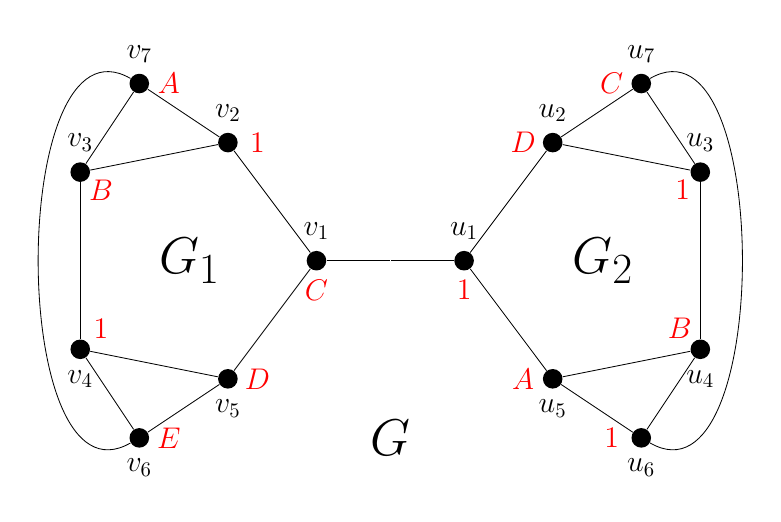}
  \vspace{-8mm}
\caption{A non-packing-$(1,2^4)$-colorable subcubic planar graph.}\label{example}
\end{center}
\vspace{-8mm}
\end{figure}

\begin{example}
Let $G$ be the graph in Figure~\ref{example}, $G_1$ be the subgraph of $G$ induced by the vertices $\{v_1, \ldots, v_7\}$, and $G_2$ be the subgraph of $G$ induced by the vertices $\{u_1, \ldots, u_7\}$. Obviously, the graph $G$ is subcubic and planar. We provide a packing $(1,2^5)$-coloring of $G$ using the colors $1,A,B,C,D,E$, where $1$ is the $1$-color and each of $A,B,C,D,E$ is a $2$-color (see Figure~\ref{example} right picture). We show $G$ is not packing $(1,2^4)$-colorable.
Thus, the disjoint union of $G$'s is an infinite family of non-packing-$(1,2^4)$-colorable subcubic planar graphs.  
\end{example}

\begin{proof}
Suppose to the contrary that $G$ has a packing $(1,2^4)$-coloring using colors $1,A,B,C,D$, where $1$ is the $1$-color and each of $A,B,C,D$ is a $2$-color. We show that $v_1 \in G_1$ must be colored with $1$. Suppose not, say $v_1$ is colored with $A$. Note that $G_1$ has diameter two and thus each of the colors $A,B,C,D$ can be used at most once. Since $v_2v_3v_7$ and $v_4v_5v_6$ are two triangles, the color $1$ can be used at most once in each of the triangles. However, we have at least four uncolored vertices and each of them requires a distinct color from $A,B,C,D$, which is a contradiction to the fact that $G_1$ has diameter two. Similarly, we can show $u_1 \in G_2$ must be colored with $1$. This is a contradiction since $u_1v_1 \in E(G)$.  
\end{proof}

\vspace{-5mm}
\section{Reducible configurations}

It suffices to show Theorem~\ref{planar} for connected graphs since otherwise we can apply the argument to each component. We use {\em good coloring} to denote packing $(1,2^5)$-coloring. In a good coloring, we use colors $1,A,B,C,D,E$, where $1$ is the $1$-color and each of $A,B,C,D,E$ is a distinct $2$-color.  Suppose that Theorem~\ref{planar} is false, i.e., there are subcubic planar graphs that are not packing $(1,2^5)$-colorable. 
Let $G$ be a counterexample with smallest $|V(G)|$. Our plan is first to show that $G$ must be a cubic graph and cannot contain configurations in Figure~\ref{configurations}. Then we use the discharging method to redistribute charges to show that $G$ is too dense to be a planar graph. 

We prove the non-existence of configurations by extending partial good colorings (its existence is guaranteed by the minimality of $G$ after the deletion of each configuration) to $G$. Furthermore, we provide the details of the proof of the non-existence of configurations 
``$3|3$'',``$3$-$3$'',``$3$--$3$'',``$3|4$'',``$3$-$4$'',``$4|4$'',``$3|5$'',``$3|6$'', ``$4|5$'', and``$4|6$'' in the main text. We present the proof of the configuration ``$3$-$5$-$3$'' in the Appendix due to its length.

For other configurations, we group them into two classes. Class One contains the configurations where a $7$-cycle shares an edge with some other faces. It includes configurations ``$3|7|4$'', ``$3|7|5$-I'',``$3|7|5$-II'',``$7|4|4|4$'',``$7|4|5|4$'', ``$7|4|4|5$'', and ``$7|4|5|5|5$''. Class Two consists of the remaining configurations where a $5$-cycle shares an edge with some other faces. It includes configurations ``$5|5|5$-I'', ``$5|5|5$-II'',``$6|5|5$-I'',``$6|5|5$-II'',``$6|5|6$-I'', and ``$6|5|6$-II''. 
We prove the non-existence of configurations in Class One and Class Two with the help of a computer by checking every partial good coloring can be extended.

We include the C++ program in the Appendix. 
For each configuration, we denote the external neighborhood of all the vertices of the configuration \textit{boundary vertices}. 
The boundary vertices, together with their neighborhood not in the configuration, are called \textit{pendant vertices}.
For each configuration, the program iterates over all possible good colorings of pendant vertices, and check whether each of them can be extended to those vertices in the configuration. 
We may add extra edges between boundary vertices while still guaranteeing it is a cubic planar graph in order to limit the number of good colorings to check and thus improve the efficiency of the program. In order to guarantee the boundary vertices and pendant vertices are different from those in configurations in Class One and Class Two, we use the non-existence of $1$-cut, $2$-cut, and configurations proved in Lemmas~\ref{no3|3}-\ref{no4|6}. All cases cannot be covered were listed in Section 6.4 and were checked by computer.
The total running time is less than a week on a Dell desktop with Intel Core i9-12900 Processor CPU and 32GB RAM.
We also include the proof without a computer for one configuration from each class, i.e., configurations ``$3|7|4$'' and ``$5|5|5$-I'' in the Appendix to illustrate the proof idea. 

Often in our arguments, we will color a few uncolored vertices together according to their available colors. We will then use a formulation of Hall's Theorem to obtain a good coloring via a system of distinct representatives. In such a case, we will say statements such as `We are done by SDR'.



\begin{figure}
\vspace{-20mm}
\begin{center}
\hspace{-15mm}
\includegraphics[scale=0.9]{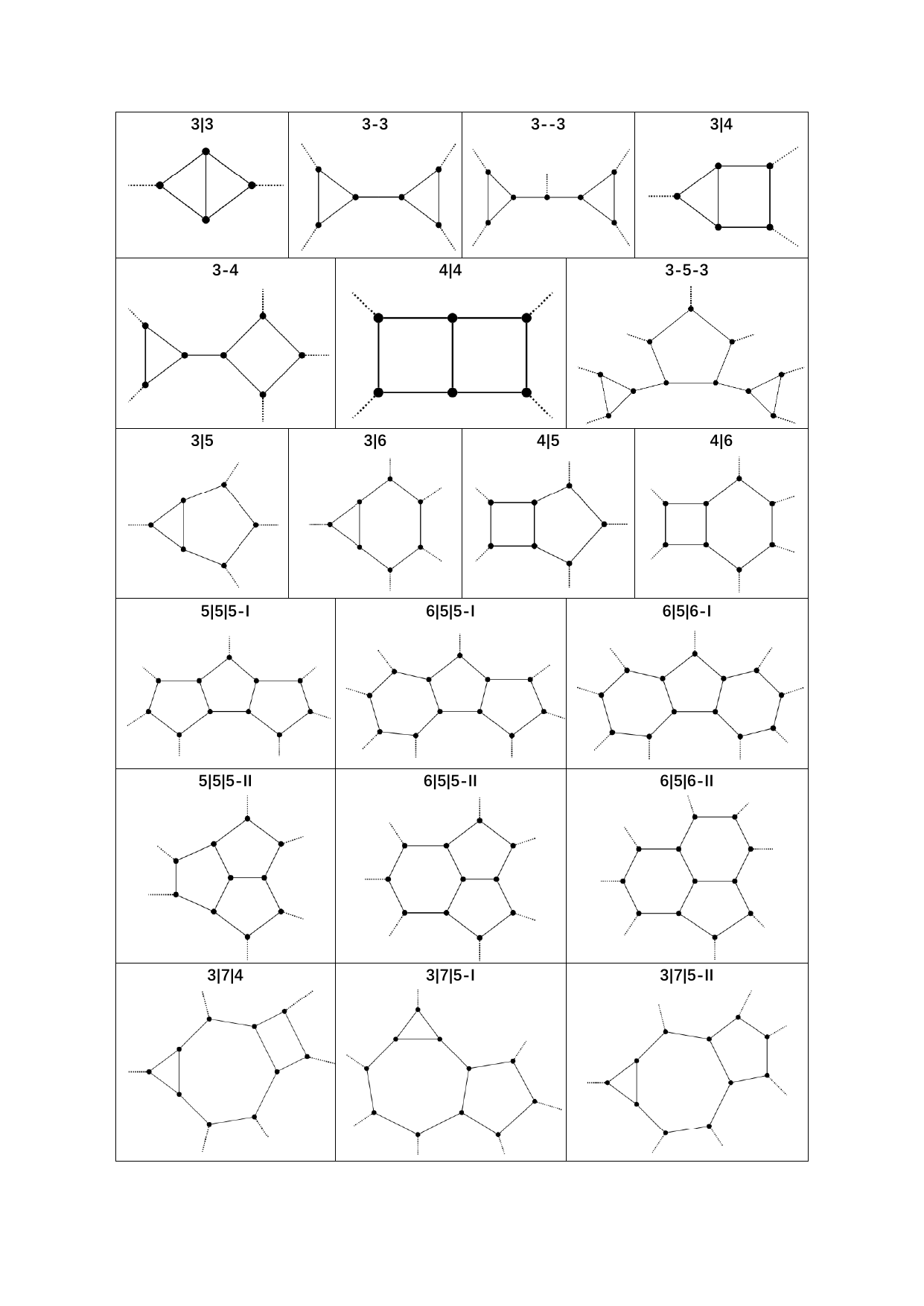}
\end{center}
\end{figure}

\newpage
\begin{figure}
  \vspace{-25mm}
  \hspace{-15mm}
  \includegraphics[scale=0.9]{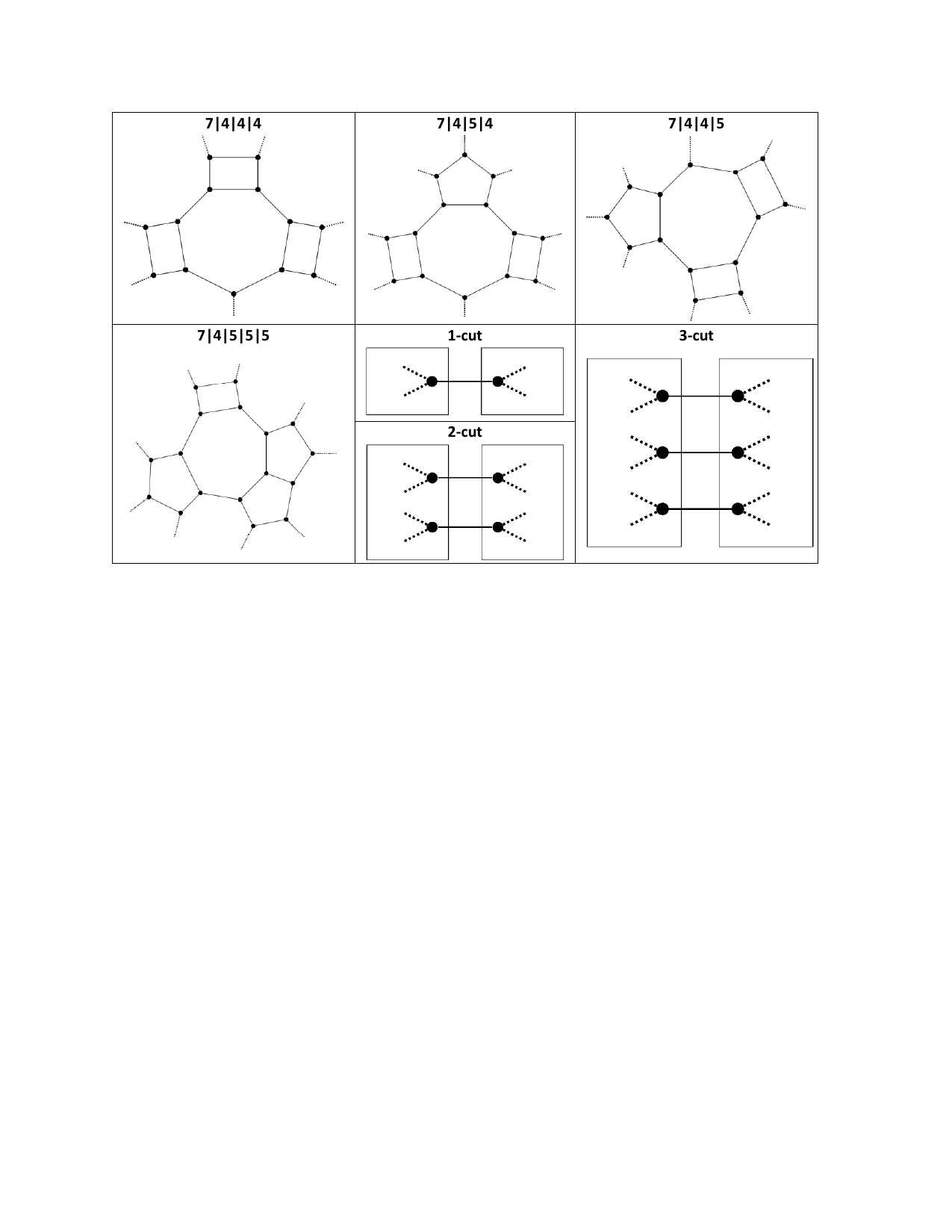}
  \vspace{-140mm}
  \caption{Configurations.}\label{configurations}
  \vspace{-6mm}
\end{figure}

\begin{lemma}\label{cubic}
$G$ is cubic.
\end{lemma}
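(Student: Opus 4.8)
The plan is to show $G$ is cubic by ruling out vertices of degree at most two, using the minimality of $G$ as a smallest counterexample. The strategy is standard for such reducibility arguments: suppose $G$ contains a vertex $v$ of degree $d \le 2$, delete it (or contract/handle it suitably), invoke minimality to obtain a good coloring of the smaller graph $G - v$, and then extend the coloring to $v$, deriving a contradiction to the assumption that $G$ is not packing $(1,2^5)$-colorable.

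First I would handle the easy cases. If $\deg(v) \le 1$, then $v$ has at most one neighbour, so in a good coloring of $G - v$ the vertex $v$ can be assigned the $1$-color provided its unique neighbour is not colored $1$; if the neighbour is colored $1$, we instead use any $2$-color that does not already appear within distance two of $v$. Since $N^2(v)$ is small when $\deg(v)$ is small, the number of forbidden colors is strictly less than the six available colors, so an extension always exists. The degree-zero case is trivial. Next, for $\deg(v) = 2$ with neighbours $x,y$, I would count the colors forbidden at $v$: the $1$-color is forbidden only if $x$ or $y$ uses it, and a given $2$-color is forbidden only if it appears on some vertex in $N^2(v)$. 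The crucial point is to bound $|N^2(v)|$: with $\deg(v)=2$ and $G$ subcubic, each of $x,y$ has at most two further neighbours, so $N^2(v)$ consists of at most $2 + 4 = 6$ vertices, but the $2$-packing condition means each $2$-color is used at most once in any ball of radius two, and careful counting of which of the six colors can simultaneously be blocked should leave at least one color available for $v$.

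The main obstacle, and where I expect the real work to lie, is the degree-two case when the two neighbours and their second neighbourhood are configured so that all six colors are potentially forbidden. A naive count of $|N^2(v)| \le 6$ is not by itself enough, since six vertices could in principle carry all five $2$-colors plus the $1$-color. The resolution is to exploit the packing constraints more tightly: two vertices at distance at most two from each other cannot share the same $2$-color, so among the vertices of $N^2(v)$ the multiset of $2$-colors has limited repetition, but more importantly one must use the flexibility of possibly recoloring a neighbour, or observe that the $1$-color being used at both $x$ and $y$ frees up $2$-color usage. I would therefore argue by first trying to color $v$ with a $2$-color; if every $2$-color is blocked, then all five appear in $N^2(v)$, and combined with the structure this forces enough vertices that one can instead place $v$ in the $1$-class after a local adjustment, or directly reach a contradiction with subcubicity.

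Concretely, I would organize the proof as a short case analysis on $\deg(v) \in \{0,1,2\}$, in each case deleting $v$, applying minimality to color $G - v$, and exhibiting an available color for $v$ by the counting above; the delicate degree-two subcase is dispatched by the refined packing-based count together with, if needed, a swap of the color on a neighbour. Since every vertex then has degree exactly three, $G$ is cubic. I would keep the write-up terse, presenting the forbidden-color count as the engine and flagging the tight degree-two configuration as the only case requiring more than a one-line justification.
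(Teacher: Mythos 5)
Your overall strategy is the paper's: delete the low-degree vertex, invoke minimality, and extend the coloring by counting forbidden colors, with a local recoloring in the tight case. However, there is a genuine gap at the very first step of the degree-two case. You propose to take a good coloring of $G-v$ and extend it to $v$, but a good coloring of $G-v$ need not even be a valid partial good coloring of $G$: the two neighbours $x$ and $y$ are at distance two in $G$ (through $v$), while in $G-v$ they may be at distance three or more, so the coloring of $G-v$ may assign them the same $2$-color. In that event no assignment to $v$ repairs anything, and your forbidden-color count never detects the problem. The paper avoids this by forming $G'$ from $G-v$ by \emph{adding the edge} $xy$ (when it is not already present) before applying minimality, which forces $f(x)\neq f(y)$ and also forbids both from receiving the $1$-color; that added edge is the missing idea in your write-up, and without it (or an argument that one of $x,y$ can always be recolored) the proof does not go through.

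The second issue is that the tight configuration is only gestured at. The genuinely blocked case is $f(x)=1$, $f(y)=A$, and the four outer neighbours carrying $B,C,D,E$. Your suggested fix is to ``place $v$ in the $1$-class after a local adjustment,'' which would require recoloring $x$ away from $1$ to some $2$-color; that is not guaranteed to be possible, since the available $2$-colors at $x$ depend on vertices at distance two from $x$ that you have no control over. The adjustment that actually works is the opposite one: recolor $y$ to $1$ (legal because in this case both of $y$'s other neighbours carry $2$-colors) and then give $v$ the freed colour $A$, which by assumption appears nowhere else in $N^2(v)$. So the recoloring tool you name is the right kind of tool, but the specific move you propose is the one that can fail, and the case is not actually closed as written.
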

\begin{proof}
Suppose not, let $u$ be a degree two vertex with neighbours $u_1, u_2$. We delete $u$ and add the edge $u_1u_2$ if it is not in $G$ to obtain a subcubic planar graph $G'$ with one vertex less than $G$ and thus has a good coloring $f$ by our assumption. We extend $f$ to $G$.

 \textbf{Case 1:} $u_1u_2 \in E(G)$.  We may assume both $u_1$ and $u_2$ have degree three. Let $N(u_1) = \{u, u_2, u_3\}$ and $N(u_2) = \{u, u_1, u_4\}$. Then color $u$ with a color $x \in \{1, A, B, C, D, E\} - \{f(u_1), f(u_2), f(u_3), f(u_4)\}$.

 \textbf{Case 2:} $u_1u_2 \notin E(G)$. We may assume both $u_1$ and $u_2$ have degree three. Let $N(u_1) = \{u, u_3, u_4\}$ and $N(u_2) = \{u, u_5, u_6\}$.

 \textbf{Case 2.1:} $1 \in \{f(u_1), f(u_2)\}$. Say $f(u_1) = 1$ and $f(u_2) = A$. We assume $\{f(u_3), f(u_4), f(u_5), f(u_6)\} = \{B, C, D, E\}$ since otherwise we can color $u$ with a color $x \in \{B, C, D, E\} - \{f(u_3), f(u_4), f(u_5), f(u_6)\}$ to obtain a  good coloring of $G$. Then we recolor $u_2$ with $1$ and color $u$ with $A$.

 \textbf{Case 2.2:} $1 \notin \{f(u_1), f(u_2)\}$. Say $f(u_1) = A$ and $f(u_2) = B$. Then we color $u$ with $1$.
\end{proof}

\begin{lemma}\label{degreetwofree}
Let $u$ be a $2$-vertex in $G$ and $f$ be a good coloring of $G$ such that $f(u) = 1$. Let $N(u) = \{u_1, u_2\}$. We show $u$ can always be recolored to a color in $\{A,B,C,D,E\}$, and only the colors of $u_1$ or $u_2$ may be also recolored.  
\end{lemma}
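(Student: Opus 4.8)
The plan is to split into two cases according to whether some $2$-color is already free on the ball $N^2(u)$. Since $f(u)=1$ and $u_1,u_2$ are neighbours of $u$, both carry $2$-colors; moreover $u_1$ and $u_2$ are within distance two of each other, so $f(u_1)\neq f(u_2)$. Write $f(u_1)=A$ and $f(u_2)=B$, and let $T$ be the set of vertices at distance exactly two from $u$, so that $N^2(u)=\{u_1,u_2\}\cup T$ and $|T|\le 4$ (each of $u_1,u_2$ has at most two neighbours besides $u$). If some $2$-color $X$ does not occur on $N^2(u)$, I simply recolor $u$ with $X$; this changes no neighbour and is clearly valid. So all the content is in the opposite case, where all five $2$-colors occur on $N^2(u)$.

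In that case the three $2$-colors other than $A,B$ -- call them $C,D,E$ -- cannot sit on $u_1$ or $u_2$, hence each appears on a distinct vertex of $T$, so $|T|\ge 3$. The move I would use is to send one of $u_1,u_2$ to color $1$ and give $u$ that vertex's former $2$-color. Recoloring $u_1$ to $1$ is legal exactly when no neighbour of $u_1$ other than $u$ carries color $1$, and afterwards $u$ may take $A$ provided $A$ does not reappear on $T$ (it cannot appear on $N(u_1)\setminus\{u\}$, being at distance one from the old $u_1$, nor on $u_2=B$). Symmetric conditions govern recoloring $u_2$ and giving $u$ the color $B$. The heart of the argument is then a short count on $T$: since three vertices of $T$ already carry the distinct colors $C,D,E$ and $|T|\le 4$, at most one vertex of $T$ carries color $1$, and $A,B$ cannot both reappear on $T$ (that would force five distinct colors onto at most four vertices). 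From these two facts I would show at least one move is available: if neither $A$ nor $B$ reappears on $T$, pick whichever neighbour has no color-$1$ vertex among its other neighbours; and if, say, $A$ reappears on $T$, then $f(T)=\{A,C,D,E\}$ carries no color $1$ at all, so both neighbours are free to be sent to $1$ and $u$ can take $B$.

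The step I expect to require the most care is the interaction of this counting with the degenerate local configurations, namely when $u_1u_2\in E(G)$ (a triangle at $u$) or when $u_1,u_2$ share a neighbour, since these shrink $T$ and could, a priori, place a color-$1$ vertex in the other-neighbourhoods of both $u_1$ and $u_2$ simultaneously, blocking both moves. I would rule this out with the same size bound: a common neighbour of $u_1$ and $u_2$ forces $|T|\le 3$, whereas the only situation producing a color-$1$ vertex in $T$ has $|T|=4$; hence the unique color-$1$ vertex of $T$ lies in the other-neighbourhood of exactly one of $u_1,u_2$, leaving the other free to be recolored to $1$. Checking that each proposed recoloring keeps the coloring good -- that moving a single vertex to color $1$ and $u$ to a freed $2$-color violates no independence or distance-$3$ constraint elsewhere -- is then routine, since only $u$ and one of its neighbours change.
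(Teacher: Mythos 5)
Your proposal is correct and follows essentially the same strategy as the paper: recolor $u$ directly if some $2$-color is absent from $N^2(u)$, and otherwise swap $u$ with whichever of $u_1,u_2$ is permitted, using a pigeonhole count on $N^2(u)$ (at most one color-$1$ vertex at distance two, and $A,B$ not both repeated there) to show one of the two swaps always works. The paper phrases the same count as a case analysis on whether $u_1u_2$ is an edge or $u_1,u_2$ share a neighbour, but the underlying argument is identical.
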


\begin{proof}
We may assume $u_1u_2 \notin E(G)$ since otherwise there are at most $4$ vertices in $N^2(u)$. We can always recolor $u$ with a color in $\{A,B,C,D,E\}$. We may also assume $u_1$ and $u_2$ have no common neighbour except $u$. Otherwise, there are at most $5$ vertices in $N^2(u)$. We can recolor $u$ with a color in $\{A,B,C,D,E\}$ unless they all appear in $N^2(u)$. However, we can switch the colors of $u$ and $u_1$ to obtain the desired coloring. Let $N(u_1) = \{u,u_3, u_4\}$ and $N(u_2) = \{u, u_5, u_6\}$. Then we know each of $A,B,C,D,E$ must appear in $N^2(u)$. Without loss of generality, we assume $f(u_1), f(u_2), f(u_3), f(u_4), f(u_5) = A,B,C,D,E$. If $f(u_6) \neq A$, then we switch the colors of $u$ and $u_1$. Otherwise, we switch the colors of $u$ and $u_2$.
\end{proof}

We first discuss the edge connectivity of $G$. This will be used to prove the non-existence of separating cycles of length up to seven. We first show that there is no cut-edge and therefore no cut-vertex in $G$.

\begin{lemma}\label{nocutedge}
There is no cut-edge in $G$.    
\end{lemma}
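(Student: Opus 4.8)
The plan is to argue by contradiction using the minimality of $G$. Suppose $e = xy$ is a cut-edge. Deleting $e$ disconnects $G$ into two components, say $G_x$ containing $x$ and $G_y$ containing $y$. Since $G$ is cubic by Lemma~\ref{cubic}, both $x$ and $y$ have degree two in their respective components $G_x$ and $G_y$. Each of $G_x$ and $G_y$ has fewer vertices than $G$, so by minimality each admits a good coloring $f_x$ and $f_y$. The difficulty is that the two colorings are entirely independent, so the colors assigned to $x$ in $f_x$ and to $y$ in $f_y$ may conflict across the edge $e$, and moreover vertices near $x$ in $G_x$ and near $y$ in $G_y$ become close to one another once $e$ is reinstated, so the $2$-packing constraints may be violated.

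First I would analyze how reinserting $e$ changes distances. The only new short paths created are those passing through $e$: a vertex $a \in V(G_x)$ and $b \in V(G_y)$ have their $G$-distance equal to $\mathrm{dist}_{G_x}(a,x) + 1 + \mathrm{dist}_{G_y}(y,b)$. Thus a $2$-color used on a vertex within distance one of $x$ in $G_x$ can only conflict with the same $2$-color used within distance one of $y$ in $G_y$. Since $x$ and $y$ have degree two in their components, the relevant ``danger zones'' are small: $N[x]$ in $G_x$ and $N[y]$ in $G_y$, each containing at most three vertices. The key step is then to repair the coloring near the cut-edge. Because $x$ is a $2$-vertex in $G_x$ (and likewise $y$ in $G_y$), I can invoke Lemma~\ref{degreetwofree}: if $f_x(x) = 1$, I may recolor $x$ to some $2$-color while only disturbing its neighbours, and symmetrically for $y$. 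This gives substantial freedom to force $x$ and $y$ to carry compatible colors.

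The main obstacle, and the heart of the argument, will be showing that one can always simultaneously adjust the two local colorings so that (i) $x$ and $y$ receive colors that are consistent across $e$ (if both are $2$-colors they must differ, and no $2$-color may appear at distance $\le 2$ across the cut), and (ii) no $2$-color is repeated between the two small neighborhoods flanking $e$. I would handle this by case analysis on whether $f_x(x)$ and $f_y(y)$ are the $1$-color or a $2$-color. The cleanest route is to first use Lemma~\ref{degreetwofree} to arrange that $x$ carries a $2$-color in $G_x$; since only two $2$-colors appear in $N[x]$ (namely on $x$ and possibly one neighbour, as $x$ has only two neighbours), while five $2$-colors are available, there is ample room to choose the color of $x$ to avoid the at most two $2$-colors appearing in $N[y]$ on the other side, and to keep it distinct from $f_y(y)$. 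A symmetric adjustment on the $y$-side completes the repair. Counting shows the constraints never exhaust the five $2$-colors plus the $1$-color, so a valid extension always exists, contradicting the choice of $G$. Hence no cut-edge can exist.
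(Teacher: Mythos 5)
Your proposal follows essentially the same route as the paper: split $G$ at the cut-edge, color each component by minimality, observe that only the closed neighbourhoods of the two endpoints interact across the reinserted edge, invoke Lemma~\ref{degreetwofree} when an endpoint carries the $1$-color, and exploit the independence of the two colorings (i.e., permuting the five $2$-colors within each component) to make the two sides compatible. The only quibbles are that $N[x]$ can carry up to three distinct $2$-colors rather than two (which does not hurt the count), and that ``choosing the color of $x$'' should be made precise as a global permutation of the color classes in $G_x$ performed simultaneously with the one in $G_y$ --- exactly what the paper's case analysis does by normalizing one side to $A$ with neighbours in $\{1,B,C\}$ and the other to $D$ with neighbours in $\{1,B,C\}$.
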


\begin{proof}
Suppose there is a cut-edge $uv$. Let $G_1$ ($G_2$) be the component containing $u$ ($v$) in $G-uv$. Let $N(u) = \{u_1, u_2, v\}$ and $N(v) = \{u, v_1, v_2\}$. By the minimality of $G$, each of $G_1$ and $G_2$ has a good coloring, say $f_1$ and $f_2$.

\textbf{Case 1:} Both of $f_1(u)$ and $f_2(v)$ are in $\{A,B,C,D,E\}$. We can permute colors so that $f_1(u) = A$ and $f_1(u_1), f_1(u_2) \in \{1, B, C\}$. Similarly, we can permute colors so that $f_2(v) = D$ and $f_2(u_1), f_2(u_2) \in \{1, B, C\}$.

\textbf{Case 2:} $f_1(u) \in \{A,B,C,D,E\}$, say $f_1(u) = A$, and $f_2(v) = 1$. We can permute the colors so that $f(v_1), f(v_2) \in \{B,C\}$ and $f(u_1), f(u_2) \in \{1, B, C\}$.

\textbf{Case 3:} $f_1(u) = f_2(v) = 1$. By Lemma~\ref{degreetwofree}, we recolor $u$ to one of $A,B,C,D,E$. This is done in Case 2.
\end{proof}

By Lemma~\ref{nocutedge}, if there is a $2$-cut $T = \{u_1v_1, u_2v_2\}$, then all vertices $u_1, u_2, v_1, v_2$ are distinct.

\begin{lemma}\label{no2cut}
There is no $2$-cut in $G$.    
\end{lemma}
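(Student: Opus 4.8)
The plan is to argue by contradiction, mimicking the strategy of Lemma~\ref{nocutedge} but now splitting $G$ across a minimal $2$-edge-cut. Suppose $T=\{u_1v_1,u_2v_2\}$ is a $2$-cut; as noted after Lemma~\ref{nocutedge}, the four endpoints are distinct, so I may write $G_1$ for the component of $G-T$ containing $u_1,u_2$ and $G_2$ for the component containing $v_1,v_2$. In $G_1$ both $u_1,u_2$ become $2$-vertices and in $G_2$ both $v_1,v_2$ become $2$-vertices (each loses exactly one incident edge of $T$, since $u_1\ne u_2$ and $v_1\ne v_2$). Set $N_{G_1}(u_1)=\{a_1,a_2\}$, $N_{G_1}(u_2)=\{b_1,b_2\}$, $N_{G_2}(v_1)=\{c_1,c_2\}$, and $N_{G_2}(v_2)=\{d_1,d_2\}$.

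First I would pass to two strictly smaller graphs on which minimality yields good colorings with controlled behaviour at the terminals. Let $G_1'=G_1+u_1u_2$ and $G_2'=G_2+v_1v_2$, omitting an addition if that edge is already present. Each is subcubic, since the terminals are precisely the $2$-vertices created by deleting $T$. Each is planar: because $G$ has no cut-edge (Lemma~\ref{nocutedge}), $T$ is a bond, hence a $2$-cycle in the planar dual, so the two cut edges bound a common face; thus $u_1,u_2$ are cofacial in $G_1$ and $v_1,v_2$ are cofacial in $G_2$, and the new edges may be drawn inside those faces. As $G_2$ and $G_1$ are nonempty, both $G_1'$ and $G_2'$ have fewer vertices than $G$, so each has a good coloring, say $f_1$ and $f_2$. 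The gain is that $u_1,u_2$ are adjacent in $G_1'$ and $v_1,v_2$ in $G_2'$, so $f_1(u_1)\ne f_1(u_2)$ and $f_2(v_1)\ne f_2(v_2)$, and the colors of $a_1,a_2,b_1,b_2$ and of $c_1,c_2,d_1,d_2$ are constrained accordingly.

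Next I would assemble a good coloring of $G$ by keeping $f_1$ on $G_1$ and a recolored copy of $f_2$ on $G_2$. Restricting each coloring to its side stays good, and since every path between $G_1$ and $G_2$ uses a cut edge, the only cross pairs at distance at most $2$ are $(u_1,v_1)$, $(u_1,c_i)$, $(a_i,v_1)$ through $u_1v_1$, and symmetrically $(u_2,v_2)$, $(u_2,d_i)$, $(b_i,v_2)$ through $u_2v_2$, together with a few extra pairs such as $(u_1,v_2)$ only when $d_{G_1}(u_1,u_2)\le 2$ or $d_{G_2}(v_1,v_2)\le 2$. These translate into a bounded list of forbidden color coincidences among $f_1(u_1),f_1(u_2),f_1(a_i),f_1(b_i)$ and $f_2(v_1),f_2(v_2),f_2(c_i),f_2(d_i)$. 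I would resolve all of them by applying a permutation of the five $2$-colors to $f_2$: only a bounded number of (source color, target color) assignments are forbidden, and each color is excluded from only a few targets, so such a permutation exists by Hall's theorem (an SDR). When a terminal carries the $1$-color and this forces a clash no permutation can repair (the case $f_1(u_1)=f_2(v_1)=1$, since $1$ is fixed by every permutation), I would first recolor that terminal, viewed as a $2$-vertex in its own side, to a $2$-color using Lemma~\ref{degreetwofree}, which only possibly changes the color of one of its side-neighbors, and then proceed with the permutation. Producing a good coloring of $G$ contradicts the choice of $G$.

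The hard part is the combination step rather than the reduction. The permutation freedom dispatches the distance-two $2$-color interactions cleanly, but two points need genuine care: the $1$-color collisions at a cut edge, which are invisible to any permutation and must be broken by Lemma~\ref{degreetwofree} while verifying that the induced recoloring of a neighbor does not cascade into the other constraints; and the degenerate configurations in which $u_1u_2$ or $v_1v_2$ already lie at distance at most two (including the case where a virtual edge already exists), which activate the extra cross pairs above and shrink the pool of admissible colors. Checking that a valid permutation still exists in each of these finitely many sub-cases is the crux of the argument.
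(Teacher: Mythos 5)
Your overall strategy coincides with the paper's: split $G$ along the $2$-cut into $G_1$ and $G_2$, pass to $G_1+u_1u_2$ and $G_2+v_1v_2$ (your planarity justification via the bond/dual-cycle observation is a nice touch that the paper leaves implicit), invoke minimality to colour each piece, and recombine by permuting the five $2$-colours on one side, falling back on Lemma~\ref{degreetwofree} when both ends of a cut edge would carry colour $1$. The reduction step is sound.

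The gap is in the recombination. The assertion that a suitable permutation ``exists by Hall's theorem'' because ``each color is excluded from only a few targets'' is not a proof: a single source colour can be forbidden from three or four targets (for instance $f_2(v_1)$ is excluded from $f_1(u_1)$ and from the colours of both $G_1$-neighbours of $u_1$, and potentially from $f_1(u_2)$ as well if the same colour reappears near $v_2$), and whether a permutation avoiding a prescribed set of (source, target) pairs exists depends delicately on how those exclusions overlap. Establishing Hall's condition here requires exploiting the \emph{intra-side} constraints that the virtual edges impose --- e.g.\ that the $G_1$-neighbours of $u_1$ cannot carry $f_1(u_2)$ because they lie at distance two from $u_2$ in $G_1+u_1u_2$. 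This is precisely the device the paper uses: in its Case~3.3 it chooses the permutation so that $f_2(v_1)=f_1(u_2)$ and $f_2(v_2)=f_1(u_1)$, after which every cross-cut distance condition reduces to a condition already guaranteed inside one piece; the remaining cases, where some terminals carry colour $1$, are handled by explicit recolourings whose side effects (including the possible cascading recolour of a neighbour in Lemma~\ref{degreetwofree}) are checked one by one. You never carry out this verification --- indeed you explicitly defer ``checking that a valid permutation still exists in each of these finitely many sub-cases'' and call it the crux. Since that checking is essentially the entire content of the lemma, the proposal as written is a plan rather than a proof.
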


\begin{proof}
Suppose there is a $2$-cut $T = \{u_1v_1, u_2v_2\} $ and $u_1,u_2 \in G_1, v_1, v_2 \in G_2$ are two components of $G - T$. By the minimality of $G$, $G_1$ and $G_2$ have good colorings $f_1,f_2$ respectively. We extend $f_1, f_2$ to a good coloring $f$ of $G$.

\textbf{Case 1:} Both $u_1u_2, v_1v_2 \in E(G)$. Let $N(u_1) = \{u_2, u_3, v_1\}, N(u_2) = \{u_1, u_4, v_2\}, N(v_1) = \{u_1, v_2, v_3\}, N(v_2) = \{u_2, v_1, v_4\}$. If $f_1(u_1) = f_2(v_2) = 1$, then we can permute colors so that $f_1(u_3) = f_2(v_4) = C, f_1(u_2) = A, f_2(v_1) = B, f_2(v_3) \in \{1, A,C\}$, and $f_1(u_4) \in \{1, B, C\}$. If $f_1(u_1) = 1$ and $f_1(u_2), f_2(v_1), f_2(v_2) \in \{A,B,C,D,E\}$, then $f_2(v_4) = 1$ (otherwise we can recolor $v_2$ to $1$, which is already solved) and we can permute colors so that $f_1(u_2) = A, f_1(u_3) = C, f_2(v_1) = B, f_2(v_2) = D, f_1(u_4) \in \{1, B, C\}$, and $f_2(v_3) \in \{1, A\}$. If all of $f_1(u_1),f_1(u_2), f_2(v_1), f_2(v_2) \in \{A,B,C,D,E\}$, then $f_1(u_3) = f_1(u_4) = f_2(v_3) = f_2(v_4) = 1$ (otherwise we can recolor $u_1, u_2, v_1, $ or $v_2$ to $1$, which is already solved) and we can permute colors so that $f_1(u_1) = A,f_1(u_2) = B, f_2(v_1) = C, f_2(v_2) = D$. Finally, if $f_1(u_1) = f_1(v_1) = 1$, then by Lemma~\ref{degreetwofree} we can recolor $v_1$ with a color in $\{A,B,C,D,E\}$ (possibly the color of $v_2$ or $v_3$ is also recolored) and it is already solved.

\textbf{Case 2:} $u_1u_2 \notin E(G)$ and $v_1v_2 \in E(G)$. Let $N(u_1) = \{u_3, u_4, v_1\},$ $ N(u_2) = \{u_5, u_6, v_2\}, N(v_1) = \{u_1, v_2, v_3\},$ and $ N(v_2) = \{u_2, v_1, v_4\}$. Note that $u_1$ and $u_2$ may have common neighbours. We extend a good coloring $f_1$ of $G_1 + u_1u_2$ and a good coloring $f_2$ of $G_2$ to a good coloring $f$ of $G$. There are two cases up to symmetry.

\textbf{Case 2.1:} $f_1(u_1) = 1$. Then we may assume $f_1(u_2), f_1(u_3), f_1(u_4) = A, B, C$. By Lemma~\ref{degreetwofree}, we may assume $f_2(v_1) \neq 1$. If $f_2(v_2) = 1$, we permute colors so that $f_2(v_1) = D, f_2(v_4) = B$, and $f_2(v_3) \in \{1,A,B\}$. Therefore, we may assume $f_2(v_2) \neq 1$ and $f_2(v_4) = 1$. However, we permute colors so that $f_2(v_1) = D, f_2(v_3) \in \{1, A\}$, and $f_2(v_2) \in \{B,C,E\} - \{f_1(u_5), f_1(u_6)\}$.

\textbf{Case 2.2:} $f_1(u_1), f_1(u_2) \in \{A,B,C,D,E\}$. Say $f_1(u_1) = A, f_1(u_2) = B$. Then we may assume $f_1(u_3) = f_1(u_5) = 1$, since otherwise we can recolor $u_1$ or $u_2$ with $1$, which is solved in Case 2.1. If one of $v_1$ and $v_2$ has color $1$, say $f_2(v_1) = 1$, then permute colors so that $f_1(u_6) = C$, $f_2(v_2) = D, f_2(v_4) \in \{1,E\}$, and $f_2(v_3) \in \{B,E\}$.

\textbf{Case 3:} Both $u_1u_2, v_1v_2 \notin E(G)$. Let $N(u_1) = \{u_3, u_4, v_1\},$ $ N(u_2) = \{u_5, u_6, v_2\}, N(v_1) = \{u_1, v_3, v_4\},$ and $ N(v_2) = \{u_2, v_5, v_6\}$. Note that $u_1, u_2$ may have common neighbours. We extend a good coloring $f_1$ of $G_1 + u_1u_2$ and a good coloring $f_2$ of $G_2+v_1v_2$ to a good coloring $f$ of $G$. There are three cases up to symmetry.

\textbf{Case 3.1:} $f_1(u_1) = f_2(v_2) = 1$. Then we permute colors so that $f_1(u_2) = f_2(v_1) = A$. Note that $f_1(u_3) \neq A$ and $f_1(u_4) \neq A$ since $u_1u_2$ was added to $G_1$ to obtain $f_1$. Similarly, $f_2(v_5) \neq A$ and $f_2(v_6) \neq A$. This case is done.

\textbf{Case 3.2:} $f_1(u_1) = 1$ and $f_1(u_2), f_2(v_1), f_2(v_2) \in \{A,B,C,D,E\}$. We permute colors so that $f_1(u_2) = f_2(v_1) = A$ and $f_2(v_2) = B$. Since $u_1u_2$ was added to $G_1$ to obtain $f_1$, $f_1(u_3) \neq A$ and $f_1(u_4) \neq A$. Similarly, $f_2(v_5) \neq A$ and $f_2(v_6) \neq A$. Obviously, $f_1(u_5) \neq A$ and $f_1(u_6) \neq A$. We permute colors so that $f_1(u_5), f_1(u_6) \in \{1, C, D\}$.

\textbf{Case 3.3:} $f_1(u_1), f_1(u_2), f_2(v_1), f_2(v_2) \in \{A,B,C,D,E\}$. We permute colors so that $f_1(u_2) = f_2(v_1) = A$ and $f_1(u_1) = f_2(v_2) = B$. Since $u_1u_2$ was added to $G_1$ to obtain $f_1$, $f_1(u_3) \neq A$, $f_1(u_4) \neq A$, $f_1(u_5) \neq B$, $f_1(u_6) \neq B$. Similarly, $f_2(v_3) \neq B$, $f_2(v_4) \neq B$, $f_2(v_5) \neq A$, $f_2(v_6) \neq A$. 
\end{proof}

\begin{lemma}\label{no3|3}
Suppose there is a triangle $u_1, u_2, u_3$. Then each $u_i$, $i \in [3]$ has distinct neighbours apart from the triangle. Say $N(u_1) = \{u_2, u_3, u_4\}$, $N(u_2) = \{u_1, u_3, u_5\}$, $N(u_3) = \{u_1, u_2, u_6\}$, then $u_4 \neq u_5$, $u_5 \neq u_6$, and $u_4 \neq u_6$.  Thus, there are no two triangles sharing an edge.
\end{lemma}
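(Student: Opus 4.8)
The plan is to avoid any colour-extension argument entirely and instead derive a contradiction purely from the connectivity information already established: $G$ is cubic (Lemma~\ref{cubic}), has no cut-edge (Lemma~\ref{nocutedge}), and has no $2$-cut (Lemma~\ref{no2cut}). Since $G$ is cubic, each vertex of the triangle $u_1u_2u_3$ has exactly one neighbour outside $\{u_1,u_2,u_3\}$, so $u_4,u_5,u_6$ are well defined and distinct from the triangle vertices by simplicity; the whole content of the lemma is their pairwise distinctness. I would assume for contradiction that two of them coincide and show that this forces a small edge cut.

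By symmetry it suffices to treat the case $u_4=u_5=:t$, so that $t$ is adjacent to both $u_1$ and $u_2$. First I would dispose of the fully degenerate case $t=u_6$: then $t$ is adjacent to all of $u_1,u_2,u_3$, the set $\{u_1,u_2,u_3,t\}$ induces a $K_4$, and since these four vertices already have degree three, connectivity forces $G=K_4$. But $K_4$ is packing $(1,2^5)$-colourable (colour one vertex $1$ and the remaining three with distinct $2$-colours, each class being a singleton and hence a $2$-packing), contradicting the choice of $G$. Hence I may assume $t\neq u_6$ and set $S=\{u_1,u_2,u_3,t\}$.

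Now $u_1$ and $u_2$ have all three of their neighbours inside $S$, while $u_3$ and $t$ each have exactly one neighbour outside $S$, namely $u_6$ and the third neighbour $w$ of $t$; one checks $u_6,w\notin S$ using $t\neq u_6$ (any further incidence would again produce the excluded $K_4$). Thus only the two edges $u_3u_6$ and $tw$ leave $S$. If $u_6\neq w$, these edges have four distinct endpoints and their removal separates the nonempty set $S$ from the rest of $G$, i.e.\ they form a $2$-cut, contradicting Lemma~\ref{no2cut}. If instead $u_6=w$, then $S\cup\{u_6\}$ sends out only the single remaining edge at $u_6$; since a cubic graph has even order while $|S\cup\{u_6\}|=5$, we have $G\neq S\cup\{u_6\}$, so that edge is a cut-edge, contradicting Lemma~\ref{nocutedge}.

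Therefore no two of $u_4,u_5,u_6$ are equal. The final assertion then follows immediately: two triangles sharing the edge $u_1u_2$ would require a vertex other than $u_3$ adjacent to both $u_1$ and $u_2$, which by cubicity would have to be simultaneously $u_4$ and $u_5$, forcing $u_4=u_5$ and contradicting what was just proved. The only delicate point in the whole argument is the degenerate coincidence $u_6=w$, where the putative $2$-cut collapses onto a single vertex and one must instead invoke the cut-edge lemma together with the parity of the order of a cubic graph; every other case is a direct appeal to the forbidden small cuts, so I expect this to be short.
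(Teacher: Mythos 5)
Your proof is correct, and it takes a genuinely different route from the paper. The paper argues by reduction and colour extension: after disposing of the $K_4$ case by an explicit colouring and the case $u_4u_5\in E(G)$ via the cut-edge lemma, it deletes $u_1,u_2,u_3,u_5$, adds the edge $u_4u_7$, takes a good colouring of the smaller graph, and extends it to $G$ by a short case analysis on the colours of $u_4$ and $u_7$. You instead observe that the offending configuration is already impossible for purely structural reasons: the set $S=\{u_1,u_2,u_3,t\}$ emits exactly two edges, so one lands either on a $2$-edge-cut (Lemma~\ref{no2cut}) or, in the degenerate coincidence, on a cut-edge (Lemma~\ref{nocutedge}); your handling of the degenerate case via the even order of a cubic graph is sound, though you could equally just note that the third neighbour of $u_6$ lies outside $S\cup\{u_6\}$. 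Your argument is shorter and in fact shows that the paper's colour-extension step for this lemma is redundant, since the main case it treats ($u_4\neq u_7$ with two distinct boundary edges) is exactly the $2$-cut situation already excluded. The only cost is that your proof leans entirely on Lemmas~\ref{cubic}, \ref{nocutedge}, and~\ref{no2cut} (all of which do precede this lemma, so there is no circularity), whereas the paper's version stays within the colour-extension paradigm used for the remaining reducible configurations; one tiny point worth making explicit is that, because $G$ has no cut-edge, any $2$-edge-cut is automatically minimal and yields exactly two components, which is the form in which Lemma~\ref{no2cut} is stated and proved.
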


\begin{proof}
Suppose not, i.e., there are two triangles sharing an edge. Let $u_1, u_2, u_3$ be a triangle and assume that $u_5 = u_6$. We know $u_4 \neq u_5$ since otherwise we color $u_1, u_2, u_3, u_5$ with $1, A, B, C$. Furthermore, we know $u_4u_5 \notin E(G)$ since otherwise, say $N(u_4) = \{u_1, u_5, u_7\}$, $u_4u_7$ is a $1$-cut, which contradicts Lemma~\ref{nocutedge}. Let $N(u_5) = \{u_2, u_3, u_7\}$ and $N(u_7) = \{u_5, u_8, u_9\}$. We delete $u_1, u_2, u_3, u_5$ and add $u_4u_7$ to obtain a subcubic planar graph $G'$. By the minimality of $G$, $G'$ has a good coloring $f$.


\textbf{Case 1:} $1 \in \{f(u_4), f(u_7) \}$. Say $f(u_7) = 1$ and $f(u_4) = A$. We may assume $B \notin \{f(u_8), f(u_9)\}$. Then we color $u_1, u_2, u_3, u_5$ with $1, C, D, B$.

\textbf{Case 2:} $1 \notin \{f(u_4), f(u_7)\}$. Say $f(u_4) = A$ and $f(u_7) = B$. Color $u_1, u_2, u_3, u_5$ with $1, C, D, 1$.
\end{proof}

\begin{lemma}\label{no3|4}
There is no triangle sharing an edge with a four-cycle.
\end{lemma}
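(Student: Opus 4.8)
The plan is to assume the configuration exists and contradict the minimality of $G$ by a deletion-and-extension argument. Suppose $u_1u_2u_3$ is a triangle whose edge $u_1u_2$ is shared with a $4$-cycle. Since $G$ is cubic (Lemma~\ref{cubic}) and the external neighbours of a triangle are distinct (Lemma~\ref{no3|3}), I set $N(u_1)=\{u_2,u_3,u_4\}$, $N(u_2)=\{u_1,u_3,u_5\}$, $N(u_3)=\{u_1,u_2,u_6\}$ with $u_4,u_5,u_6$ distinct; the $4$-cycle must then be $u_1u_2u_5u_4$, so $u_4u_5\in E(G)$. Write $N(u_4)=\{u_1,u_5,u_7\}$, $N(u_5)=\{u_2,u_4,u_8\}$, and $N(u_6)=\{u_3,u_9,u_{10}\}$. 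I delete the three triangle vertices to obtain $G'=G-\{u_1,u_2,u_3\}$, a smaller subcubic planar graph, which by minimality has a good colouring $f$ (apply this to each component if $G'$ is disconnected). Crucially, $u_4,u_5,u_6$ all have degree two in $G'$, so Lemma~\ref{degreetwofree} is available to recolour any of them that is assigned the $1$-colour. Any coincidences among $u_4,\dots,u_{10}$ only shrink the forbidden sets below, so it suffices to treat the generic case where these vertices are distinct.

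It remains to extend $f$ to $u_1,u_2,u_3$. Being a triangle, they must receive distinct colours, at most one of which is the $1$-colour. Reading off the second neighbourhoods in $G$, a $2$-colour is admissible at $u_1$ unless it lies in $\{f(u_4),f(u_5),f(u_6),f(u_7)\}$, at $u_2$ unless it lies in $\{f(u_4),f(u_5),f(u_6),f(u_8)\}$, and at $u_3$ unless it lies in $\{f(u_4),f(u_5),f(u_6),f(u_9),f(u_{10})\}$; moreover $u_i$ may take the $1$-colour provided its unique coloured neighbour (respectively $u_4$, $u_5$, $u_6$) is not coloured $1$. Hence each of $u_1,u_2$ always has at least one admissible $2$-colour, whereas $u_3$ is the bottleneck: its forbidden list can exhaust all five $2$-colours, in which case $u_3$ is forced onto the $1$-colour — which is legitimate exactly because then $f(u_6)$ is itself a $2$-colour.

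To finish I would cast the extension as a system of distinct representatives for the admissible lists of $u_1,u_2,u_3$ (with the $1$-colour usable by at most one of them) and finish ``by SDR''; in the easy regime this succeeds at once. The main obstacle is the deadlock where $u_3$ is forced to the $1$-colour while $u_1$ and $u_2$ are left competing for a single common $2$-colour — concretely, when $\{f(u_4),f(u_5),f(u_6)\}$ are three distinct $2$-colours and $f(u_7)=f(u_8)$ equals the same remaining colour, so that the lists of $u_1$ and $u_2$ collapse to the same singleton. Here I would break the tie using the degree-two vertices of $G'$: recolour $u_4$ or $u_5$ via Lemma~\ref{degreetwofree}, or swap colours around the $4$-cycle $u_1u_2u_5u_4$, to free a second $2$-colour for one of $u_1,u_2$, after which the SDR exists and $f$ extends to $G$, contradicting the choice of $G$. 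The delicate point will be checking that such a recolouring is always available without creating a new conflict at $u_7$, $u_8$, or elsewhere along the $4$-cycle.
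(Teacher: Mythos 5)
Your overall strategy is the same as the paper's: remove the triangle, take a good coloring of the smaller graph by minimality, and extend it to $u_1,u_2,u_3$ by a case analysis on which boundary vertices carry the $1$-color, escaping the tight case by recoloring a $4$-cycle boundary vertex with $1$. The one structural difference is the reduction itself: the paper does not merely delete the three triangle vertices but adds an auxiliary vertex $u$ joined to the three boundary vertices ($u_4,u_5,u_6$ in your labelling), so that in any good coloring of $G'$ these three are pairwise within distance two; this forces their $2$-colors to be distinct and eliminates your subcases where $f(u_6)$ coincides with $f(u_4)$ or $f(u_5)$ (which, as your list bounds show, are in fact the easy ones). Your plain deletion is also legitimate, and your identification of the essential deadlock --- $f(u_4),f(u_5),f(u_6)$ three distinct $2$-colors with $f(u_7)=f(u_8)$ a fourth, so that the lists of $u_1$ and $u_2$ collapse to the same singleton --- is correct.

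Two points need repair before this closes. First, Lemma~\ref{degreetwofree} goes the wrong way: it converts a $1$-colored degree-two vertex into a $2$-colored one, whereas in your deadlock $u_4$ and $u_5$ already carry $2$-colors. The move that actually works is the opposite, elementary one: the deadlock forces $f(u_7)=f(u_8)$ to be a $2$-color (otherwise $|L(u_1)|\ge 2$), so both $G'$-neighbors $u_5,u_7$ of $u_4$ avoid the $1$-color and you may simply recolor $u_4$ with $1$; this lands you in your case where $f(u_4)=1$, whence $u_2$ takes the $1$-color and $u_1,u_3$ are finished by SDR since $|L(u_1)|\ge 2$ and $|L(u_3)|\ge 1$. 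This is precisely the paper's reduction in its Case~2 (``otherwise we recolor $u_5$ with $1$''). Second, dismissing coincidences among $u_4,\dots,u_{10}$ as ``only shrinking the forbidden sets'' covers the list-size bounds but not the recoloring step: if, say, $u_7=u_8$, then $u_4u_5u_7$ is a triangle hanging off a $2$-cut, and the recoloring analysis changes; the paper excludes such coincidences explicitly via Lemma~\ref{no2cut} (and Lemma~\ref{no3|3}), and you should do the same. With those two fixes your argument is complete.
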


\begin{proof}
Let $u_1, u_2, u_3$ be a triangle and $u_2, u_3, u_4, u_5$ be a four-cycle such that they share the edge $u_2u_3$. By Lemma~\ref{no3|3}, $u_1u_4, u_1u_5 \notin E(G)$. Let $N(u_1) = \{u_2, u_3, u_6\}$, $N(u_4) = \{u_2, u_5, u_7\}$, and $N(u_5) = \{u_3, u_4, u_8\}$. By Lemma~\ref{no2cut}, $|\{u_6, u_7, u_8\}| = 3$.
We delete $u_1, u_2, u_3$ and add a vertex $u$ with the edges $uu_4$, $uu_5$ and $uu_6$ to obtain a subcubic planar graph $G'$. By the minimality of $G$, $G'$ has a good coloring $f$. We extend $f$ to $G$. 

 \textbf{Case 1:} $1 \in \{f(u_4), f(u_5), f(u_6)\}$.

 \textbf{Case 1.1:} $1 \in \{f(u_4), f(u_5)\}$ and $f(u_6) \neq 1$. Say $f(u_4) = 1$. We color $u_1$ with $1$. Then each of $u_2$ and $u_3$ has at least two available colors from $\{A, B, C, D, E\}$ and thus $f$ can be extended to $G$.

 \textbf{Case 1.2:} $1 \notin \{f(u_4), f(u_5)\}$ and $f(u_6) = 1$. Color $u_3$ with $1$. Then $u_1,u_2$ each has at least $1$ and $2$ available color from $\{A, B, C, D, E\}$ respectively. Thus, we can extend $f$ to $G$.

 \textbf{Case 1.3:} $1 \in \{f(u_4), f(u_5)\}$ and $f(u_6) = 1$. Say $f(u_4) = 1$. We color $u_2$ with $1$. Then each of $u_1$ and $u_3$ has at least two available colors from $\{A, B, C, D, E\}$ and thus $f$ can be extended to $G$.

 \textbf{Case 2:} $1 \notin \{f(u_4), f(u_5), f(u_6)\}$. We may assume $f(u_8) = 1$ since otherwise we recolor $u_5$ with $1$ and it is solved in Case 1. Similarly, we may assume $f(u_7) = 1$. We color $u_1$ with $1$. Then each of $u_4$ and $u_5$ has at least two available colors from $\{A, B, C, D, E\}$, and thus we can extend $f$ to $G$.
\end{proof}

Let $u_1u_2u_3$ be a triangle in $G$ if it exists. Say each $u_i$ has its neighbor $v_i$ outside of the triangle and $N(v_i) = \{u_i, v_i', v_i''\}$, where $i \in [3]$. By Lemmas~\ref{no3|3} and~\ref{no3|4}, we know that $|\{v_1, v_2, v_3\}| = 3$ and there is no edge joining two vertices from $\{v_1, v_2, v_3\}$. We obtain $G'$ from $G$ by contracting $u_1,u_2,u_3$ to one vertex $u$. Note that $G'$ is a subcubic planar graph with $|V(G')| < |V(G)|$. By the inductive hypothesis, $G'$ has a good coloring $f$.

\begin{lemma}\label{only-case}
For every good coloring $f$ of $G'$, we must have $f(v_1), f(v_2), f(v_3) \in \{A,B,C,D,E\}$ and $|\{f(v_1), f(v_2),$ $f(v_3)\}| = 3$. Furthermore, $\{f(v_i'), f(v_i'')\} = \{1, x\}$ for each $i \in [3]$ and $x \in \{A,B,C,D,E\} - \{f(v_1), f(v_2), f(v_3)\}$.
\end{lemma}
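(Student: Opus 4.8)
Recall that $G'$ is obtained from $G$ by contracting the triangle $u_1u_2u_3$ to a single vertex $u$ with $N_{G'}(u)=\{v_1,v_2,v_3\}$, and that $f$ is a good coloring of $G'$, which exists by the minimality of $G$. Since contracting the triangle can only shorten distances, every pair of vertices outside the triangle is at least as far apart in $G$ as in $G'$, so the restriction $f_0:=f|_{V(G)\setminus\{u_1,u_2,u_3\}}$ is a valid partial good coloring of $G$. The plan is to prove the contrapositive: if $f$ does \emph{not} have the asserted structure, then $f_0$, after possibly recoloring one of the vertices $v_i',v_i''$, extends to a good coloring of $G$, contradicting that $G$ is a counterexample. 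The only vertices of $G$ within distance $2$ of $u_i$ and outside the triangle are $v_i$ (distance $1$) together with $v_{i-1},v_{i+1},v_i',v_i''$ (distance $2$); any coincidence among these only enlarges the availability lists, so I may assume they are distinct. Writing $L_i$ for the colors available at $u_i$ under $f_0$, I get $1\in L_i$ iff $f(v_i)\neq 1$, and a $2$-color $X\in L_i$ iff $X\notin\{f(v_1),f(v_2),f(v_3),f(v_i'),f(v_i'')\}$. As $u_1,u_2,u_3$ are pairwise adjacent, any extension assigns them three \emph{distinct} colors, one from each $L_i$; hence $f_0$ extends iff $L_1,L_2,L_3$ admit a system of distinct representatives, which by Hall's theorem fails only when some subfamily has a union that is too small.

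\textbf{The $v_i$ are three distinct $2$-colors.} In $G'$ the vertices $v_1,v_2,v_3$ are pairwise at distance exactly $2$ (through $u$), so no two can share a $2$-color; thus the $2$-colored ones among $f(v_1),f(v_2),f(v_3)$ are automatically distinct, and the assertion ``$|\{f(v_1),f(v_2),f(v_3)\}|=3$ and all are $2$-colors'' is equivalent to ``$f(v_i)\neq 1$ for every $i$''. If instead some $f(v_i)=1$, then at most two $2$-colors are common to all three forbidden sets, so the lists $L_i$ are comparatively large; I then check that they admit an SDR, in the few tight sub-cases after recoloring a single second-neighbor $v_j'$ or $v_j''$ to release a blocked complement color. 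Each such case contradicts minimality, so every $f(v_i)$ is a $2$-color.

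\textbf{The structure of the second-neighbors.} Set $T:=\{f(v_1),f(v_2),f(v_3)\}$, three distinct $2$-colors, and let $\{y,z\}=\{A,B,C,D,E\}\setminus T$. Then $L_i\subseteq\{1,y,z\}$ with $1\in L_i$ for all $i$, and $y\in L_i$ (resp.\ $z\in L_i$) iff $y\notin\{f(v_i'),f(v_i'')\}$ (resp.\ $z\notin\{f(v_i'),f(v_i'')\}$). Three such lists fail to have an SDR precisely when their union omits $y$, or omits $z$, or when two of them equal $\{1\}$. In every configuration other than the asserted one, namely $\{f(v_i'),f(v_i'')\}=\{1,x\}$ for a \emph{single} common $x\in\{y,z\}$ and all $i$, I exhibit either a direct SDR or a recoloring of some $v_i'$ or $v_i''$ that frees the missing element of $\{y,z\}$ and yields an SDR, once more contradicting minimality. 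The only coloring surviving all these reductions is exactly the claimed pattern.

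\textbf{Main obstacle.} The crux is the recoloring bookkeeping above: whenever a pair $\{f(v_i'),f(v_i'')\}$ deviates from the uniform shape $\{1,x\}$ (for instance equals $\{y,z\}$, or two indices use different complement colors), I must produce a second-neighbor that can be recolored to restore an SDR, and the delicate point is a second-neighbor that is ``stuck'' because every alternative color is already blocked in its own neighborhood. To rule this out I will use that $G$ is cubic (Lemma~\ref{cubic}) together with the absence of small edge-cuts (Lemmas~\ref{nocutedge} and~\ref{no2cut}) and of the configurations ``$3|3$'' and ``$3|4$'' (Lemmas~\ref{no3|3} and~\ref{no3|4}); these control how the neighborhoods of $v_i'$ and $v_i''$ can meet the triangle and guarantee a free recoloring color unless $f$ already has the asserted form.
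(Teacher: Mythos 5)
Your overall skeleton --- the case analysis on how many of $v_1,v_2,v_3$ receive color $1$, the reduction of extendability to a system of distinct representatives for the lists $L_1,L_2,L_3$, and your characterization of when three such lists fail Hall's condition --- matches the structure of the paper's proof. The genuine gap is in the move you use to escape the tight cases. In both your second and third paragraphs you propose to ``recolor a single second-neighbor $v_j'$ or $v_j''$ to release a blocked complement color,'' and in your final paragraph you concede this is the crux and hope to justify it via Lemmas~\ref{cubic}, \ref{nocutedge}, \ref{no2cut}, \ref{no3|3} and~\ref{no3|4}. This cannot work: whether $v_j'$ may legally receive a new $2$-color depends on the colors appearing in $N^2(v_j')$, which lies outside the configuration and is completely unconstrained by those structural lemmas, so a ``stuck'' second-neighbor genuinely cannot be freed in general.

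The move that actually closes every tight case --- and the only recoloring the paper's proof of this lemma uses --- is to recolor the \emph{first}-neighbor $v_j$ itself to color $1$. This is always legal whenever $1\notin\{f(v_j'),f(v_j'')\}$, since the only remaining neighbor of $v_j$ is the uncolored $u_j$; it strictly increases the number of $v_i$'s colored $1$ and hence reduces to an earlier, easier case, and in the last case it forces $1\in\{f(v_i'),f(v_i'')\}$ for every $i$. Combined with your (correct) observation that an SDR fails only when some $L_i=\{1\}$ or the union of the lists omits $y$ or omits $z$, this single move already pins down $\{f(v_i'),f(v_i'')\}=\{1,x\}$ with a common $x$, with no recoloring of second-neighbors needed. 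As written, your proposal never mentions this recoloring, and the substitute you rely on is unjustified, so the proof is incomplete.
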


\begin{proof}
We show that if the statement does not hold, then we can extend $f$ to a good coloring of $G$.

 \textbf{Case 1:} $f(v_1) = f(v_2) = f(v_3) = 1$. Then each of $u_1, u_2, u_3$ has three available colors from $\{A, B, C, D, E\}$ and thus we can extend $f$ to $u_1, u_2, u_3$.

 \textbf{Case 2:} Two of $v_1, v_2, v_3$ are colored by $1$. Without loss of generality, we assume $v_3$ is colored by a color in $\{A, B, C, D, E\}$. We may assume $1 \in \{f(v_3'), f(v_3'')\}$ since otherwise we recolor $v_3$ with $1$ and it was solved in Case 1. Then we color $u_3$ with $1$ and each of $u_1, u_2$ have at least two available colors from $\{A, B, C, D, E\}$. Thus, we can color $u_1$ and $u_2$. 

 \textbf{Case 3:} One of $v_1, v_2, v_3$ is colored by $1$. Without loss of generality, we assume $f(v_1) = 1$, $f(v_2) = A$, and $f(v_3) = B$. We know $1 \in \{f(v_3'), f(v_3'')\}$ since otherwise we can recolor $v_3$ by $1$ and it was solved in Case 2. We color $u_2$ by $1$. Then $u_1$ has at least one color available from $\{A,B,C,D,E\}$ and $u_3$ has at least two colors available from $\{A,B,C,D,E\}$. Thus, we can extend the coloring to $G$.

 \textbf{Case 4:} None of $v_1, v_2, v_3$ is colored by $1$. By construction of $G'$, we can assume $f(v_1) = A$, $f(v_2) = B$, and $f(v_3) = C$. We also know $1 \in \{f(v_1'), f(v_1'')\}$, $1 \in \{f(v_2'), f(v_2'')\}$, $1 \in \{f(v_3'), f(v_3'')\}$, since otherwise we can recolor one of $v_1, v_2, v_3$ to $1$ and it was solved in Case 3. We know each of $u_1, u_2, u_3$ has at least one color available from $\{A,B,C,D,E\}$. If any two of $u_1,u_2, u_3$ together has two available colors from $\{A,B,C,D,E\}$, then we can color $u_3$ by $1$ and finish the extension greedily. Therefore, their available color must be the same. 
\end{proof}

\begin{remark}\label{neighbor}
In the later proof, although we might use different names to denote the vertices $v_1, v_2, v_3$ due to the structure difference, we may assume the corresponding vertex of $v_1$ is colored by $A$, that of $v_2$ is colored by $B$, and that of $v_3$ is colored by $C$ respectively. Moreover, we may assume the corresponding neighborhood $f(N(v_1)-u_1) = f(N(v_2)-u_2) = f(N(v_3)-u_3) = \{1,D\}$.
\end{remark}

\begin{lemma}\label{no3-3}
There is no two triangles connected by an edge.
\end{lemma}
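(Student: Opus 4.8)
The plan is to reuse the triangle-contraction framework just developed. Suppose for contradiction that $G$ contains two triangles joined by an edge, and let $u_1u_2u_3$ be one of them. Then the neighbour $v_1$ of $u_1$ outside this triangle lies in the second triangle, so its two neighbours $v_1',v_1''$ other than $u_1$ satisfy $v_1'v_1''\in E(G)$. Contract $u_1u_2u_3$ to a single vertex $u$ to get the smaller subcubic planar graph $G'$, which has a good coloring $f$ by minimality. By Lemma~\ref{only-case} and Remark~\ref{neighbor} I may assume $f(v_1)=A$, $f(v_2)=B$, $f(v_3)=C$, and $\{f(v_i'),f(v_i'')\}=\{1,D\}$ for every $i\in[3]$; in particular the edge $v_1'v_1''$ carries the colors $\{1,D\}$.

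The key idea is that the second triangle lets me perturb $f$ only on $\{v_1,v_1',v_1''\}$ so as to recolor $v_1$ with $D$, while leaving $v_2,v_3$ and all of $v_2',v_2'',v_3',v_3''$ untouched. Such a recoloring contradicts Lemma~\ref{only-case}: once $f(v_1)=D$ we have $D\in\{f(v_1),f(v_2),f(v_3)\}$, yet $\{f(v_2'),f(v_2'')\}=\{1,D\}$ is unchanged, so the single ``missing'' color that Lemma~\ref{only-case} forces on the pairs $\{v_i',v_i''\}$ cannot be $D$ — a contradiction that rules out the configuration. Thus it suffices to show the recoloring is always possible. Writing $p,q$ for the neighbours of $v_1',v_1''$ outside the triangle, I first record the constraints that $f$ already imposes: since $v_1=A$ is at distance two from both $p$ and $q$, and since exactly one of $v_1',v_1''$ is colored $D$, the packing conditions give $f(p),f(q)\notin\{A,D\}$, and the only vertices within distance two of $v_1$ that carry $D$ are $v_1'$ or $v_1''$ themselves, which are precisely the vertices I will recolor. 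Hence $v_1$ may safely take $D$, and it remains to recolor the adjacent pair $v_1',v_1''$ (now barred from $D$) by a system of distinct representatives drawn from their available colors in $\{1,A,B,C,E\}$.

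The main work, and the main obstacle, is the availability analysis for $v_1'$ and $v_1''$: each is constrained by its own external neighbour ($p$, respectively $q$) together with that neighbour's further neighbours, so in the tightest cases these second neighbourhoods could a priori exhaust the palette and force both of $v_1',v_1''$ to want color $1$. I would organize this as a short case analysis on the colors at $p,q$ and their neighbours, using at each step that $1$ remains available at whichever of $v_1',v_1''$ is not adjacent to a $1$-colored vertex and that $\{f(p),f(q)\}$ avoids $A$ and $D$, so that one vertex of the pair absorbs $1$ while the other takes a free $2$-color. In the few residual configurations where the pair appears to lock up, I expect to invoke the absence of the small cuts and triangle configurations already eliminated in Lemmas~\ref{nocutedge}--\ref{no3|4} to show such a lock-up cannot arise in $G$. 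Producing the system of distinct representatives in every case (equivalently, ruling out every lock-up) is the crux; the rest is bookkeeping.
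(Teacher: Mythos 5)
Your opening moves coincide with the paper's: contract $u_1u_2u_3$, invoke Lemma~\ref{only-case} and Remark~\ref{neighbor}, and exploit the fact that the two outer neighbours of the triangle vertex $v_1$ form an edge. The divergence, and the gap, is exactly at the step you yourself call the crux. By electing to stay inside $G'$ and contradict Lemma~\ref{only-case}, you leave $u$, $v_2$, $v_3$ coloured, so the only $2$-colour $v_1$ could conceivably move to is $D$ --- the one colour that is guarded by the second triangle itself --- and you are then forced to recolour the $D$-coloured triangle-mate and to find new colours for the adjacent pair $v_1',v_1''$. This recolouring can genuinely lock up. Take $f(v_1')=1$, $f(v_1'')=D$, $f(p)=B$, $f(q)=C$, $f(N(p)\setminus\{v_1'\})=f(N(q)\setminus\{v_1''\})=\{A,E\}$ and $f(u)=1$ (all consistent with a good colouring of $G'$). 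After setting $v_1=D$, each of $v_1',v_1''$ has every $2$-colour blocked: $D$ by $v_1$, one of $B,C$ by its own pendant neighbour and the other by the pendant neighbour of its triangle-mate at distance two, and $A,E$ by the second neighbourhoods of $p$ and $q$; since $v_1'v_1''\in E(G)$ they cannot both take colour $1$. This configuration uses only $p\neq q$ and $pq\notin E(G)$, which is precisely what Lemmas~\ref{nocutedge}--\ref{no3|4} already guarantee, so your hope of excluding the residual lock-ups by appealing to those lemmas cannot succeed: the lock-up is compatible with everything proved before Lemma~\ref{no3-3}.

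The paper's proof avoids this entirely by working in $G$ with $u_1,u_2,u_3$ uncoloured rather than in $G'$. There the only coloured vertices within distance two of the triangle vertex (its $u_4$, your $v_1$) are its two triangle-mates, carrying $\{1,D\}$, and their two pendant neighbours; hence that vertex can always be moved \emph{off} its current colour to some $x$ in the three-element set of remaining $2$-colours minus $\{f(p),f(q)\}$, without touching the triangle-mates at all, and the freed colour is then handed to one of $u_1,u_2,u_3$ in the direct extension. If you want to repair your argument, switch the target of the recolouring from $D$ to such an $x$ and finish by extending to $u_1,u_2,u_3$ greedily, rather than by re-invoking Lemma~\ref{only-case}; the version you propose cannot be completed as stated.
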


\begin{proof}
Let $u_1, u_2, u_3$ be a triangle and $u_4, u_5, u_6$ be another triangle such that $u_3u_4$ is an edge. By Lemma~\ref{no3|3} and~\ref{no3|4}, $u_1u_5, u_1u_6, u_2u_5, u_2u_6 \notin E(G)$. Let $N(u_1) = \{u_2, u_3, u_7\}$, $N(u_2) = \{u_1, u_3, u_8\}$, $N(u_5) = \{u_4, u_6, u_9\}$, and $N(u_6) = \{u_4, u_5, u_{10}\}$. By Lemma~\ref{no3|3} and~\ref{no3|4}, $u_7 \neq u_8$, $u_9 \neq u_{10}$, $u_7u_8 \notin E(G)$, and $u_9u_{10} \notin E(G)$. We contract $u_1, u_2, u_3$ to a single vertex $u$ to obtain a subcubic planar graph $G'$. By the minimality of $G$, $G'$ has a good coloring $f$. We extend $f$ to $G$.

By Lemma~\ref{only-case} and Remark~\ref{neighbor}, we know $1 \notin \{f(u_4), f(u_7), f(u_8)\} $, $f(u_4) = C$, $f(u_7) = A$, $f(u_8) = B$, $f(N(u_7)-u_1) = f(N(u_8) - u_2) = f(N(u_4) - u_3) = \{1,D\}$. We recolor $u_4$ with a color $x \in \{A, B, D, E\} - \{f(u_6), f(u_9), f(u_{10})\}$, and color $u_1, u_2, u_3$ with $1,E,C$, which is a contradiction.
\end{proof}

\begin{lemma}\label{no3|5}
There is no adjacent triangle with a five-cycle.
\end{lemma}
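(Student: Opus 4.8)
The plan is to follow the contraction strategy of Lemma~\ref{no3-3}, but to exploit the vertex that the triangle and the $5$-cycle are forced to share. Write the triangle as $u_1u_2u_3$ and let the $5$-cycle be $u_2u_3u_4u_5u_6$, sharing the edge $u_2u_3$; thus $u_3\sim u_4$, $u_2\sim u_6$, and $u_5$ is the unique vertex adjacent to both $u_4$ and $u_6$. Let $w$ be the third neighbour of $u_1$, and let $a$, $b$, $c$ be the third neighbours of $u_4$, $u_6$, $u_5$ respectively. Before anything else I would use Lemmas~\ref{no3|3} and~\ref{no3|4} to guarantee that $w,u_4,u_5,u_6$ and the vertices $a,b,c$ are genuinely distinct and carry no unexpected edges, and Lemmas~\ref{nocutedge}--\ref{no2cut} to keep the surrounding graph connected enough for the local recolourings below to be legitimate. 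I then contract $u_1u_2u_3$ to a single vertex $u$ to obtain a smaller subcubic planar graph $G'$, which has a good colouring $f$ by minimality of $G$.

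The three neighbours of $u$ are exactly $w,u_4,u_6$, so Lemma~\ref{only-case} and Remark~\ref{neighbor} apply: after permuting colour names I may assume $f(w)=A$, $f(u_4)=B$, $f(u_6)=C$, that each of the external neighbourhoods equals $\{1,D\}$, and that $E$ occurs nowhere within distance two of the triangle. The decisive observation particular to $3|5$ is that $u_5$ is common to $u_4$ and $u_6$: from $\{f(u_5),f(a)\}=\{1,D\}$ and $\{f(u_5),f(b)\}=\{1,D\}$ we get $f(u_5)\in\{1,D\}$, with $f(a)=f(b)=D$ when $f(u_5)=1$ and $f(a)=f(b)=1$ when $f(u_5)=D$.

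In both cases I would recolour $u_5$ so as to destroy the rigid $\{1,D\}$ pattern at $u_4$ and $u_6$ simultaneously. If $f(u_5)=1$, changing $u_5$ to any admissible $2$-colour $y$ turns both $\{f(u_5),f(a)\}$ and $\{f(u_5),f(b)\}$ into $\{y,D\}$, which contains no $1$; the modified colouring is a good colouring of $G'$ that contradicts the conclusion of Lemma~\ref{only-case}, and we are done. If $f(u_5)=D$, recolouring $u_5$ to $1$ makes both neighbourhoods equal to $\{1\}$ (again not of the form $\{1,x\}$ with $x$ a $2$-colour), giving the same contradiction; alternatively one recolours $u_5$ to an admissible $2$-colour $y\neq D$, so that $u_4,u_6$ acquire neighbourhoods $\{1,y\}$ while $w$ keeps $\{1,D\}$, violating the requirement that a single $x$ works for all three. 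One can equally phrase the endgame as a direct extension: once the pattern is broken, the colours available at $u_1,u_2,u_3$ cease to coincide, and the triangle can be coloured (typically $u_1,u_2,u_3\mapsto E,D,1$).

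The main obstacle is ensuring that $u_5$ always admits a recolour of the required kind. Its forbidden colours come from $u_4=B$, $u_6=C$, the contracted vertex $u$ with $f(u)\in\{1,E\}$, and the third neighbour $c$ together with $c$'s own neighbours; in unlucky positions these can simultaneously block $1$, $A$ and $E$, freezing $u_5$. I expect to clear these corner cases by instead recolouring one of $a,b,c$ — the vertices one step further out along the cycle, which have more slack because $E$ is globally free near the triangle — and then re-running the dichotomy, or by extending $f$ to the triangle directly using $E$. Keeping track of which colours remain available at $u_1,u_2,u_3$ after each swap, and checking that the distinctness supplied by the earlier lemmas prevents hidden conflicts, is where the bulk of the careful bookkeeping lies.
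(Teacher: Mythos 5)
You have found the same skeleton as the paper: contract the triangle, invoke Lemma~\ref{only-case} and Remark~\ref{neighbor}, and exploit the fact that $u_5$ lies in both external neighbourhoods, forcing $f(u_5)\in\{1,D\}$ with $f(a)=f(b)$ determined accordingly; and the ``unfrozen'' half of each case (recolour $u_5$ and either re-apply Lemma~\ref{only-case} or extend directly) is exactly what the paper does first. The genuine gap is that you explicitly defer the frozen configurations, and those are where essentially all of the paper's work lies. When $f(u_5)=D$, the paper is forced step by step into $f(c)=1$, $f(N(c))=\{A,D,E\}$, and $f(N(a)-u_4)=f(N(b)-u_6)=\{A,D\}$, and escapes only via the simultaneous recolouring of all three cycle vertices $u_4,u_5,u_6$ (to $E,B,C$ in its labelling) followed by colouring the triangle with $B,D,1$. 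When $f(u_5)=1$, it must first prove $f(N(c)-u_5)=\{B,C\}$ and $f(c)=A$ by ruling out two colour swaps, and then swap the colours of $u_5$ and $c$ while resetting $u_4$ to $1$. Neither of your proposed escapes is carried out, and one of them cannot work as stated: ``extending $f$ to the triangle directly using $E$'' is impossible, because at most one of $u_1,u_2,u_3$ can receive $E$ and at most one can receive $1$, so a third vertex is left with no admissible colour --- this is precisely the obstruction Lemma~\ref{only-case} isolates, and it cannot be bypassed without first breaking the $\{1,D\}$ pattern. ``Recolouring one of $a,b,c$ and re-running the dichotomy'' is the right instinct (the paper's swap of $u_5$ with $c$ is of this type), but it only succeeds after the specific structural facts above have been established.

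A secondary issue is your framing of the recolouring as producing ``a good colouring of $G'$'' that violates Lemma~\ref{only-case}. In $G'$ the contracted vertex $u$ is at distance two from $u_5$ and, as you note, $f(u)\in\{1,E\}$; when $f(u)=E$ the colour $E$ is blocked at $u_5$ \emph{in $G'$} even though it is perfectly usable in $G$, where $u$ does not exist and $u_2,u_3$ are still uncoloured. The paper avoids this by always checking extendability to $G$ directly rather than re-entering $G'$; if you keep your framing you must either also recolour $u$ or argue that the proof of Lemma~\ref{only-case} never consults $f(u)$. In short: correct dichotomy and correct opening moves, but the case analysis that constitutes the actual proof is missing.
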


\begin{proof}
Let $u_1, u_2, u_3$ be a triangle and $u_2, u_3, u_4, u_5, u_6$ be a five cycle such that they share the edge $u_2u_3$. By Lemma~\ref{no3|3} and~\ref{no3|4}, we know $u_1u_6, u_1u_5, u_1u_4 \notin E(G)$. Let $N(u_1) = \{u_2, u_3, u_7\}$, $N(u_4) = \{u_3, u_5, u_8\}$, $N(u_5) = \{u_4, u_6, u_9\}$, and $N(u_6) = \{u_2, u_5, u_{10}\}$. By Lemma~\ref{no3|4}, $u_7 \notin \{u_8, u_{10}\}$. We contract $u_1, u_2, u_3$ to a single vertex $u$ to obtain a subcubic planar graph $G'$. By the minimality of $G$, $G'$ has a good coloring $f$. We extend $f$ to $G$. 

By Lemma~\ref{only-case}, we only need to consider the case when $f(u_7) = A$, $f(u_6) = B$, and $f(u_4) = C$. By Remark~\ref{neighbor}, we can also assume $f(N(u_7)-u_1) = f(N(u_6)-u_2) = f(N(u_4)-u_3) = \{1,D\}$. If $f(u_5) \neq 1$, then $f(u_5) = D$ and $f(u_8) = f(u_{10}) = 1$. We must have $f(u_9) = 1$ since otherwise we can recolor $u_5$ with $1$ and it contradicts our assumption. Then we claim that $f(N(u_9)) = \{A, D, E\}$ since otherwise we can recolor $u_5$ with $A$ or $E$, color $u_1, u_2, u_3$ with $E,1,D$. We next claim $f(N(u_{10})- u_6) = \{A,D\}$. Otherwise, say $A \notin f(N(u_{10})- u_6)$, then we can recolor $u_6$ to $A$ and color $u_1, u_2, u_3$ with $B,1,E$. Thus,  $D \notin f(N(u_{10})- u_6)$ and we can switch the colors of $u_5$ and $u_6$. Then we can color $u_1, u_2, u_3$ with $B,1,E$, which is a contradiction. Similarly, $f(N(u_8)- u_4) = \{A,D\}$.  We recolor $u_4, u_5, u_6$ to $E,B,C$, and color $u_1, u_2, u_3$ with $B,D,1$. This is a contradiction.

Therefore, $f(u_5) = 1$, $f(u_8) = f(u_{10}) = D$. If we can switch the colors of $u_5$ and $u_6$, then we color $u_1, u_2, u_3$ with $B,1,E$; if we can switch the colors of $u_4$ and $u_5$, then we color $u_1, u_2, u_3$ with $C,1,E$. Thus, $f(N(u_9) - u_5) = \{B,C\}$. Moreover, if $f(u_9) \neq A$ then we recolor $u_5$ with $A$, $u_4$ with $1$, and color $u_1, u_2, u_3$ with $C,1,E$.  Thus, $f(u_9) = A$. We switch the colors of $u_5$ and $u_9$, recolor $u_4$ with $1$, and color $u_1, u_2, u_3$ with $C,1,E$ ($u_7 = u_9$ is possible).
\end{proof}

\begin{lemma}\label{no3|6}
There is no adjacent triangle with a six-cycle.
\end{lemma}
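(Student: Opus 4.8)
The plan is to follow the same strategy as the proof of Lemma~\ref{no3|5}. Let $u_1u_2u_3$ be the triangle and let $u_2u_3u_4u_5u_6u_7$ be the six-cycle sharing the edge $u_2u_3$, with $N(u_1)=\{u_2,u_3,u_8\}$, $N(u_4)=\{u_3,u_5,u_9\}$, $N(u_5)=\{u_4,u_6,u_{10}\}$, $N(u_6)=\{u_5,u_7,u_{11}\}$, and $N(u_7)=\{u_2,u_6,u_{12}\}$. By Lemmas~\ref{no3|3}, \ref{no3|4}, and~\ref{no3|5} one checks that $u_1$ is adjacent to none of $u_4,u_5,u_6,u_7$ (an edge such as $u_4u_7$ would create a triangle sharing an edge with a four-cycle), so the external neighbors $u_8,u_7,u_4$ of the triangle are distinct and pairwise non-adjacent. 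Contracting $u_1,u_2,u_3$ to a single vertex $u$ therefore yields a subcubic planar graph $G'$ with fewer vertices, which by minimality has a good coloring $f$. Applying Lemma~\ref{only-case} together with Remark~\ref{neighbor} to $u$, I may assume $f(u_8)=A$, $f(u_7)=B$, $f(u_4)=C$, and $f(N(u_8)-u_1)=f(N(u_7)-u_2)=f(N(u_4)-u_3)=\{1,D\}$; otherwise $f$ already extends to $G$.

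The first real step is a rigidity observation that collapses the case analysis. Since $u_5\in N(u_4)$ and $u_6\in N(u_7)$, the constraints above force $f(u_5),f(u_6)\in\{1,D\}$. As $u_5u_6\in E(G)$, these two vertices can be neither both $1$ (an independent-set colour) nor both $D$ (a $2$-packing colour), so exactly one of them is $1$ and the other is $D$. The configuration is symmetric under the reflection fixing $u_1,u_8$ and swapping $u_2\leftrightarrow u_3$, $u_4\leftrightarrow u_7$, $u_5\leftrightarrow u_6$, $u_9\leftrightarrow u_{12}$, $u_{10}\leftrightarrow u_{11}$, together with the colour swap $B\leftrightarrow C$, which sends good colourings to good colourings. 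Hence I may assume $f(u_5)=1$ and $f(u_6)=D$, which in turn forces $f(u_9)=D$ and $f(u_{12})=1$.

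With the colouring pinned down on $\{u_4,\dots,u_9,u_{12}\}$, a short computation shows each of $u_1,u_2,u_3$ has only $E$ available among the $2$-colours, so a direct extension fails and I must recolour. The default move is to recolour $u_4$ to a colour in $\{A,B\}$ and then set $(u_1,u_2,u_3)=(E,1,C)$; since $D,E$ and $C$ (the latter via the neighbour $u_3$) are barred at $u_4$, this move is valid unless both $A$ and $B$ appear among $u_{10}$ and the two outer neighbours of $u_9$. In the blocked sub-cases I would switch to alternative recolourings: recolour $u_9$ off $D$ so that $u_3$ may take $D$; or recolour $u_5$ to $E$ and $u_4$ to $1$, giving $(u_1,u_2,u_3,u_4,u_5)=(E,1,C,1,E)$ (legal because the two vertices coloured $1$ and the two coloured $E$ lie at distance $2$ and $3$ respectively); or the mirror-image moves through $u_7,u_{11},u_{12}$. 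Each such move either finishes the extension or forces a further restriction on the colours of $u_9,u_{10},u_{11},u_{12}$ and their neighbours.

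I expect the main obstacle to be precisely this last step. Unlike the rigid inner structure, the colours of the outer vertices $u_9,u_{10},u_{11},u_{12}$ are \emph{not} controlled by Lemma~\ref{only-case}, so closing every sub-case requires the same careful bookkeeping that appears at the end of the proof of Lemma~\ref{no3|5}, where one is forced to pin down neighbourhoods such as $f(N(u_9)-u_5)=\{B,C\}$ and then finish with a colour-swap. The symmetry reduction carried out in the second paragraph is what keeps this accounting tractable, since it halves the number of residual cases that must be inspected.
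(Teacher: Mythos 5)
Your setup reproduces the paper's argument exactly: contract the triangle, invoke Lemma~\ref{only-case} and Remark~\ref{neighbor} to force $f(u_8)=A$, $f(u_7)=B$, $f(u_4)=C$ with $\{1,D\}$ on the outer neighbourhoods, use the edge $u_5u_6$ plus the reflection symmetry to pin down $f(u_5)=f(u_{12})=1$, $f(u_6)=f(u_9)=D$, and then observe that each of $u_1,u_2,u_3$ has only $E$ available, so $u_4$ must be recoloured to $A$ or $B$ unless both of those colours occur in $\{f(u_{10})\}\cup f(N(u_9)-u_4)$. All of this is correct and is precisely what the paper does. The gap is the blocked sub-case $\{f(u_{10})\}\cup f(N(u_9)-u_4)\supseteq\{A,B\}$, which you do not resolve: you list three candidate moves (recolour $u_9$ off $D$; recolour $u_5$ to a $2$-colour and $u_4$ to $1$; the mirror moves through $u_7,u_{11},u_{12}$) and assert that each ``either finishes the extension or forces a further restriction,'' but none of these moves is unconditionally available. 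For instance, $u_9$ has a neighbour coloured $1$ and so cannot be moved to $1$, and whether it can take another $2$-colour depends on its second neighbourhood, which nothing so far constrains; likewise whether $u_5$ can take $E$ depends on $f(u_{11})$, $f(u_{10})$ and $f(N(u_{10})-u_5)$. Showing that the simultaneous failure of all these moves leads to a contradiction is the actual content of the final step, and it is missing from the proposal.

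You should also be aware that the paper's own one-line resolution of this sub-case deserves scrutiny before you model your finish on it: the paper concludes that $E\notin f(N^2(u_4))$ and recolours $u_4$ with $E$, then colours $u_3$ with $C$ and $u_2$ with $1$. But $u_1$ and $u_4$ are at distance two (via $u_3$), and once $u_4$ carries $E$ a direct check shows each of $u_1,u_2,u_3$ has only $C$ available among the $2$-colours, so two of the three mutually adjacent triangle vertices would have to receive colour $1$. So your instinct that this last step requires genuine additional bookkeeping (on $u_{10}$, $u_{11}$, $N(u_9)$, $N(u_{10})$ and beyond, in the style of the end of Lemma~\ref{no3|5}) is well founded, but the proposal as written does not supply that work, and the lemma cannot be considered proved without it.
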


\begin{proof}

Let $u_1, u_2, u_3$ be a triangle and $u_2, u_3, u_4, u_5, u_6, u_7$ be a six cycle sharing the edge $u_2u_3$ with the triangle. By Lemma~\ref{no3|3} and~\ref{no3|4}, $u_1u_4, u_1u_5, u_1u_6, u_1u_7, u_4u_7 \notin E(G)$. Let $N(u_1) = \{u_2, u_3, u_8\}$, $N(u_4) = \{u_3, u_5, u_{9}\}$, $N(u_5) = \{u_4, u_6, u_{10}\}$, $N(u_6) = \{u_5, u_7, u_{11}\}$, $N(u_7) = \{u_2, u_6, u_{12}\}$. We delete $u_1, u_2, u_3$ and add a vertex $u$ with the edges $uu_4, uu_7, uu_8$ to obtain a subcubic planar graph $G'$. By the minimality of $G$, $G'$ has a good coloring $f$.

By Lemma~\ref{only-case}, we only need to consider the case when $f(u_8) = A$, $f(u_7) = B$, and $f(u_4) = C$. We may assume $1 \in f(N(u_4))$, $1 \in f(N(u_7))$ and $1 \in f(N(u_8))$, as in Lemma~\ref{no3|5}. Then $u_1$ has at least one available color from $\{A, B, C, D, E\}$ and thus we can color $u_1$ with a color $x \in \{A, B, C, D, E\}$, say $x = E$ and $f(N(u_8)) = \{1, D, E\}$. Since each of $u_2, u_3$ can be colored by $1$, all of the colors of $\{A, B, C, D, E\}$ must be present in $N^2(u_2)$ and $N^2(u_3)$ respectively. Thus, we may assume that $\{f(u_5), f(u_9)\} = \{f(u_6), f(u_{12})\} = \{1, D\}$. By symmetry, we may assume that $f(u_5) = f(u_{12}) = 1$ and $f(u_6) = f(u_9) = D$. If $1 \notin f(N(u_9))$, then we recolor $u_9$ with $1$, a contradiction with our assumption that $\{f(u_5), f(u_9)\} = \{1, D\}$. Moreover, if $B \notin f(N^2(u_4))$, then we recolor $u_4$ with $B$, color $u_3$ with $C$, and $u_2$ with $1$. Thus, we may assume $B \in f(N^2(u_4))$. Similarly, we may assume $A \in f(N^2(u_4))$. Then we know $E \notin f(N^2(u_4))$ and thus can recolor $u_4$ with $E$, color $u_3$ with $C$, and $u_2$ with $1$.
\end{proof}

\begin{lemma}\label{no3--3}
There are no two triangles connected by a path of length two.
\end{lemma}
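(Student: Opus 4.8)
The plan is to reuse the contraction strategy of Lemmas~\ref{no3|5} and~\ref{no3|6}. Label the two triangles $u_1u_2u_3$ and $u_5u_6u_7$, joined by the path $u_3u_4u_5$, and set $N(u_4)=\{u_3,u_5,u_8\}$, $N(u_1)=\{u_2,u_3,v_1\}$, $N(u_2)=\{u_1,u_3,v_2\}$. First I would record the non-adjacencies forced by the earlier lemmas. Using Lemmas~\ref{no3|3}--\ref{no3|5} one checks $v_1,v_2\notin\{u_4,u_5,u_6,u_7,u_8\}$, since each coincidence would make $u_1u_3$ an edge of a triangle sharing it with a $3$-, $4$-, or $5$-cycle (e.g. $v_1=u_8$ gives the $4$-cycle $u_1u_8u_4u_3$ through $u_1u_3$). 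Likewise $u_8\notin\{u_6,u_7\}$ and $u_8$ is nonadjacent to $u_6,u_7$, as otherwise a $4$-cycle would share the edge $u_5u_6$ with the triangle $u_5u_6u_7$, contradicting Lemma~\ref{no3|4}. Lemma~\ref{no2cut} ensures the remaining vertices are distinct, giving the clean local picture I need.

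Next I would contract $u_1u_2u_3$ to a single vertex $u$, obtaining a smaller subcubic planar graph $G'$, which by minimality has a good coloring $f$. Applying Lemma~\ref{only-case} and Remark~\ref{neighbor} to the three feet $v_1,v_2,u_4$ of the contracted triangle, I may assume $f(v_1)=A$, $f(v_2)=B$, $f(u_4)=C$, and that each of $f(N(v_1)-u_1)$, $f(N(v_2)-u_2)$, $f(N(u_4)-u_3)$ equals $\{1,D\}$; in particular $\{f(u_5),f(u_8)\}=\{1,D\}$. The only obstruction to extending $f$ to $G$ is the bad case of Lemma~\ref{only-case}: the colors $A,B,C,D$ all occur within distance two of the triangle, so the single $2$-color available at each of $u_1,u_2,u_3$ is $E$, and we cannot color all three.

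The heart of the argument is to break this deadlock by freeing a fourth color for the triangle. The natural move is to recolor $u_4$ (currently $C$) to $y\in\{A,B\}$ (note $D$ is blocked, being present at $u_5$ or $u_8$, and $E$ would re-block $E$ for the whole triangle), and then color $u_3$ with $C$, one of $u_1,u_2$ with $E$, and the other with $1$. A direct check of the (at most six) vertices in each second neighborhood shows these colors respect all distance-two constraints precisely because $y\notin\{C,E\}$. The colors forbidden for $u_4$ come only from $u_5,u_8$ and their neighbors, so I would split according to $f(u_5)\in\{1,D\}$ and to the pair $\{f(u_6),f(u_7)\}$ on the triangle $u_5u_6u_7$ together with the colors on $N(u_8)-u_4$; in every case where $A$ or $B$ survives as a choice for $u_4$, we are done.

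I expect the main obstacle to be the sub-cases in which the triangle $u_5u_6u_7$ (together with $u_8$'s neighbors) already blocks \emph{both} candidates, for instance when $\{f(u_6),f(u_7)\}=\{A,B\}$, so that no single recoloring of $u_4$ succeeds. There I would first recolor one of $u_6,u_7$ to vacate the needed color near $u_4$ — exploiting that it is a triangle vertex with a small second neighborhood and that $u_8$ is nonadjacent to $u_6,u_7$ — and only afterward recolor $u_4$ and extend to the triangle. Tracking all distance-two conflicts routed through the unknown vertex $u_8$, and excluding further coincidences among the named vertices via the non-adjacencies established in the first step, is the delicate part; this bookkeeping is exactly what the planarity and cubicity, combined with Lemmas~\ref{no3|3}--\ref{no3|5}, are there to close out.
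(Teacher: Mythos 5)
Your reduction is exactly the one the paper uses: contract $u_1u_2u_3$, invoke Lemma~\ref{only-case} and Remark~\ref{neighbor} to get $f(v_1)=A$, $f(v_2)=B$, $f(u_4)=C$, $\{f(u_5),f(u_8)\}=\{1,D\}$, and the deadlock that $E$ is the only $2$-color left for the triangle. Your main move (recolor $u_4$ to $y\in\{A,B\}$, then $u_3\to C$, one of $u_1,u_2\to E$, the other $\to 1$) is correct whenever $y$ is actually available at $u_4$. The difficulty is that the case you defer is the entire content of the lemma, and the fallback you sketch for it does not go through.

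Suppose $\{f(u_6),f(u_7)\}=\{A,B\}$, and let $u_{11},u_{12}$ be the third neighbours of $u_6,u_7$. Since neither neighbour of $u_5$ inside its triangle has color $1$ and $f(u_4)=C$, the case $f(u_5)=D$ would let us recolor $u_5$ with $1$ and violate Lemma~\ref{only-case}; hence $f(u_5)=1$ and $f(u_8)=D$. Now $u_6$ and $u_7$ cannot be recolored to $1$ (adjacent to $u_5$), nor to $C$ (distance two from $u_4$), nor to $E$: Lemma~\ref{only-case} forbids recoloring $u_5$ with $E$, so $E$ must occur in $N^2(u_5)$, and the only slots left for it are $u_{11},u_{12}$, both of which lie in $N^2(u_6)\cap N^2(u_7)$. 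The single remaining candidate is $D$, and nothing you have established prevents $D$ from appearing in both $f(N(u_{11})-u_6)$ and $f(N(u_{12})-u_7)$; in that configuration neither $u_6$ nor $u_7$ admits any in-place recoloring, so ``vacate $A$ or $B$ near $u_4$, then recolor $u_4$ to it'' is stuck. The lemma is still true, but the escape is a different move, and it is the one the paper makes: transfer the color of one of $u_6,u_7$ onto $u_5$ and give that vertex color $1$ (at least one of the two transfers is always legal, since $E\in\{f(u_{11}),f(u_{12})\}$ leaves room for at most one of $A,B$ there), and then recolor $u_4$ with $1$ rather than with a $2$-color; this frees both $C$ and $E$ for $u_1,u_2,u_3$ and the extension finishes as before. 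In short, your endgame must allow $u_4$ to receive $1$ (or $D$), which your proposal explicitly rules out; as written, the critical case is open and the sketched repair fails on describable configurations.
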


\begin{proof}

Let $u_1, u_2, u_3$ be a triangle and $u_5, u_6, u_7$ be another triangle such that $u_3u_4, u_4u_5$ are edges in $G$. By Lemma~\ref{no3|4}, $u_1u_{10}, u_2u_{10} \notin E(G)$. By Lemma~\ref{no3|5}, $u_1u_6, u_1u_7, u_2u_6, u_2u_7 \notin E(G)$. Let $N(u_1) = \{u_2, u_3, u_8\}$, $N(u_2) = \{u_1, u_3, u_9\}$, $N(u_3) = \{u_1, u_2, u_4\}$, $N(u_4) = \{u_3, u_5, u_{10}\}$, $N(u_5) = \{u_4, u_6, u_7\}$, $N(u_6) = \{u_5, u_7, u_{11}\}$, and $N(u_7) = \{u_5, u_6, u_{12}\}$. By Lemma~\ref{no3|6}, $u_8u_{11}, u_8u_{12}, u_9u_{11}, u_9u_{12} \notin E(G)$. We delete $u_1, u_2, u_3$ and add a vertex $u$ with the edges $uu_4, uu_8, uu_9$ to obtain a subcubic planar graph $G'$. By the minimality of $G$, $G'$ has a good coloring $f$. We extend $f$ to $G$.

By Lemma~\ref{no3|5}, we only need to consider the case when $1 \notin \{f(u_8), f(u_9), f(u_{4})\} $.  Say $f(u_4) = A$, $f(u_8) = B$, and $f(u_9) = C$. We may also assume $\{1, D\} \subseteq f(N(u_4))$, $\{1, D\} \subseteq f(N(u_8))$, and $\{1, D\} \subseteq f(N(u_9))$. If $B \notin f(N^2(u_4))$ then we can recolor $u_4$ with $A$, color $u_2$ with $E$, and $u_1$ with $1$. Thus, we may assume $B \in f(N^2(u_4))$. Similarly, we can assume $C \in f(N^2(u_4))$.

 \textbf{Case 1:} $f(u_5) = 1$ and $f(u_{10}) = D$. Then $1 \in \{f(u_6), f(u_7)\}$ since otherwise we can recolor $u_5$ with $1$ and it contradicts our assumption that $\{f(u_5), f(u_{10})\} = \{1, D\}$. We assume $f(u_6) = 1$. If $B \notin f(N^2(u_5))$ then we can recolor $u_5$ with $B$ and it contradicts our assumption. Thus, we may assume $B \in f(N^2(u_5))$. Similarly, we can assume $C, E \in f(N^2(u_5))$. Then $A \notin f(N^2(u_5))$ and we can switch the color of $u_4$ and $u_5$, color $u_2$ with $A$, $u_1$ with $1$, and $u_3$ with $E$.

 \textbf{Case 2:} $f(u_5) = D$ and $f(u_{10}) = 1$. Similarly to Case 1, we may assume $\{A, B, C, E\} \subseteq f(N(u_5))$. Say $f(u_6) = B$, $f(u_7) = C$, $f(u_{11}) = A$, and $f(u_{12}) = E$. Then we switch the colors of $u_5$ and $u_7$, recolor $u_4$ with $1$, color $u_3$ with $A$, $u_2$ with $E$, and $u_1$ with $1$.
\end{proof}

\begin{lemma}\label{no3-4}
There is no triangle and a four-cycle connected by an edge.
\end{lemma}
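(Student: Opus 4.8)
The plan is to reuse the contract-and-apply-Lemma~\ref{only-case} strategy from Lemmas~\ref{no3-3}, \ref{no3|5} and~\ref{no3|6}. Let $u_1u_2u_3$ be the triangle and $u_4u_5u_6u_7$ the four-cycle with connecting edge $u_3u_4$, and fix $N(u_1)=\{u_2,u_3,u_8\}$, $N(u_2)=\{u_1,u_3,u_9\}$, $N(u_4)=\{u_3,u_5,u_7\}$, $N(u_5)=\{u_4,u_6,u_{10}\}$, $N(u_6)=\{u_5,u_7,u_{11}\}$ and $N(u_7)=\{u_4,u_6,u_{12}\}$. First I would apply Lemmas~\ref{no3|3}, \ref{no3|4}, \ref{no3|5} and~\ref{no2cut} to ensure that $u_8,u_9,u_4$ are distinct and pairwise non-adjacent, so that contracting $u_1,u_2,u_3$ to a single vertex $u$ produces a simple subcubic planar graph $G'$ on fewer vertices in which $N(u)=\{u_8,u_9,u_4\}$ and the four-cycle survives. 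By minimality $G'$ has a good coloring $f$, and Lemma~\ref{only-case} together with Remark~\ref{neighbor} reduces me to the single obstructing pattern $f(u_8)=A$, $f(u_9)=B$, $f(u_4)=C$, with $\{f(u_5),f(u_7)\}=\{1,D\}$ and with $D$ forced into the external neighborhoods of $u_8$ and $u_9$.

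The key reduction is that $A$, $B$ and $D$ are permanently unavailable to the triangle: every one of $u_1,u_2,u_3$ sees $A$ through $u_8$, $B$ through $u_9$, and $D$ through the $\{1,D\}$-neighborhoods of $u_8,u_9$ and through $u_5/u_7$, whereas $E$ can reach the triangle only through $u_4$. Consequently the triangle admits a coloring by $1,C,E$ exactly when the coloring of $G-\{u_1,u_2,u_3\}$ can be modified so that $f(u_4)\in\{A,B,D\}$ (the colors $u_4$ may legally take other than $C$ and $E$). Since the $2$-colors at distance at most two from $u_4$ are $\{D\}\cup\{f(u_6),f(u_{10}),f(u_{12})\}$, I would first recolor $u_4$ to whichever of $A,B$ avoids $\{f(u_6),f(u_{10}),f(u_{12})\}$; this disposes of every case except when both $A$ and $B$ occur there. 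In the latter case I would instead aim for $f(u_4)=D$: the neighbor of $u_4$ coloured $D$ blocks $D$ only by adjacency, and it can usually be shifted to $1$ (legal since the opposite four-cycle vertex is already $1$ and $1$-colors tolerate distance-two repetition), after which $u_4=D$ is admissible whenever $D\notin\{f(u_6),f(u_{10}),f(u_{12})\}$.

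The hard part is the \emph{saturated} case $\{f(u_6),f(u_{10}),f(u_{12})\}=\{A,B,D\}$: assuming $f(u_7)=D$ and $f(u_5)=1$, the four-cycle adjacencies then force $\{f(u_6),f(u_{12})\}=\{A,B\}$ and $f(u_{10})=D$, and none of the single-vertex moves above applies. My plan for this case is a short recoloring chain localized to the four-cycle — for instance push the $1$ off $u_5$ to a color absent from its distance-two set, recolor the pendant $u_{10}$ away from $D$, then set $u_7=1$ and $u_4=D$ — together with the alternative of directly swapping $f(u_4)$ with $f(u_6)$ to land $u_4$ in $\{A,B\}$. Each such move imposes conditions only on the second neighborhood of the four-cycle (the colors of $u_{11}$ and of the outer neighbors of $u_{10}$, $u_{12}$), and I expect the real work to be showing that at least one move always succeeds, most cleanly through a system-of-distinct-representatives argument as used elsewhere in the paper. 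I anticipate this saturated case to be the principal obstacle, with the earlier lemmas invoked once more to dispose of degenerate identifications among the pendant vertices that would otherwise corrupt the distance-two bookkeeping.
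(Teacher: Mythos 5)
Your opening is the paper's opening: contract the triangle, apply Lemma~\ref{only-case} and Remark~\ref{neighbor} to force $f(u_8)=A$, $f(u_9)=B$, $f(u_4)=C$, $\{f(u_5),f(u_7)\}=\{1,D\}$, and observe that the triangle extends (with $u_3$ taking $C$) as soon as $u_4$ can be recoloured to a colour in $\{A,B\}$, which fails only when $A,B\in\{f(u_6),f(u_{10}),f(u_{12})\}$. Up to a permutation of colour names this is exactly the paper's first reduction.

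The gap is in what comes next, and it is not cosmetic. Write $f(u_5)=1$, $f(u_7)=D$. Your plan for the remaining case is to shift the $D$-coloured vertex $u_7$ to colour $1$ and then set $f(u_4)=D$. But $u_7$ can be recoloured to $1$ only if its pendant neighbour $u_{12}$ is not already coloured $1$, and in every configuration that survives we must have $f(u_{12})=1$: if $f(u_{12})\neq 1$, then $u_7$ has no neighbour coloured $1$ (its other neighbours are $u_4=C$ and $u_6$, and $f(u_6)\neq 1$ since $u_6$ is adjacent to $u_5$), so recolouring $u_7$ to $1$ yields a good colouring of $G'$ with both of $u_4$'s non-triangle neighbours coloured $1$, which violates the pattern of Lemma~\ref{only-case} and hence already extends to $G$. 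So your shift never applies in the surviving cases; moreover, with $f(u_{12})=1$ forced, the condition $A,B\in\{f(u_6),f(u_{10}),f(u_{12})\}$ collapses to $\{f(u_6),f(u_{10})\}=\{A,B\}$, so $D$ does not occur in that triple at all. Consequently your ``saturated case'' $\{f(u_6),f(u_{10}),f(u_{12})\}=\{A,B,D\}$ is in fact the easy one (there $f(u_{12})\in\{A,B\}$, so the one-line recolouring of $u_7$ to $1$ finishes it), while the genuinely hard configuration --- $f(u_{12})=1$, $\{f(u_6),f(u_{10})\}=\{A,B\}$, followed by a case split on $f(u_{11})$ and on the colours of $N(u_{10})\setminus\{u_5\}$ and $N(u_{12})\setminus\{u_7\}$ --- is the one the paper spends essentially its entire proof resolving, and your proposal does not engage with it: the recolouring chain you sketch is aimed at a configuration that cannot occur, and the deferred ``real work'' of showing that some swap always succeeds is precisely the content of the lemma.
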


\begin{proof}
Let $u_1, u_2, u_3$ be a triangle and $u_4, u_5, u_6, u_7$ be a four cycle such that $u_3u_4$ is an edge. By Lemma~\ref{no3|4} and~\ref{no3|5}, $u_1u_5, u_1u_6, u_1u_7, u_2u_5, u_2u_6, u_2u_7 \notin E(G)$. Let $N(u_1) = \{u_2, u_3, u_8\}$, $N(u_2) = \{u_1, u_3, u_9\}$, $N(u_5) = \{u_4, u_6, u_{10}\}$, $N(u_6) = \{u_5, u_7, u_{11}\}$, and $N(u_7) = \{u_4, u_6, u_{12}\}$. We delete $u_1, u_2, u_3$ and add a vertex $u$ with the edges $uu_8, uu_9, uu_4$ to obtain a subcubic planar graph $G'$. By the minimality of $G$, $G'$ has a good coloring $f$. We extend $f$ to $G$.

By Lemma~\ref{no3-3}, we only need to consider the case when $1 \notin \{f(u_4), f(u_8), f(u_9)\} $. Say $f(u_4) = A$, $f(u_8) = B$, and $f(u_9) = C$. We may assume $1, D \in \{f(u_5), f(u_7)\}$, $1, D \in f(N(u_8))$ and $1, D \in f(N(u_9))$. By symmetry, say $f(u_5) = 1$ and $f(u_7) = D$. We may assume $\{B, C\} \subseteq f(N^2(u_4))$ since otherwise say $x \in \{B, C\}$ is missing from $f(N^2(u_4))$ then we can recolor $u_4$ with $x$ and color $u_2$ with $A$, $u_3$ with $1$, and $u_1$ with $E$. Note that $1 \in f(N(u_7))$, since otherwise we recolor $u_7$ with $1$ and it contradicts our assumption that $\{f(u_5), f(u_7)\} = \{1, D\}$. By symmetry and $f(u_5) = 1$, we assume that $f(u_{12}) = 1$, $f(u_6) = B$, and $f(u_{10}) = C$. If $E \notin f(N^2(u_7))$, then we recolor $u_7$ with $E$ and it contradicts our assumption that $\{f(u_5), f(u_7)\} = \{1, D\}$. Thus, $E \in f(N^2(u_7))$. Similarly, $C \in f(N^2(u_7))$. Moreover, if $A \notin f(N^2(u_7))$, then we recolor $u_7$ with $A$ and $u_4$ with $D$, color $u_1, u_2, u_3$ with $A, 1, E$. Thus, $A \in f(N^2(u_7))$.

 \textbf{Case 1:} $f(u_{11}) = A$. Then $1 \in f(N(u_{10}) - u_5)$ since otherwise we switch the colors of $u_6$ and $u_{10}$, which contradicts our assumption that $f(u_{5}) = 1$. We may also assume $E \in f(N(u_{10}) - u_5)$ since otherwise we recolor $u_5$ with $E$ and it contradicts our assumption that $f(u_{5}) = 1$. Thus, $f(N(u_{10}) - u_5) = \{1, E\}$ and we switch the colors of $u_5$ and $u_6$, a contradiction.

 \textbf{Case 2:} $f(u_{11}) = E$. Similarly to Case 1, we assume $1 \in f(N(u_{10}) - u_5)$. We may also assume $A \in f(N(u_{10}) - u_5)$ since otherwise we switch the colors of $u_4$ and $u_5$, and it contradicts our assumption that $f(u_{5}) = 1$. Thus, $f(N(u_{10}) - u_5) = \{1, A\}$ and we switch the colors of $u_5$ and $u_6$, a contradiction.

 \textbf{Case 3:} $f(u_{11}) = C$. Similarly to Case 1 and 2, we assume $f(N(u_{10}) - u_5) = \{A, E\}$ and we switch the colors of $u_5$ and $u_{10}$ to reach a contradiction.
\end{proof}

\begin{lemma}\label{no4|4}
There is no adjacent four-cycle with four-cycle.
\end{lemma}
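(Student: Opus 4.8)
The plan is to reduce to a smaller cubic planar graph by deleting the two endpoints of the shared edge and reconnecting the boundary so as to stay cubic and planar. Let $u_1u_2u_3u_4$ and $u_3u_4u_5u_6$ be two four-cycles sharing the edge $u_3u_4$, so that $u_3$ and $u_4$ are the only two vertices all of whose neighbours lie inside the configuration, with $N(u_3)=\{u_2,u_4,u_6\}$ and $N(u_4)=\{u_1,u_3,u_5\}$. Write $N(u_1)=\{u_2,u_4,u_7\}$, $N(u_2)=\{u_1,u_3,u_8\}$, $N(u_5)=\{u_4,u_6,u_9\}$, and $N(u_6)=\{u_3,u_5,u_{10}\}$. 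First I would record, using Lemma~\ref{no3|4}, that neither $u_1u_5$ nor $u_2u_6$ is an edge (each would create a triangle sharing an edge with one of the two four-cycles), and, using Lemmas~\ref{nocutedge} and~\ref{no2cut}, that the boundary vertices are distinct, so that the graph produced below is simple.

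Next I would delete $u_3$ and $u_4$ and add the two edges $u_1u_5$ and $u_2u_6$. In the plane embedding these chords can be drawn on opposite sides of the hole left by $u_3,u_4$ without crossing, so the resulting graph $G'$ is again cubic and planar; since $|V(G')|<|V(G)|$, minimality gives a good colouring $f$ of $G'$. The point of the two added edges is that in $G'$ the vertices $u_1,u_2,u_5,u_6$ form a four-cycle $u_1u_2u_6u_5$ in which \emph{every} pair is within distance two (the opposite pairs $\{u_1,u_6\}$ and $\{u_2,u_5\}$ now have two common neighbours). Hence any two of them carrying a $2$-colour carry distinct ones, and at most one opposite pair can carry the colour $1$. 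This is the structural leverage that keeps the palettes at $u_3,u_4$ from collapsing.

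It then remains to remove the two fake edges and colour $u_3,u_4$ in $G$. Here $u_3$ is forbidden the $2$-colours on $\{u_1,u_2,u_5,u_6,u_8,u_{10}\}$ and may take $1$ once $u_2,u_4,u_6\neq1$, while $u_4$ is forbidden those on $\{u_1,u_2,u_5,u_6,u_7,u_9\}$ and may take $1$ once $u_1,u_3,u_5\neq1$. I would organise the extension by how many of the five $2$-colours survive at $u_3$ and at $u_4$. If at least one of $u_3,u_4$ keeps a free $2$-colour, a short check (assigning that $2$-colour and then placing $1$ or a remaining $2$-colour at the other vertex, which is easy whenever some $u_i$ on the four-cycle already uses $1$) completes the colouring. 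The only genuine obstruction is the case where all four of $u_1,u_2,u_5,u_6$ carry distinct $2$-colours, say $\{A,B,C,D\}$, and the fifth $2$-colour $E$ occurs both in $\{f(u_8),f(u_{10})\}$ and in $\{f(u_7),f(u_9)\}$; then every $2$-colour is blocked at both $u_3$ and $u_4$, and since $u_3u_4$ is an edge they cannot both be coloured $1$.

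I would resolve this remaining case by liberating a $2$-colour through a local recolouring: recolour a boundary vertex such as $u_1$ to $1$ (legitimate once its outer neighbour $u_7\neq1$), which frees its old $2$-colour for $u_4$ provided that colour does not reappear on $\{u_7,u_9\}$, after which $u_3$ takes $1$; if the first attempt is blocked one repeats the move at $u_2,u_5$, or $u_6$, or instead swaps a boundary vertex with its outer neighbour. The main obstacle I expect is exactly this endgame---showing that, across the few ways $E$ can sit on the outer neighbours, at least one such recolouring is always available. I anticipate this reduces to a short but careful case check over a handful of symmetric subcases, leaning on the distinctness of the four boundary colours forced by the two added edges and on the freedom to recolour an outer neighbour when a boundary vertex must be pushed to $1$.
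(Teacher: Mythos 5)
Your reduction is genuinely different from the paper's: you delete only the two endpoints of the shared edge and add the two chords $u_1u_5$ and $u_2u_6$, whereas the paper deletes all four vertices of one of the four-cycles and adds the single edge $u_7u_8$, so that four vertices remain to be coloured and one can first normalise the colouring (e.g.\ ``we may assume $f(u_9)=f(u_{10})=1$, else recolour $u_5$ or $u_6$ to $1$ and fall into an earlier case'') before finishing greedily or by SDR. Your version of the reduction is sound as far as it goes: the two chords do force the distance-$2$ relations $\{u_1,u_5\}$, $\{u_2,u_6\}$ (and the four boundary vertices pairwise within distance two), planarity and simplicity are fine, and this is exactly the leverage you describe.

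The gap is in the extension step. First, your claim that the \emph{only} obstruction is ``all four of $u_1,u_2,u_5,u_6$ carry distinct $2$-colours and $E$ is blocked on both sides'' is false. If, say, $f(u_1)=1$ and $u_2,u_5,u_6$ carry $A,B,C$, then $u_4$ can never receive colour $1$ (its neighbour $u_1$ already has it), and if $\{f(u_7),f(u_9)\}=\{D,E\}$ then $u_4$ has no colour at all even before $u_3$ is touched; your remark that the extension ``is easy whenever some $u_i$ on the four-cycle already uses $1$'' is exactly backwards in this situation, since the $1$ on $u_1$ is what kills $u_4$. There is a further obstructed configuration when an opposite pair, say $u_1$ and $u_6$, both carry $1$: then neither $u_3$ nor $u_4$ may take $1$, and if $\{f(u_7),f(u_9)\}$ and $\{f(u_8),f(u_{10})\}$ each cover the same two of the three remaining $2$-colours, both shared vertices are reduced to one and the same colour. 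Second, for every obstructed configuration (including the one you did identify) the resolution is only gestured at: ``recolour a boundary vertex to $1$, or swap it with its outer neighbour, and I anticipate a short case check.'' Whether such a recolouring is always available depends on colours at distance two outside the configuration (e.g.\ on $N(u_7)$), which you have not controlled; some of these subcases do go through by pushing $u_5$ or $u_1$ to $1$, but this has to be verified case by case, and that verification is precisely the content of the lemma. As written, the proof is an outline of a plausible strategy with an incorrect enumeration of the hard cases and no proof of the step that carries all the weight.
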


\begin{proof}
Let $u_1, u_2, u_3, u_4$ be a four cycle and $u_3, u_4, u_5, u_6$ be another four cycle such that they share the edge $u_3u_4$. By Lemma~\ref{no3|4}, $u_1u_5, u_2u_6 \notin E(G)$. By Lemma~\ref{no2cut}, $u_1u_6, u_2u_5 \notin E(G)$. Let $N(u_1) = \{u_2, u_4, u_7\}$, $N(u_2) = \{u_1, u_3, u_8\}$, $N(u_5) = \{u_4, u_6, u_9\}$, $N(u_6) = \{u_3, u_5, u_{10}\}$. By Lemma~\ref{no3|4}, $u_7 \neq u_8$ and $u_9 \neq u_{10}$. We delete $u_1, u_2, u_3, u_4$ and add the edge $u_7u_8$ if it is not an edge to obtain a subcubic planar graph $G'$. By the minimality of $G$, $G'$ has a good coloring $f$. We extend $f$ to $G$.

 \textbf{Case 1:} $1 \in \{f(u_7), f(u_8)\}$ and $1 \in \{f(u_5), f(u_6)\}$. By symmetry, we consider two cases $f(u_7) = f(u_6) = 1$ and $f(u_7) = f(u_5) = 1$.

 \textbf{Case 1.1:} $f(u_7) = f(u_6) = 1$. Color $u_2,u_4$ with $1$. There are at least $1,2$ colors available from $\{A, B, C, D, E\}$ at $u_1$ and at $u_3$ respectively, and thus we can color them in this order. This is a contradiction. 
 

 \textbf{Case 1.2:} $f(u_7) = f(u_5) = 1$. Color $u_3$ with $1$. There is at least $1,2,3$ colors from $\{A, B, C, D, E\}$ at $u_2,u_1$, and $u_4$ respectively. Thus, we can extend $f$ to $G$ by coloring $u_2, u_1, u_4$ in this order. This is a contradiction.

 \textbf{Case 2:} $1 \in \{f(u_7), f(u_8)\}$ and $1 \notin \{f(u_5), f(u_6)\}$. Say $f(u_7) = 1$. We may assume that $f(u_9) = f(u_{10}) = 1$ since otherwise we can recolor $u_5$ or $u_6$ with $1$ and it is solved in Case 1. We color $u_2$ and $u_4$ with $1$.  There is at least one color available from $\{A, B, C, D, E\}$ at $u_1$ and at least two colors available from $\{A, B, C, D, E\}$ at $u_3$. Thus, we can extend $f$ to $G$ by coloring $u_1, u_3$ in this order. This is a contradiction. 

 \textbf{Case 3:} $1 \notin \{f(u_7), f(u_8)\}$ and $1 \in \{f(u_5), f(u_6)\}$. Say $f(u_5) = 1$. Then we may assume $1 \in f(N(u_8))$ since otherwise we can recolor $u_8$ with $1$ and it is solved in Case 1. Similarly, we may assume $1 \in f(N(u_7))$. We color $u_1$ and $u_3$ with $1$. There are at least $1,2$ colors available from $\{A, B, C, D, E\}$ at $u_2$ and $u_4$, and we are done by SDR.

 \textbf{Case 4:} $1 \notin \{f(u_7), f(u_8)\}$ and $1 \notin \{f(u_5), f(u_6)\}$. Similarly to Case 2 and 3, we may assume that $f(u_9) = f(u_{10}) = 1$, $1 \in f(N(u_7))$, and $1 \in f(N(u_8))$. We color $u_1$ and $u_3$ with $1$. There are at least $1,2$ available colors from $\{A, B, C, D, E\}$ at $u_2$ and $u_4$. Thus, we can extend $f$ to $G$. This is a contradiction.
\end{proof}

\begin{lemma}\label{no4|5}
There is no adjacent four-cycle with five-cycle.
\end{lemma}

\begin{proof}
Let $u_1, u_2, u_3, u_4$ be a four cycle and $u_3, u_4, u_5, u_6, u_7$ be a five cycle such that they share the edge $u_3u_4$. By Lemma~\ref{no3|4} and~\ref{no4|4}, $u_1u_5, u_1u_6, u_2u_6, u_2u_7 \notin E(G)$. We further claim that $u_1u_7 \notin E(G)$. Suppose not, $u_1u_7 \in E(G)$, then by Lemma~\ref{nocutedge}, $u_2u_5 \notin E(G)$ (otherwise, $u_6$ is an end of a cut edge). This is a contradiction with Lemma~\ref{nocutedge}, since $u_2$ is an end of a cut edge. By symmetry, $u_2u_5 \notin E(G)$. Let $N(u_1) = \{u_2, u_4, u_8\}$, $N(u_2) = \{u_1, u_3, u_9\}$, $N(u_5) = \{u_4, u_6, u_{10}\}$, $N(u_6) = \{u_5, u_7, u_{11}\}$, and $N(u_7) = \{u_3, u_6, u_{12}\}$. By Lemma~\ref{no4|4}, $u_1, u_5$ have no common neighbour except $u_4$. We also claim $u_1,u_6$ have no common neighbour. Suppose not, i.e., $u_1, u_6$ have a common neighbour $u'$. By planarity, Lemma~\ref{no3|4}, and~\ref{no3|5}, no two of $u_2,u_5, u_7, u'$ are adjacent. We have either the $4$-cycle $u_1u_2u_3u_4u_1$ or the $5$-cycle $u_3u_4u_5u_6u_7u_3$ or the $5$-cycle $u_1u_4u_5u_6u'u_1$ is a separating cycle, which leads to either a cut-edge or a $2$-cut. This is a contradiction to either Lemma~\ref{nocutedge} or Lemma~\ref{no2cut}. Furthermore, $u_1,u_7$ have no common neighbour. Suppose not, i.e. $u_1, u_7$ have a common neighbour $u'$. By planarity, Lemmas~\ref{no3|4}, and~\ref{no3|5}, no two of $u_2,u_5, u_7, u'$ are adjacent. We have either the $4$-cycle $u_1u_2u_3u_4u_1$ or the $5$-cycle $u_1u_2u_3u_7u'u_1$ is a separating cycle, which leads to either a cut-edge or a $2$-cut. This is a contradiction to either Lemma~\ref{nocutedge} or Lemma~\ref{no2cut}. By symmetry, $u_2$ and each of $u_5,u_6, u_7$ have no common neighbour. 
We delete $u_1, u_2, u_3, u_4$ to obtain a subcubic planar graph $G'$. By the minimality of $G$, $G'$ has a good coloring $f$. We extend $f$ to $G$.

 \textbf{Case 1:} $f(u_8) = f(u_9) = f(u_5) = f(u_7) = 1$. Then each of $u_1, u_2, u_3, u_4$ has at least three available colors from from $\{A, B, C, D, E\}$. The only case we cannot extend $f$ to $G$ is when the number of available colors for each of $u_1, u_2, u_3, u_4$ from $\{A, B, C, D, E\}$ are the same and there are exactly three of them, say $\{A, B, C\}$ and $f(N(u_8)) = f(N(u_9)) = f(N(u_5)) = f(N(u_7)) = \{C, D\}$. We may assume $f(u_6) = C$ and $f(u_{10}) = f(u_{12}) = D$. If $A \notin f(N(u_{11}))$, then we can recolor $u_6$ with $A$, which contradict our assumption that $f(u_6) = C$. Thus, $A \in f(N[u_{11}])$. Similarly, we may assume that $\{B, E\} \subseteq f(N[u_{11}])$. But then we can recolor $u_{11}$ with $1$ and at least one color $x \in \{A, B, E\}$ is missing from $f(N[u_{11}])$ thus we can recolor $u_6$ with $x$, which contradicts our assumption that $f(u_6) = C$.

 \textbf{Case 2:} One of $f(u_8), f(u_9), f(u_5), f(u_7)$ is in $\{A, B, C, D, E\}$, say $A$. By symmetry, there are two subcases, i.e., $f(u_9) = A$ and $f(u_5) = A$. We only show the proof for the case when $f(u_9) = A$ since the proof for $f(u_5) = A$ is exactly the same. We may assume $1 \in f(N(u_9))$ since otherwise we can recolor $u_9$ with $1$ and it is solved in Case 1. We color $u_2$ with $1$. Then $u_1,u_3,u_4$ has at least $2,2,3$ available colors from $\{A, B, C, D, E\}$ respectively. Thus, we can extend $f$ to $G$.

 \textbf{Case 3:} Two of $f(u_8), f(u_9), f(u_5), f(u_7)$ are in $\{A, B, C, D, E\}$. By symmetry, we have the following $4$ cases.

 \textbf{Case 3.1:} $f(u_5) = f(u_9) = 1$. Then we color $u_1$ and $u_3$ with $1$. There is at least one color available from $\{A, B, C, D, E\}$  at $u_2$ and at least one color available from $\{A, B, C, D, E\}$ at $u_3$. Thus, we may assume the only available color at $u_2$ and $u_4$ is the same, say $E$. By symmetry, we may assume that $f(u_8) = A, f(u_7) = B, f(u_6) = C$, and $f(u_{10}) = D$. We may assume that $f(u_{12}) = 1$ since otherwise we can recolor $u_7$ with $1$ and it is solved in Case 2. We claim that $A \in f(N[u_{11}])$ since otherwise we can recolor $u_6$ with $A$ and it contradicts $f(u_6) = C$. Similarly, we have $E \in f(N[u_{11}])$. We may also assume $\{A, D\} \subseteq f(N^2(u_7))$ since otherwise we can recolor $u_7$ with the missing color and it contradicts $f(u_7) = B$. Let $N(u_{12}) = \{u_7, u_{13}, u_{14}\}$.

If $f(u_{11}) = A$, then we may assume $1 \in f(N(u_{11}))$ since otherwise we can recolor $u_{11}$ with $1$. Since $E \in f(N(u_{11}))$, we know $f(N(u_{11})) = \{1, C, E\}$. Then since $\{A, D\} \subseteq f(N^2(u_7))$, we assume $f(u_{13}) = D$. Moreover, we may assume $f(u_{14}) = C$ since otherwise we can switch the colors of $u_6$ and $u_7$, color $u_2$ with $B$ and $u_4$ with $E$. We may assume $1 \in f(N(u_{13}))$ since otherwise we can switch the colors of $u_{12}$ and $u_{13}$, color $u_7$ with $1$, which is solved in Case 2. We can also assume that $B \in f(N(u_{13})) \cup f(N(u_{14}))$ since otherwise we can recolor $u_{12}$ with $B$ and $u_7$ with $1$ , which is solved in Case 2. Similarly, we can assume that $\{A, E\} \subseteq f(N(u_{13})) \cup f(N(u_{14}))$. Then we recolor $u_{14}$ with $1$, $u_{12}$ with $C$, $u_6$ with $B$, and $u_7$ with $1$, which is solved in Case 2.

If $f(u_{11}) = E$, then we may assume $1 \in f(N(u_{11}))$ since otherwise we can recolor $u_{11}$ with $1$. Since $A \in f(N(u_{11}))$, we know $f(N(u_{11})) = \{1, A, C\}$. Moreover, we know $f(N(u_{12}) = \{A, B, D\})$ and thus can switch the colors of $u_6$ and $u_7$, color $u_2$ with $B$ and $u_4$ with $E$.

If $f(u_{11}) \notin \{A, E\}$, then since $\{A, E\} \subseteq f(N[u_{11}])$ we have $f(N(u_{11})) = \{A, C, E\}$. We can recolor $u_{11}$ with color $1$. We claim $A \in f(N(u_{12}))$ since otherwise we can recolor $u_7$ with $A$ and color $u_2$ with $B$ and $u_4$ with $E$. Similarly, we assume $D \in f(N(u_{12}))$. Then we can switch the colors of $u_7$ and $u_6$,  color $u_2$ with $B$ and $u_4$ with $E$.

 \textbf{Case 3.2:} $f(u_8) = f(u_5) = 1$. We may assume $1 \in f(N(u_9))$ and $1 \in f(N(u_7))$. Then there are at least two colors available from $\{A, B, C, D, E\}$ at each of $u_1, u_2, u_3, u_4$ and $1$ is also available at $u_2$ and $u_3$. Therefore, the number of available colors from $\{A, B, C, D, E\}$ for each of $u_1, u_2, u_3, u_4$ must be two, and the two colors must be the same for them, say $D, E$. We may thus assume $f(u_9) = A$, $f(u_7) = B$, $f(u_{10}) = A$, $f(u_{6}) = C$, and $\{B, C\} \subseteq f(N(u_8))$. We may also assume that $f(u_{12}) = 1$ since otherwise we can recolor $u_7$ with $1$ and it is solved in Case 2. We assume $A \in f(N^2(u_7))$ since otherwise we recolor $u_7$ with $A$ and color $u_1, u_2, u_3, u_4$ with $D, 1, B, E$. Similarly, we assume $\{D, E\} \subseteq f(N^2(u_7))$. We claim $\{D, E\} \subseteq f(N^2(u_6))$ since otherwise, say $D$ is missing, then we can recolor $u_6$ with $D$ and color $u_1, u_2, u_3, u_4$ with $D, 1, C, E$. 

If $f(u_{11}) = A,$ then $f(u_{12}) = \{B, D, E\}$, $f(N(u_{11})) = \{C, D, E\}$, and we can recolor $u_{11}$ with $1$, which is a contradiction with $A \in f(N^2(u_7))$. Thus, we may assume $f(u_{11}) \in \{D, E\}$, say $f(u_{11}) = D$. We may also assume $1 \in f(N(u_{11}))$ since otherwise we can recolor $u_{11}$ with $1$. Thus,  $f(u_{12}) = \{A, B, E\}$ and $f(N(u_{11})) = \{1, C, E\}$. Then we switch the colors of $u_6$ and $u_7$, and color $u_1, u_2, u_3, u_4$ with $D, 1, E, C$.

 \textbf{Case 3.3:} $f(u_8) = f(u_9) = 1$. We may assume $1 \in f(N(u_5))$ and $1 \in f(N(u_7))$. Similarly to Case 3.2, we may thus assume $f(u_5) = C$, $f(u_7) = A$, $\{A, B\} \subseteq f(N(u_8))$, $\{B, C\} \subseteq f(N(u_9))$.

 \textbf{Case 3.3.1:} $f(u_6) = 1$. Then $f(u_{10}) = f(u_{12}) = B$. We may assume $A \in f(N[u_{11}])$ since otherwise we can switch the colors of $u_6, u_7$, color $u_1, u_2, u_3, u_4$ with $D, A, E, 1$. Similarly, we can assume that $C \in f(N[u_{11}])$. Moreover, $\{D, E\} \subseteq f(N[u_{11}])$ since otherwise, say $D$ is missing, we can recolor $u_6$ with $D$, a contradiction with the case. Then $\{1, A, C, D, E\} \subseteq f(N[u_{11}])$, a contradiction.

 \textbf{Case 3.3.2:} $f(u_6) \neq 1$. Then $f(u_{10}) = f(u_{12}) = 1$. We also have $f(u_{11}) = 1$ since otherwise we recolor $u_6$ with $1$ and it contradicts the case. Similarly to Case 3.3.1, we assume $f(N(u_{11})) = \{B, D, E\}$. We also assume $f(N(u_{12})) = \{A, D, E\}$, since otherwise, say $D$ is missing, we recolor $u_7$ with $D$, and color $u_1, u_2, u_3, u_4$ with $D, A, 1, E$. We switch the colors of $u_6$ and $u_7$, and color $u_1, u_2, u_3, u_4$ with $D, A, 1, E$.

 \textbf{Case 3.4:} $f(u_5) = f(u_7) = 1$. We may assume that $1 \in f(N(u_8))$ and $1 \in f(N(u_9))$. Similarly to Case 3.2, we may assume $f(u_8) = A$, $f(u_9) = B$, $\{1, C\} \subseteq f(N(u_8))$, $\{1, C\} \subseteq f(N(u_9))$, $f(u_6) = C$, $f(u_{10}) = B$, and $f(u_{12}) = A$. We assume $\{D, E\} \subseteq f(N[u_{11}])$ since otherwise, say $D$ is missing, we recolor $u_6$ with $D$ and color $u_1, u_2, u_3, u_4$ with $D, 1, C, E$.

 \textbf{Case 3.4.1:} $f(u_{11}) = 1$. Then $1 \in f(N(u_{12}) - u_7)$ since otherwise we can recolor $u_{12}$ with $1$, $u_7$ with $A$, and color $u_1, u_2, u_3, u_4$ with $1, D, 1, E$. We also have $B \in f(N(u_{12}))$ since otherwise we can recolor $u_7$ with $A$, and color $u_1, u_2, u_3, u_4$ with $1, D, 1, E$. Similarly, we assume $f(N(u_{10}) - u_5) = \{1, A\}$. Let $N(u_{11}) = \{u_6, u_{13}, u_{14}\}$. Since $\{D, E\} \subseteq f(N[u_{11}])$, we assume $f(u_{13}) = D$ and $f(u_{14}) = E$. Moreover, we may assume $\{A, B\} \subseteq f(N(u_{13})) \cup f(N(u_{14}))$ since otherwise, say $A$ is missing, then we can recolor $u_{11}$ with $A$ and it contradicts the case. 

If $A, B$ together in one of $f(N(u_{13}))$ and $f(N(u_{14}))$, say $f(N(u_{13})) = \{A, B\}$, then $f(N(u_{14})) = \{1, D\}$ since if $1$ is missing then we switch the colors of $u_{11}$ and $u_{14}$ and if $D$ is missing then we switch the colors of $u_{11}$ and $u_{13}$, each of which contradicts the case. We recolor $u_5, u_6, u_7, u_{11}$ with $E, 1, D, C$, and color $u_1, u_2, u_3, u_4$ with $D, 1, C, 1$.

If $A, B$ is distributed in $f(N(u_{13}))$ and $f(N(u_{14}))$, say $A \in f(N(u_{13}))$ and $B \in f(N(u_{14}))$. Then either $E \in f(N(u_{13}))$ or $1 \in f(N(u_{14}))$ since otherwise we can switch the colors of $u_{11}$ and $u_{14}$, which contradicts the case. Similarly, either $D \in f(N(u_{14}))$ or $1 \in f(N(u_{13}))$. Then we recolor $u_5, u_6, u_7, u_{11}$ with $E, 1, D, C$, and color $u_1, u_2, u_3, u_4$ with $D, 1, C, 1$.

 \textbf{Case 3.4.2:} $f(u_{11}) \neq 1$. we assume $1 \in f(N(u_{11}))$ since otherwise we recolor $u_{11}$ with $1$ and it is solved in Case 3.4.1. Since $\{D, E\} \subseteq f(N[u_{11}])$, we may assume that $f(u_{11}) = D$ and $f(N(u_{11})) = \{1, C, E\}$. Then similarly to Case 3.4.2, we assume $f(N(u_{10}) - u_5) = \{1, A\}$ and $f(N(u_{12}) - u_7) = \{1, B\}$. Then we recolor $u_5, u_6, u_7$ with $E, 1, C$, and color $u_1, u_2, u_3, u_4$ with $C, 1, D, 1$.

 \textbf{Case 4:} Three of $f(u_8), f(u_9), f(u_5), f(u_7)$ are in $\{A, B, C, D, E\}$.

 \textbf{Case 4.1:} $f(u_5) = 1$. Then we may assume $1 \in f(N(u_8))$, $1 \in f(N(u_9))$, $1 \in f(N(u_7))$, since otherwise, say $1 \notin f(N(u_8))$ we recolor $u_8$ with $1$ and it is solved in Case 3. Then we may color $u_1$ and $u_3$ with $1$. Since each of $u_2, u_4$ has at least one color available from $\{A, B, C, D, E\}$, both of $u_2$ and $u_4$ must have exactly one color available in $\{A, B, C, D, E\}$ and they must be the same color, say $E$. Thus, we may assume $f(u_8) = A$, $f(u_9) = B$, $f(u_7) = C$, $f(N(u_9)) = \{1, D\}$, $f(u_{12}) = 1$, $f(u_6) = D$, $f(u_{10}) = B$. Then $u_1$ has $\{C, D, E\}$ available and only one of them can be in $f(N(u_8))$. Thus, a color $x \in \{C, D\}$ can be used at $u_1$, and we color $u_2, u_3, u_4$ with $1, A, E$.

 \textbf{Case 4.2:} $f(u_8) = 1$. Similarly to Case 4.1, we assume $f(u_9) = A$, $f(u_7) = B$, $f(u_5) = C$, $f(u_{12}) = D$, $f(u_6) = 1$, $f(u_{10}) = A$, $f(N(u_8)) = \{B, D\}$, and $f(N(u_9)) = \{1, C\}$. Similarly to Case 3.3.1, we reach a contradiction.

 \textbf{Case 5:} All of $f(u_8), f(u_9), f(u_5), f(u_7)$ are in $\{A, B, C, D, E\}$. Similarly to Case 4, we may assume $f(u_8) = A$, $f(u_9) = B$, $f(u_7) = C$, $f(u_5) = D$, $f(u_6) = 1$, $f(u_{10}) = B$, $f(u_{12}) = A$, $f(N(u_8)) = \{1, C\}$, and $f(N(u_9)) = \{1, D\}$. Similarly to Case 3.3.1, we reach a contradiction.
\end{proof}

\begin{lemma}\label{no4|6}
There is no adjacent four-cycle with six-cycle.
\end{lemma}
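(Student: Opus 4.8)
The plan is to mirror the proofs of Lemmas~\ref{no4|4} and~\ref{no4|5}: delete the four vertices of the four-cycle, invoke minimality to obtain a good coloring of the smaller graph, and extend it back across $u_1,u_2,u_3,u_4$ by a case analysis on the colors appearing on the boundary. Label the four-cycle $u_1u_2u_3u_4$ and the six-cycle $u_3u_4u_5u_6u_7u_8$ so that $u_3u_4$ is the shared edge, and set $N(u_1)=\{u_2,u_4,u_9\}$, $N(u_2)=\{u_1,u_3,u_{10}\}$, $N(u_5)=\{u_4,u_6,u_{11}\}$, $N(u_6)=\{u_5,u_7,u_{12}\}$, $N(u_7)=\{u_6,u_8,u_{13}\}$, and $N(u_8)=\{u_3,u_7,u_{14}\}$; note that $u_3$ and $u_4$ are already internal to the configuration.

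First I would discharge the structural hypotheses that make the configuration clean. By Lemmas~\ref{no3|4},~\ref{no3|5},~\ref{no4|4}, and~\ref{no4|5} none of the chords $u_1u_5,u_1u_6,u_1u_7,u_2u_5,u_2u_6,u_2u_7$ (nor their symmetric partners) is present, since any such chord would create a triangle or a four- or five-cycle sharing an edge with one of the two cycles already in the configuration. Exactly as in the proof of Lemma~\ref{no4|5}, any common neighbour of two far-apart boundary vertices would produce a separating cycle of length at most seven, hence a cut-edge or a $2$-cut, contradicting Lemma~\ref{nocutedge} or~\ref{no2cut}; ruling these out guarantees that $u_5,u_8,u_9,u_{10}$ and their own neighbourhoods are distinct and that deleting the four-cycle leaves a genuine subcubic planar graph on fewer vertices.

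Deleting $u_1,u_2,u_3,u_4$ yields $G'$, which has a good coloring $f$ by minimality; under this deletion exactly the four boundary vertices $u_5,u_8,u_9,u_{10}$ drop to degree two, so the extension is governed by $f$ on these vertices together with their distance-two data $u_6,u_7,u_{11},\dots,u_{14}$. The four-cycle has diameter two, so each $2$-color may be used at most once on $\{u_1,u_2,u_3,u_4\}$, while color~$1$ may be repeated only on a non-adjacent (diagonal) pair. I would stratify the argument by how many of $f(u_5),f(u_8),f(u_9),f(u_{10})$ equal~$1$, precisely as in Cases~1--5 of Lemma~\ref{no4|5}. The standing reduction is that whenever a boundary vertex carries a $2$-color but $1$ is absent from its neighbourhood we recolor it to~$1$ and fall into a case with more~$1$'s; thus in each case we may assume $1$ lies in the neighbourhood of every boundary vertex colored by a $2$-color (here Lemma~\ref{degreetwofree} is the enabling device). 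A color count then leaves at most one obstructed configuration per case---namely when the available palettes of the four vertices coincide---and this is resolved by a Kempe-type switch propagated along the six-cycle (swapping the colors of $u_5,u_6$ or of $u_7,u_8$, and if necessary their external neighbours $u_{11},u_{12},u_{13},u_{14}$), after which an SDR completes the coloring.

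The main obstacle is exactly this last, fully-saturated case, where all of $u_5,u_8,u_9,u_{10}$ receive distinct $2$-colors and force identical lists of available colors on $u_1,u_2,u_3,u_4$; unwinding it requires carefully choosing which edge of the six-cycle to switch across and verifying that the switch does not re-create a conflict two steps away, which is where the bulk of the bookkeeping on $u_{11},\dots,u_{14}$ resides. I expect this to be somewhat less delicate than in Lemma~\ref{no4|5}, however, because the six-cycle places $u_5$ and $u_8$ at distance three along the cycle rather than two, giving extra slack at distance two and more room to absorb a swapped color without propagating a new violation.
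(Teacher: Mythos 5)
Your setup matches the paper's proof exactly: same labelling, same use of Lemmas~\ref{no3|4},~\ref{no3|5},~\ref{no4|4},~\ref{no4|5} and the cut lemmas to exclude chords and common neighbours, same deletion of $u_1,\dots,u_4$, and the same stratification by how many of $f(u_5),f(u_8),f(u_9),f(u_{10})$ receive $2$-colors. Up to that point the plan is sound.

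The gap is in the sentence ``A color count then leaves at most one obstructed configuration per case---namely when the available palettes of the four vertices coincide---and this is resolved by a Kempe-type switch.'' That is not what happens, and the claim is doing all the work. In the paper's argument, each stratum splinters into several genuinely distinct obstructions (e.g.\ Case~3 alone splits into four sub-cases by which pair of boundary vertices is colored~$1$, and Case~3.1 then branches further into 3.1.1.1, 3.1.1.2, 3.1.2.1, 3.1.2.2 depending on the colors of $u_6,u_7,u_{11},\dots,u_{14}$ and even on $f(N(u_{12})-u_6)$, $f(N(u_{13})-u_7)$, $f(N(u_{14})-u_8)$). The obstructions are not all of the form ``the four palettes coincide,'' and no single switch along the six-cycle resolves them: the paper needs, variously, a swap of $u_5$ and $u_{11}$, a swap of $u_7$ and $u_{13}$, simultaneous recolorings of three or four of $u_5,\dots,u_8$, and recolorings of exterior vertices like $u_{12}$ or $u_{14}$ to~$1$, each justified by first pinning down second-neighbourhood colors. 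Your closing prediction that this lemma should be \emph{less} delicate than Lemma~\ref{no4|5} is also backwards --- the greater distance between $u_5$ and $u_8$ along the six-cycle means the two ends of the configuration interact more weakly, so fewer colors are forced to repeat and more boundary colorings survive the easy reductions; the paper's proof of Lemma~\ref{no4|6} is accordingly much longer than that of Lemma~\ref{no4|5}. A minor additional slip: the reduction ``recolor a boundary vertex to~$1$ when $1$ is absent from its neighbourhood'' is immediate and does not invoke Lemma~\ref{degreetwofree}, which goes in the opposite direction (recoloring a $1$-colored vertex to a $2$-color). As written, the proposal is a correct outline of the paper's strategy but omits the case analysis that constitutes the proof.
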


\begin{proof}
Let $u_1, u_2, u_3, u_4$ be a four cycle and $u_3, u_4, u_5, u_6, u_7, u_8$ be a six cycle such that they share the edge $u_3u_4$. By Lemmas~\ref{no3|4},~\ref{no4|4}, and~\ref{no4|5}, $u_1u_5, u_1u_6, u_1u_7, u_2u_6, u_2u_7, u_2u_8 \notin E(G)$. We also know $u_1u_8 \notin E(G)$, since otherwise, say $u_1u_8 \in E(G)$, then either the $4$-cycle $u_1u_2u_3u_4u_1$ or the $4$-cycle $u_1u_2u_3u_8u_1$ is a separating $4$-cycle, which leads to a cut-edge or a $2$-cut. This is a contradiction to either Lemma~\ref{nocutedge} or Lemma~\ref{no2cut}. By symmetry, $u_2u_5 \notin E(G)$. Let $N(u_1) = \{u_2, u_4, u_9\}$, $N(u_2) = \{u_1, u_3, u_{10}\}$, $N(u_5) = \{u_4, u_6, u_{11}\}$, $N(u_6) = \{u_5, u_7, u_{12}\}$, $N(u_7) = \{u_6, u_8, u_{13}\}$, $N(u_8) = \{u_3, u_7, u_{14}\}$. By Lemmas~\ref{no4|4} and~\ref{no4|5}, $u_1,u_5$ have no common neighbour except $u_4$; $u_2, u_8$ have no common neighbour except $u_3$; $u_1, u_6$; $u_2, u_7$ have no common neighbour. We also know $u_1,u_8$ have no common neighbour, since otherwise, say $u_1, u_8$ have a common neighbour $u'$, either the $4$-cycle $u_1u_2u_3u_4u_1$ or the $5$-cycle $u_1u_2u_3u_8u'u_1$ is a separating cycle, which leads to a cut-edge or a $2$-cut. This is a contradiction to either Lemma~\ref{nocutedge} or Lemma~\ref{no2cut}. Furthermore, $u_1, u_7$ have no common neighbour. To see this, suppose $u_1, u_7$ have a common neighbour $u'$ with $N(u') = \{u_1, u_7, u''\}$. We notice that there is always a cut-edge or a $2$-cut among the five edges $u_2u_{10}, u_5u_{11}, u_6u_{12}, u_8u_{14},u'u''$. This is a contradiction to either Lemma~\ref{nocutedge} or Lemma~\ref{no2cut}. By symmetry, $u_2$ and each of $u_5, u_6$ have no common neighbour. We delete $u_1, u_2, u_3, u_4$ to obtain a subcubic planar graph $G'$. By the minimality of $G$, $G'$ has a good coloring $f$. We extend $f$ to $G$.

 \textbf{Case 1:} $f(u_9) = f(u_{10}) = f(u_5) = f(u_8) = 1$. Similarly to Case 1 in lemma~\ref{no4|5}, we assume $f(N(u_9)) = f(N(u_{10})) = f(N(u_5)) = f(N(u_8)) = \{A, B\}$. Specifically, we assume $f(u_7) = f(u_{11}) = A$ and $f(u_6) = f(u_{14}) = B$. Then we have $\{C, D, E\} \subseteq \{f(u_{12}), f(u_{13})\}$ since otherwise, say $C$ is missing, we recolor $u_7$ with $C$, color $u_1, u_2, u_3, u_4$ with $C, E, A, D$.

 \textbf{Case 2:} One of $f(u_9), f(u_{10}), f(u_5), f(u_8)$ is in $\{A, B, C, D, E\}$, say $A$. Similarly to Case 2 in lemma~\ref{no3|5}, $f$ can be extended to $G$.

 \textbf{Case 3:} Two of $f(u_9), f(u_{10}), f(u_5), f(u_8)$ is in $\{A, B, C, D, E\}$. By symmetry, we have the following $4$ cases.

 \textbf{Case 3.1:} $f(u_5) = f(u_{10}) = 1$. Similarly to Case 3.1 in lemma~\ref{no3|5}, we may assume $f(u_9) = A$, $f(N(u_{10})) = \{B, C\}$, $f(u_8) = D$, $f(u_{11}) = B$, $f(u_6) = C$, $1 \in f(N(u_9))$, and $1 \in \{f(u_7), f(u_{14})\}$. If $E \notin f(N^2(u_5))$, then we recolor $u_5$ with $E$, color $u_1, u_2, u_3, u_4$ with $x, E, y, 1$, where $x \in \{B, C, D\} - f(N(u_9))$ and $y \in \{A, B, C\} - \{f(N(u_5)), x\}$. Thus, $E \in f(N^2(u_5))$. Similarly, $\{A, D\} \subseteq f(N^2(u_5))$ and $\{A, D, E\} \subseteq f(N^2(u_{10}))$. Thus, we have either $1 \notin f(N(u_{11}) - u_5)$ and $B \notin f(N(u_6))$ or $1 \notin f(N(u_{6}) - u_5)$ and $C \notin f(N(u_{11}))$, and we can either color $u_5$ with $B$ and $u_{11}$ with $1$ or color $u_5$ with $C$ and $u_6$ with $1$. Similarly, $u_{10}$ can be colored with $B$ or $C$, and $1 \in f(N(u_{10}))$. We must have the new colors of $u_{5}$ and $u_{10}$ are $\{B, C\}$ since if not, say both have the new color $B$, then we color $u_1, u_2, u_3, u_4$ with $x, 1, y, 1$, where $x \in \{C, D, E\} - f(N(u_9))$ and $y \in \{A, C, E\} - \{f(N(u_5)), x\}$. Thus, we must have the new colors of $u_5$ and $u_{10}$ are different, and $f(N(u_9)) = \{1, D\}$ and $f(N(u_8)) = \{1, A\}$. If $f(N(u_{11}) - u_5) \cup f(N(u_6) - u_5) \cap \{1\} = \emptyset$, then we recolor both $u_6, u_{11}$ with $1$, $u_5$ with $B$, $u_1, u_2, u_3, u_4$ with $1, E, 1, C$. Thus, $1 \in f(N(u_{11}) - u_5)$ or $1 \in f(N(u_6) - u_5)$.

 \textbf{Case 3.1.1:} $1 \notin f(N(u_{6}) - u_5)$ and $C \notin f(N(u_{11}))$. Then $1 \in f(N(u_{11}) - u_5)$ and  $f(N(u_6) - u_5) \subseteq \{A, D, E\}$. Then we must have $f(u_7) = A$ and $f(u_{14}) = 1$. Then $f(u_{13}) = 1$ since otherwise we recolor $u_7$ with $1$ and it contradicts our assumption. Moreover, $\{B, E\} \subseteq f(N(u_{13}))$ since otherwise we can recolor $u_7$ with the missing color and it contradicts the case.

 \textbf{Case 3.1.1.1:} $E \in f(N(u_{11}))$ and $f(u_{12}) = D$. Then we know $1 \in f(N(u_{12}))$  since otherwise we can recolor $u_{12}$ with $1$ and $E \in f(N(u_{12}))$ since otherwise we can recolor $u_6$ with $E$, a contradiction with $f(u_6) = C$. Then we recolor $u_6$ with $A$ and $u_7$ with $C$, a contradiction.

 \textbf{Case 3.1.1.2:} $D \in f(N(u_{11}))$ and $f(u_{12}) = E$. Similarly to Case 3.1.1.1, we can assume $f(N(u_{12})) = \{1, C, D\}$ and $f(N(u_{13})) = \{A, B, C\}$. Moreover, $B \in f(N(u_{14}))$ since otherwise we can recolor $u_8$ with $B$, and $A \in f(N(u_{14}))$ since otherwise we can switch the colors of $u_7$ and $u_8$. Then we switch the colors of $u_5$ and $u_6$, and recolor $u_8$ with $C$ to obtain a contradiction.

 \textbf{Case 3.1.2:} $1 \notin f(N(u_{11}))$ and $B \notin f(N(u_6))$. Then $1 \in f(N(u_{6}) - u_5)$. 

 \textbf{Case 3.1.2.1:} $f(u_{12}) = 1$. Then $f(u_7) = A$ and $f(u_{14}) = 1$. Then $f(u_{13}) = 1$ since otherwise we can recolor $u_7$ with $1$ and it contradicts $f(u_7) = A$. Moreover, $\{f(N(u_{13}) - u_7) = \{B, E\}\}$ since otherwise we can recolor $u_7$ with the missing color and it contradicts $f(u_7) = A$. Then $E \in f(N(u_{12}) - u_6)$ since otherwise we can recolor $u_6$ with $E$ and $A \in f(N(u_{12}) - u_6)$ since otherwise we can recolor $u_6$ with $A$ and $u_7$ with $C$, which contradicts $f(u_7) = A$. We claim $B \in f(N(u_{14}) - u_8)$ since otherwise we can recolor $u_8$ with $B$ and $A \in f(N(u_{14}) - u_8)$ since otherwise we can recolor $u_8$ with $A$ and $u_7$ with $D$. Thus, we can recolor $u_{11}$ with $1$, $u_5$ with $B$, $u_6$ with $D$, and $u_8$ with $C$, recolor $u_{10}$ to $B$ or $C$, color $u_1, u_2, u_3, u_4$ with $E, 1, D, 1$.

 \textbf{Case 3.1.2.2:} $f(u_7) = 1$. Then $f(u_{14}) = A$, $1 \in f(N(u_{14}))$, $f(u_{12}) \in \{A, D, E\}$. We also know that $B \in f(N(u_{14}) - u_8)$ or $f(u_{13}) = B$ since otherwise we can recolor $u_8$ with $B$. 

If $f(u_{13}) = B$ then $f(N(u_{13}) - u_7) = \{1, E\}$ since if $1$ is missing then we can recolor $u_7$ with $B$, $u_8$ with $1$, and it is solved in Case 2; if $E$ is missing then we can recolor $u_7$ with $E$ and $u_8$ with $1$, which is again solved in Case 2. Moreover, $f(u_{12}) = D$ since otherwise we can switch the colors of $u_7$ and $u_8$. We can also assume $\{1, E\} \subseteq f(N(u_{12}) - u_6)$ since if $1$ is missing then we recolor $u_{12}$ with $1$ and if $E$ is missing then we recolor $u_6$ with $E$. Then we recolor $u_{11}$ with $1$, $u_5$ with $C$, $u_6$ with $A$, and color $u_1, u_2, u_3, u_4$ with $1, E, 1, B$.

Thus, $f(N(u_{14}) - u_8) = \{1, B\}$ and $f(u_{13}) \in \{A, E\}$. If $f(u_{13}) = E$, then similarly to previous cases we can assume $f(N(u_{13}) - u_7) = \{B, C\}$; we also assume $f(u_{12}) = E$ since otherwise we switch the colors of $u_7$ and $u_{13}$; then we switch the colors of $u_7$ and $u_8$, which is solved in Case 2. If $f(u_{13}) = A$, then we can assume $B \in f(N(u_{13}) - u_7)$; since $E \in N^2(u_7)$, we also have either $f(N(u_{13}) - u_7) = \{1, B\}$ and $f(u_{12}) = E$ or $f(N(u_{13}) - u_7) = \{E, B\}$ and $f(u_{12}) = A$, as otherwise we can switch the color of $u_7$ and $u_{13}$; then we switch the colors of $u_7$ and $u_8$, which is solved in Case 2.



 \textbf{Case 3.2:} $f(u_5) = f(u_{9}) = 1$. Similarly to Case 3.1, we may assume that $f(u_{10}) = A$, $f(N(u_{10})) = \{1, C\}$, $f(N(u_9)) = \{B, C\}$, $f(u_8) = B$, $\{f(u_7), f(u_{14})\} = \{1, C\}$, $\{f(u_6), f(u_{11})\} = \{A, C\}$.

 \textbf{Case 3.2.1:} $f(u_{14}) = 1$ and $f(u_7) = C$. Then $f(u_6) = A$ and $f(u_{11}) = C$. Then $f(u_{13}) = 1$ since otherwise we recolor $u_7$ with $1$ and it contradicts our case. We claim $f(N(u_{14}) - u_8) = \{D, E\}$ since if say $D$ is missing then we recolor $u_8$ with $D$, and color $u_1, u_2, u_3, u_4$ with $D, 1, B, E$. Similarly, $f(N(u_{13}) - u_7) = \{D, E\}$ and $\{D, E\} \subseteq f(N[u_{12}] - u_6)$. Then $f(u_{12}) = B$ since otherwise we switch the colors of $u_7$ and $u_8$, and color $u_1, u_2, u_3, u_4$ with $B, 1, D, E$. Thus, $f(N(u_{12}) - u_{6}) = \{D, E\}$ and we can recolor $u_{12}$ with $1$, which contradicts our assumption that $f(u_{12}) = B$.

 \textbf{Case 3.2.2:} $f(u_{14}) = C$ and $f(u_7) = 1$. Then $1 \in f(N(u_{14}))$ since otherwise we can recolor $u_{14}$ with $1$ and it contradicts the case. Moreover, similarly to Case 3.2.1, $\{D, E\} \subseteq f(N^2(u_8))$ and thus we may assume by symmetry that $f(u_{13}) = E$ and $f(N(u_{14})) = \{1, B, D\}$. 

 \textbf{Case 3.2.2.1:} $f(u_{11}) = A$ and $f(u_6) = C$. We claim $\{A, B, D\} \subseteq f(N^2(u_7)) - u_8$ since say $B$ is missing then we recolor $u_7$ with $B$ and $u_8$ with $1$, which is solved in Case 2. Then we switch the colors of $u_7$ and $u_{13}$, recolor $u_8$ with $1$, which is again solved in Case 2.

 \textbf{Case 3.2.2.2:} $f(u_{11}) = C$ and $f(u_6) = A$. We have $B \in f(N[u_{12}] - u_6)$ since otherwise we switch the colors of $u_6$ and $u_8$, color $u_1, u_2, u_3, u_4$ with $D, 1, B, E$. We also have $D \in f(N[u_{12}] - u_6)$ since otherwise $u_6$ can be recolored to $D$, which contradicts the case. Similarly to Case 3.2.2.1, we assume $\{B, D\} \subseteq f(N^2(u_7)) - u_8$. If $f(u_{12}) \neq B$, then we switch the colors of $u_7$ and $u_{13}$, and recolor $u_8$ with $1$, which is solved in Case 2. Thus, we assume $f(u_{12}) = B$, $f(N(u_{12}) - u_6) = \{1, D\}$, and $f(N(u_{13}) - u_7) = \{1, D\}$. Moreover, we have $\{D, E\} \subseteq f(N^2(u_5))$ since if say $D$ is missing then we recolor $u_5$ with $D$, switch the colors of $u_6$ and $u_7$, and recolor $u_8$ with $1$, which is solved in Case 3.1. Then we switch the colors of $u_5$ and $u_{11}$, and color $u_1, u_2, u_3, u_4$ with $D, 1, E, 1$. 

 \textbf{Case 3.3:} $f(u_9) = f(u_{10}) = 1$. Similarly to Case 3.1, we may assume that $f(N(u_9)) = \{B, C\}$, $f(N(u_{10})) = \{A, C\}$, $f(u_8) = B$, $f(u_5) = A$, $f(N(u_8)) = f(N(u_5)) = \{1, C\}$. By symmetry, we may assume that $f(u_6) = f(u_{14}) = C$ and $f(u_7) = f(u_{11}) = 1$. Moreover, $\{A, D, E\} \subseteq f(N^2(u_8))$ since if $D (E)$ or $A$ is missing, then we recolor $u_8$ with the missing color and color $u_1, u_2, u_3, u_4$ with $D (E), E(D), B, 1$. Thus, $f(N(u_{14}) - u_8) \subseteq \{A, D, E\}$ and we can recolor $u_{14}$ with $1$, and color $u_1, u_2, u_3, u_4$ with $D, E, C, 1$.



 \textbf{Case 3.4:} $f(u_5) = f(u_{8}) = 1$. Similarly to Case 3.1, we may assume that $f(u_9) = A$, $f(u_{10}) = B$, $f(N(u_9)) = f(N(u_{10})) = \{1, C\}$, and by symmetry $f(u_{11}) = f(u_7) = A$ and $f(u_6) = f(u_{14}) = C$. We also assume $\{B, D, E\} \subseteq f(N^2(u_7))$ since otherwise we can recolor $u_7$ with $B (D, E)$ and color $u_1, u_2, u_3, u_4$ with $D, 1, A, E$. Similarly, $\{B, D, E\} \subseteq f(N^2(u_6))$. If one of $u_{13}$ or $u_{12}$ is not colored by $1$, say $u_{13}$, then we have $f(N[u_{12}]) = \{B, C, D, E\}$ and thus we can recolor $u_{12}$ with $1$, which contradicts $\{B, D, E\} \subseteq f(N^2(u_6))$. Thus, we may by symmetry assume $f(u_{12}) = D$, $f(u_{13}) = E$, and $f(N(u_{12}) - u_6) = f(N(u_{13}) - u_7) = \{1, B\}$. Moreover, we have $f(N(u_{14}) - u_8) = \{A, B\}$ since if $A (B)$ is missing then we recolor $u_8$ with $A(B)$, $u_7$ with $1$, and color $u_1, u_2, u_3, u_4$ with $1, D, 1, E$. Similarly, we can assume $f(N(u_{11}) - u_5) = \{B, C\}$. Then we recolor $u_{11}$ with $1$, $u_5$ with $A$, $u_7$ with $1$, $u_8$ with $D$, and color $u_1, u_2, u_3, u_4$ with $D, 1, E, 1$.

 \textbf{Case 4:} Three of $f(u_8), f(u_9), f(u_5), f(u_7)$ are in $\{A, B, C, D, E\}$. By symmetry, there are two cases, i.e., $f(u_9) = 1$ or $f(u_8) = 1$.

 \textbf{Case 4.1:} $f(u_9) = 1$. Similarly to Case 3, we may assume $f(u_5) = A$, $f(u_8) = B$, $f(u_{10}) = C$, $f(N(u_9)) = \{B, D\}$, $f(N(u_8)) = \{1, D\}$, and $1 \in f(N(u_5))$. Then $\{A, C, E\} \subseteq f(N^2(u_8))$, since if $A(C, E)$ is missing then we can recolor $u_8$ with $A(C, E)$ and color $u_1, u_2, u_3, u_4$ with $E, 1, B, 1$. Similarly, $B \in f(N^2(u_5))$.

 \textbf{Case 4.1.1:} $f(u_7) = 1$ and $f(u_{14}) = D$.

 \textbf{Case 4.1.1.1:} $f(u_{13}) = A$. If $f(u_6) = C$ and $f(N(u_{14}) - u_8) = \{1, E\}$, then $f(u_{11}) = 1$. Similarly to Case 3.4, we may assume $\{B, D, E\} \subset f(N^2[u_6] - u_8)$, but then we can recolor $u_{12}$ with $1$, a contradiction. Thus, we may assume $f(u_6) = E$ and $f(N(u_{14}) - u_8) = \{1, C\}$. Similarly, $\{B, D, C\} \subset f(N^2[u_6] - u_8)$, but then we can recolor $u_{12}$ with $1$, a contradiction.

 \textbf{Case 4.1.1.2:} $f(N(u_{14})) = \{1, A, B\}$. Then $\{f(u_6), f(u_{13})\} = \{C, E\}$. 

If $f(u_6) = C$ and $f(u_{13}) = E$, then $f(u_{11}) = 1$. Similarly to Case 4.1.1, we may assume $\{B, D, E\} \subseteq f(N^2(u_6))$. We claim $B \in f(N^2(u_7) - u_8)$ since otherwise we can switch the colors of $u_7$ and $u_8$, and either $f(u_{12}) = E$ or $1 \in f(N(u_{13}) - u_7)$ since otherwise we can switch the colors of $u_7$ and $u_{13}$ then recolor $u_8$ with $1$, both are solved in Case 3. Thus if $f(u_{12}) = D$, then $f(N(u_{13}) - u_7) = \{1, B\}$ and we switch the colors of $u_6$ and $u_7$, and recolor $u_8$ with $1$, which is solved in Case 3. If $f(u_{12}) = B$, then $f(N(u_{13}) - u_7) = \{1, C\} $, $f(N(u_{12}) - u_6) = \{1, D\}$, and since one of the colors $x \in \{C, D, E\}$ is missing in $f(N(u_{11}) - u_5)$ we can recolor $u_5$ with $x$, $u_6$ with $1$, $u_7$ with $A$, and $u_8$ with $1$, which is solved in Case 3. If $f(u_{12}) \notin \{B, D\}$, then $f(N(u_{12}) - u_6) = \{B, D\}$, we recolor $u_{12}$ with $1$; again we assume that $f(N(u_{13}) - u_7) = \{1, B\}$ and since one of the colors $x \in \{B, D, E\}$ is missing in $f(N(u_{11}) - u_5)$ we can recolor $u_5$ with $x$, $u_7$ with $A$, and $u_8$ with $1$, which is solved in Case 3.

If $f(u_6) = E$, then $f(u_{11}) = 1$. We have either $f(u_{12}) = 1$ or $E \in f(N(u_{13}) - u_7)$ since otherwise we can switch the colors of $u_6$ and $u_7$, and recolor $u_8$ with $1$. If $f(u_{12}) = 1$, then we have $f(N(u_{13}) - u_7) = \{1, B\}$ since otherwise we can either switch the colors of $u_7$ and $u_{13}$ (then recolor $u_8$ with $1$) or $u_7$ and $u_8$. Then since one of the colors $x \in \{B, C, D\}$ is missing in $f(N(u_{11}) - u_5)$ we can recolor $u_5$ with $x$, $u_7$ with $A$, and $u_8$ with $1$, which is solved in Case 3. Thus, $E \in f(N(u_{13}) - u_7)$ and $f(u_{12}) \neq 1$. Similarly, we claim then $f(u_{12}) = C$ or $f(N(u_{13}) - u_7) = \{1, E\}$ and $f(u_{12}) = B$ or $f(N(u_{13}) - u_7) = \{B, E\}$. Then, one of the colors $x \in \{B (C), D, E\}$ is missing in $f(N(u_{11}) - u_5)$ we can recolor $u_5$ with $x$, $u_6$ with $1$, $u_7$ with $A$, and $u_8$ with $1$, which is solved in Case 3.

 \textbf{Case 4.1.2:} $f(u_7) = D$ and $f(u_{14}) = 1$. Then $1 \in \{f(u_6), f(u_{13})\}$. Similarly to Case 4.1.1, $\{A, C, E\} \subseteq f(N^2(u_8))$.

 \textbf{Case 4.1.2.1:} $f(u_{13}) = 1$. Then $f(u_6) \in \{C, E\}$ and $f(u_{12}) = 1$. Then $f(N(u_{11})) = \{B, C(E)\}$ and $f(N(u_{13}) - u_7) = \{B, C (E)\}$. If $f(u_6) = C (E)$ and $f(N(u_{14}) - u_8) = \{A, E (C)\}$, then $f(N(u_{13}) - u_7) = \{B, E (C)\}$. Similarly to Case 4.1.1, $f(N(u_{11}) - u_5) = \{B, E(C)\}$. We recolor $u_5,u_7,u_8$ with $D,A,1$, which is solved in Case 3.

 \textbf{Case 4.1.2.2:} $f(u_6) = 1$. Similarly to Case 4.1.2.1, $\{A, C, E\} \subseteq f(N^2(u_8))$. 

If $f(u_{13}) = A$, then $1 \in f(N(u_{13}) - u_7)$ and since $\{C, E\} \subseteq f(N^2(u_7))$ we can switch the colors of $u_7$ and $u_8$, we assume $f(u_{12}) = C(E)$ and $f(N(u_{13}) - u_7) = 1, E(C)$. We may assume $D \in f(u_{11}) \cup (f(N(u_{12}) - u_6))$, $B \in  f(u_{11}) \cup (f(N(u_{12}) - u_6))$, $1 \in (f(N(u_{12}) - u_6))$ or $f(u_{11}) = C(E)$. Thus, if $f(u_{11}) = C(E)$, then $(f(N(u_{12}) - u_6)) = D, B$. Then we change the color of $u_5$ to $1$ and recolor $u_6$ with $E(C)$, which is solved in Case 3. Therefore, $f(N(u_{12}) - u_6) = \{1, D\}$ and $f(u_{11}) = B$, and again we can change the color of $u_5$ to $1$ and recolor $u_6$ with $E (C)$, which is solved in Case 3.

Thus, $f(u_{13}) \in \{C, E\}$, say $C (E)$ (the proof for $E$ is symmetric). Similarly, we have $f(N(u_{14}) - u_8)=\{A, E(C)\}$, $1 \in f(N(u_{13}) - u_7)$, $E(C) \in f(u_{12}) \cup f(N(u_{13}) - u_7)$, $f(u_{11}) = D$ or $D \in f(N[u_{12}])$, $B \in f(N[u_{12}]) \cup f(u_{11})$. 

$\bullet$ $f(u_{12}) = E (C)$. Then either $f(u_{11}) = 1$ and $f(N(u_{12}) - u_6) = \{B, D\}$ or $f(u_{11}) \neq 1$ and $f(N(u_{12}) - u_6) \in \{A, B, D\}$; but then we can switch the colors of $u_6$ and $u_{12}$, color $u_7$ with $1$, and color $u_1, u_2, u_3, u_4$ with $E, 1, D, 1$.

$\bullet$ $f(u_{12}) = B$. Then either $f(u_{11}) = 1$ or $A \in f(N(u_{12})- u_6)$. We also know either $f(u_{11}) = D$ or $D \in f(N(u_{12})- u_6)$ and either $f(u_{11}) = B$ or $1 \in f(N(u_{12})- u_6)$. Thus, the colors of $u_{11}$ and $f(N(u_{12}) - u_6)$ are either $B, A, D$, $D, A, 1$, or $1, D, 1$, and we can thus recolor $u_6$ with $E (C)$ and $u_7$ with $1$, a contradiction.

$\bullet$ $f(u_{12}) = C (E)$. Then again we have either $f(u_{11}) = D$ or $D \in f(N(u_{12}) - u_6)$, either $f(u_{11}) = 1$ or $A \in f(N(u_{12}) - u_6)$, and either $f(u_{11}) = B$ or $B \in f(N(u_{12}) - u_6)$. Thus, the colors of $u_{11}$ and $f(N(u_{12}) - u_6)$ are either $B, A, D$, $D, A, B$, or $1, D, B$, and we can thus recolor $u_6$ with $E (C)$ and $u_7$ with $1$, a contradiction.

 \textbf{Case 4.2:} $f(u_8) = 1$. Similarly to Case 4.1, we can assume that $f(u_5) = A$, $f(u_9) = B$, $f(u_{10}) = C$, $1 \in f(u_{10})$, $1 \in f(N(u_5))$, $\{f(u_7), f(u_{14})\} = \{B, D\}$, and $f(N(u_9)) = \{1, D\}$. We may assume $f(u_7) = B$ and $f(u_{14}) = D$ (the proof for $f(u_7) = D$ and $f(u_{14}) = B$ is similar). We claim $\{A, C\} \subseteq f(N^2(u_8))$ since if $A(C)$ is missing then we recolor $u_8$ with $A(C)$ and color $u_1, u_3 $ with $1$. Since $\{C, D, E\}$ are available at $u_4$ and $D, E$ are available at $u_2$, and we also know $1 \in f(N(u_5)) \cap f(N(u_{10}))$ thus at most one colors can be deleted from the available colors of each of $u_2$ and $u_4$, there are at least two colors available at $u_4$ and at least one color available at $u_2$, and thus we can extend the coloring to $G$. For the same reason, we have either $1 \in \{f(u_6), f(u_{13})\}$ or $B \in f(N(u_{14}) - u_8)$.

 \textbf{Case 4.2.1:} $f(u_{11}) = 1$ and $f(u_{6}) \neq 1$. Then $f(u_6) \in \{C, D, E\}$ and we assume $f(u_{12}) = f(u_{13}) = 1$. 

 \textbf{Case 4.2.1.1:} $f(u_{6}) = D$. We have $A \in f(N^2(u_8))$ since otherwise we recolor $u_8$ with $A$ and color $u_1, u_2, u_3, u_4$ with $1, D \text{ or } E, 1, C$. We have $C \in f(N^2(u_8))$ since otherwise we recolor $u_8$ with $C$ and color $u_1, u_2, u_3, u_4$ with $1, A \text{ or } D, 1, E$. Then $f(N(u_{14}) - u_8) = \{A, C\}$ and we recolor $u_8$ with $E$, color $u_1, u_2, u_3, u_4$ with $1, D \text{ or } A, 1, C$.


 \textbf{Case 4.2.1.2:} $f(u_{6}) = E$. Similarly to Case 4.2.2.1, $f(N(u_{14}) - u_8) = \{A, C\}$. We switch the colors of $u_8$ and $u_{14}$, and color $u_1, u_2, u_3, u_4$ with $1, E \text{ or }A, 1, C$.

 \textbf{Case 4.2.1.3:} $f(u_{6}) = C$. Then $f(N(u_{12}) - u_6) = \{D, E\}$ since otherwise we can recolor $u_6$ with $D$ or $E$, which is solved in Case 4.2.2.1 and Case 4.2.2.2. Similarly, $A \in f(N(u_{14}) - u_8)$ and thus we can either recolor $u_8$ with $D$ or $E$. We also claim $D \in f(N(u_{11}) - u_5) $ since if $D$ is missing then we recolor $u_5$ with $D$, and color $u_1, u_2, u_3, u_4$ with $E, 1, A, 1$. We know $E \in f(N(u_{13}) - u_7)$ and claim $f(N(u_{13}) - u_7) = \{E, C\}$ since otherwise we can switch the colors of $u_6$ and $u_7$, and color $u_1, u_2, u_3, u_4$ with $E, 1, B, 1$. Moreover, we have $B \in f(N(u_{11}) - u_5)$ since otherwise we can recolor $u_5$ with $B$, $u_7$ with $A$, and color $u_1, u_2, u_3, u_4$ with $E, 1, B, 1$. Then we recolor $u_5$ with $E$, $u_7$ with $A$, and color $u_1, u_2, u_3, u_4$ with $E, 1, B, 1$.

 \textbf{Case 4.2.2:} $f(u_6) = 1$. 

 \textbf{Case 4.2.2.1:} $f(N(u_{14}) - u_8) = \{A, C\}$. Then we must have $f(u_{13}) = E$ and $1 \in f(N(u_{13}) - u_7)$ since otherwise we can recolor $u_8$ with $E$ and $u_{14}$ with $1$, and color $u_1, u_2, u_3, u_4$ with $E, 1, D, 1$. We also assume $f(u_{11}) = C$ and $f(N(u_{10})) = \{1, A\}$ since otherwise we can switch the colors of $u_8$ and $u_{14}$, color $u_1, u_2, u_3, u_4$ with $1,A,1,E$ or $1, E, 1, C$. We know $A \in f(N(u_{12}) - u_6)$ since otherwise we can color $u_6$ with the missing color and recolor $u_5$ with $1$, which is solved in Case 3. Moreover, we assume $B \in f(N(u_{12}) - u_6)$ since otherwise we switch the colors of $u_6$ and $u_7$, switch the colors of $u_8$ and $u_{14}$, and color $u_1, u_2, u_3, u_4$ with $E, 1, B, 1$. Thus, $f(N(u_{12}) - u_6) = \{A, B\}$. Then we recolor $u_{12}$ with $1$, $u_6$ with $D$, and $u_5$ with $1$, which is solved in Case 3.

 \textbf{Case 4.2.2.2:} $f(u_{13}) = A$. Then $f(N(u_{14}) - u_8) = \{1, C\}$ or $\{C, E\}$ since otherwise we recolor $u_8$ with $E$, $u_{14}$ with $1$, and color $u_1, u_2, u_3, u_4$ with $E, 1, D, 1$. We claim $f(N(u_{13}) - u_7) = \{1, C(E)\}$ and $f(u_{12}) = E(C)$. Then since $u_8$ can be replaced by $D$ (and $u_{14}$ with $1$) or $E$, we can switch the colors of $u_6$ and $u_7$, and color $u_1, u_2, u_3, u_4$ with $E, 1, B, 1$, unless $B \in f(u_{11}) \cup f(N(u_{12}) - u_6)$. We also have $C \in f(u_{11}) \cup f(N(u_{12}) - u_6)$. Moreover, $1 \in f(N(u_{12}) - u_6) $ or $f(u_{11}) = E$. Then we have the colors of $u_{11}$ and $f(N(u_{12}) - u_6)$ is $E, B, C$ or $C, B, 1$ or $B, C, 1$, and thus we can recolor $u_6$ with $D$ and $u_5$ with $1$, which is solved in Case 3.

 \textbf{Case 4.2.2.3:} $f(u_{13}) = C$. Then $f(N(u_{14}) - u_8) = \{1, A\}$ or $\{A, E\}$. Similarly, we have $f(u_{12}) = E$ or $E \in f(N(u_{13}) - u_7)$, $f(u_{11}) = 1$ or $A \in f(N[u_{12}])$, $f(u_{11}) = B$ or $B \in f(N(u_{12}) - u_6)$. If $f(u_{12}) = E$ and $f(u_{11}) = 1$, then $f(N(u_{12}) - u_6) = \{1, B\}$ and thus we can recolor $u_6$ with $D$ and $u_5$ with $1$, which is solved in Case 3. If $f(u_{12}) = E$ but $f(u_{11}) \neq 1$, then $f(u_{11}) \in \{B, E\}$ and $f(N(u_{12}) - u_6) = \{1(B), A\}$ and again we can recolor $u_6$ with $D$ and $u_5$ with $1$, which is solved in Case 3. Thus, we may assume $f(u_{12}) \neq E$ and $f(N(u_{13}) - u_7) = \{1, E\}$. Then $f(u_{12}) \in \{C, D\}$. Say $f(u_{12}) = D (C)$. If $f(u_{11}) = 1$ then $f(N(u_{12}) - u_6) = \{B, E\}$ and we can recolor $u_6$ with $D$, which is solved in Case 4.2.1. If $f(u_{11}) \neq 1$, then $f(N(u_{12}) - u_6) = \{A, B (E)\}$ and $f(u_{11}) = E(B)$, and thus we can recolor $u_6$ with $D$ and $u_{12}$ with $1$, and recolor $u_5$ with $1$, which is solved in Case 3.

 \textbf{Case 5:} All of $f(u_8), f(u_9), f(u_5), f(u_7)$ are in $\{A, B, C, D, E\}$. Similarly to Case 3, we may assume $f(u_9) = A$, $f(u_{10}) = B$, $f(u_5) = C$, $f(u_8) = D$, $f(N(u_9)) = \{1, D\}$, $f(N(u_{10})) = \{1, C\}$, $f(N(u_5)) = \{1, B\}$, and $f(N(u_8)) = \{1, A\}$. By symmetry, we have the following two cases.

 \textbf{Case 5.1:} $f(u_{11}) = f(u_{14}) = 1$, $f(u_{6}) = B$, and $f(u_{7}) = A$. Then $f(u_{12}) = f(u_{13}) = 1$. We can also assume $f(N(u_{14}) - u_8) = \{C, E\}$ since if $C (E)$ is missing then we recolor $u_8$ with $C (E)$ and color $u_1, u_2, u_3, u_4$ with $E, 1, D, 1$. Similarly, we can assume that $f(N(u_{11}) - u_5) = \{D, E\}$ and $E \in f(N(u_{13}) - u_7) \cap f(N(u_{12}) - u_6)$. Moreover, $D \in f(N(u_{13}) - u_7)$ since otherwise we can switch the colors of $u_7$ and $u_8$. Similarly, we can assume $C \in f(N(u_{12}) - u_6)$. Then we switch the colors of $u_6$ and $u_7$, and color $u_1, u_2, u_3, u_4$ with $1, E, 1, B$.

 \textbf{Case 5.2:} $f(u_{11}) = f(u_{7}) = 1$, $f(u_{6}) = B$, and $f(u_{14}) = A$. Similarly to Case 5.1, we may assume $1 \in f(N(u_{14}))$ and $\{C, E\} \subseteq \{f(N^2(u_8))\}$.

 \textbf{Case 5.2.1:} $f(u_{13}) = C$. Similarly to Case 5.1 and cases before, we assume $\{A, D, E\} \subseteq f(N^2(u_5))$ and $E \in f(N(u_{13})-u_7)$. If $f(u_{12}) \in \{A, D\}$ then $B \in f(N(u_{13})-u_7)$ since otherwise we switch the colors of $u_6$ and $u_7$, and recolor $u_8$ with $1$, which is solved in Case 4. We switch the colors of $u_{13}$ and $u_7$ and recolor $u_8$ with $1$, which is solved in Case 4, a contradiction. Thus, we may assume $f(u_{12}) = E$. Similarly, we assume $f(N(u_{13}) - u_7) = \{D, B\}$ and switch the colors of $u_{13}$ and $u_7$ and recolor $u_8$ with $1$, which is solved in Case 4, a contradiction.

 \textbf{Case 5.2.2:} $f(u_{13}) = E$. Similarly to Case 5.2.1, if $f(u_{12}) \in \{A, D\}$, then $f(N(u_{13}) - u_7) = \{1, B\}$ and we recolor $u_5, u_6, u_7, u_8$ to $B, 1, C, 1$, which is solved in Case 4. If $f(u_{12}) = E$, then $f(N(u_{13}) - u_7) = \{B, D\}$ and we recolor $u_5, u_6, u_7, u_8$ to $B, 1, C, 1$, which is solved in Case 4.
\end{proof}

\begin{lemma}\label{no374}
There are no configurations in Class One.
\end{lemma}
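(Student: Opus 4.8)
The plan is to dispose of all seven configurations of Class One by a single uniform reducibility scheme, in the same spirit as Lemmas~\ref{no3|3}--\ref{no4|6}, with the difference that the required case analysis is now far too large to carry out by hand and is therefore delegated to a verified computer search. For each configuration $H$ (that is, $3|7|4$, $3|7|5$-I, $3|7|5$-II, $7|4|4|4$, $7|4|5|4$, $7|4|4|5$, and $7|4|5|5|5$), I would assume $G$ contains $H$ and derive a contradiction by showing that every good coloring of a strictly smaller graph extends to $G$.

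First I would fix the local structure. The vertices of $H$ split into \emph{interior} vertices (those we intend to recolor, lying on the $7$-cycle and its attached faces) and \emph{boundary} vertices (neighbours of interior vertices lying outside $H$); each boundary vertex together with its neighbours outside $H$ supplies the \emph{pendant} vertices whose colors, via the distance constraints, determine what is available on the boundary. Before deleting anything I would invoke the already-established lemmas to guarantee this picture is non-degenerate: Lemmas~\ref{nocutedge} and~\ref{no2cut} rule out a cut-edge or $2$-cut, while Lemmas~\ref{no3|3}--\ref{no4|6} rule out the short adjacencies that would force two boundary or pendant vertices to coincide or to be joined by an unwanted chord. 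Where a triangle is present I would additionally contract it to a single vertex and feed in the strong restriction of Lemma~\ref{only-case} and Remark~\ref{neighbor}, which pin down the colors $A,B,C$ and the pattern $\{1,D\}$ on the three contracted neighbourhoods and thereby massively prune the search.

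Next I would delete the interior vertices of $H$ (adding, where useful, a few auxiliary edges among boundary vertices that keep the graph cubic and planar) to obtain a subcubic planar graph $G'$ with $|V(G')|<|V(G)|$; by minimality $G'$ has a good coloring $f$. The auxiliary edges serve only to cut down the number of boundary color patterns that must be examined. The reduction then becomes a purely local, finite statement: over every good coloring of the pendant vertices, and hence over every resulting list of available colors at the boundary, there is a choice of colors for the interior vertices of $H$ respecting the $1$-color independence constraint and the pairwise-distance-$\ge 3$ constraint for each $2$-color. I would enumerate all such partial colorings and check extendability of each by computer, as summarized in the text; to make the argument human-checkable I would also write out one representative by hand, namely $3|7|4$, contracting the triangle, reading off the forced colors around it from Lemma~\ref{only-case}, and then propagating along the $7$-cycle and the attached $4$-cycle as in the proofs of Lemmas~\ref{no3|5} and~\ref{no4|5}.

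The main obstacle is twofold. Combinatorially, the number of boundary color patterns grows very fast with the size of $H$ (the $7|4|5|5|5$ configuration being the worst case), so the search is only feasible after aggressive pruning via the structural lemmas, the triangle contraction, and the auxiliary edges; correctness then hinges on the enumeration being \emph{exhaustive}, which forces one to separately list and check the degenerate boundary identifications that the generic argument misses (the cases collected in Section~6.4). Logically, the delicate point is that the distance-$\ge 3$ requirement for the $2$-colors reaches two steps into the rest of $G$, so one must verify that the pendant vertices faithfully capture every constraint an extension at the boundary can feel; this is exactly what the absence of short separating cycles, guaranteed by Lemmas~\ref{nocutedge}, \ref{no2cut}, and~\ref{no3|3}--\ref{no4|6}, is used to ensure.
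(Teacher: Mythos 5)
Your proposal follows essentially the same route as the paper: reduce each Class One configuration by deleting its interior, invoking minimality to color the smaller graph, and then verifying by an exhaustive computer search over pendant-vertex colorings (pruned by auxiliary edges, the earlier structural lemmas, and Lemma~\ref{only-case}) that every partial good coloring extends, with the degenerate boundary identifications handled separately as in Section~6.4 and a hand-checked illustration for ``$3|7|4$''. This matches the paper's argument, so no further comparison is needed.
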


\begin{proof}
The proof for configurations in Class One, i.e., ``$3|7|4$'', ``$3|7|5$-I'', ``$3|7|5$-II'', ``$7|4|4|4$'', ``$7|4|5|4$'', ``$7|4|4|5$'', and ``$7|4|5|5|5$'', is done by computer (see Appendix). 
To illustrate the proof idea, we give the proof without a computer for the non-existence of the configuration ``$3|7|4$'' in the Appendix.
\end{proof}

\begin{lemma}\label{no555-I}
There are no configurations in Class Two. 
\end{lemma}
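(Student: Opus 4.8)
The plan is to mirror exactly the argument of Lemma~\ref{no374}, now applied to the six Class Two configurations ``$5|5|5$-I'', ``$5|5|5$-II'', ``$6|5|5$-I'', ``$6|5|5$-II'', ``$6|5|6$-I'', and ``$6|5|6$-II'', each of which consists of a $5$-cycle sharing edges with one or two neighbouring faces. For a fixed configuration I would first name its vertices and delete a suitable set of its interior vertices (or contract a triangle, as in Lemmas~\ref{no3|5}--\ref{no3|6}) to obtain a subcubic planar graph $G'$ with $|V(G')| < |V(G)|$; by minimality $G'$ has a good coloring $f$. It then suffices to prove that, for every good coloring of the \emph{pendant vertices}, one can extend or recolor at the deleted interior vertices so as to produce a good coloring of $G$, contradicting the choice of $G$.

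To make this finite and checkable, I would, as in Lemma~\ref{no374}, add extra edges between boundary vertices whenever this keeps the graph cubic and planar; this constrains which colorings of the pendant vertices are admissible and sharply reduces the search space. The non-existence of $1$-cuts and $2$-cuts (Lemmas~\ref{nocutedge} and~\ref{no2cut}) together with Lemmas~\ref{no3|3}--\ref{no4|6} guarantees that the boundary and pendant vertices are all distinct and that no forbidden subconfiguration is hidden inside the patch, so the enumeration runs over genuine colorings and each configuration is well defined. A computer program then iterates over all good colorings of the pendant vertices and verifies that each one extends; the few residual cases not covered by this generic reduction are exactly those collected in Section~6.4 and are checked individually.

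To exhibit the underlying idea without machine assistance, I would give a hand proof for ``$5|5|5$-I'' in the Appendix. There the key mechanism is that the single $1$-color can always be placed on at least one interior vertex to break the tight distance-$3$ constraints, after which the remaining interior vertices are colored from $\{A,B,C,D,E\}$ by building a bipartite available-colors incidence and finishing via a system of distinct representatives (``done by SDR''); the distance-$3$ condition is precisely what forces a $2$-color used on one $5$-cycle to be forbidden on the adjacent one, and this is the source of every nontrivial subcase.

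The main obstacle I anticipate is the combinatorial blow-up: each Class Two patch touches many pendant vertices, whose joint colorings are numerous once the distance-$2$ and distance-$3$ restrictions are imposed, so a fully hand-written case analysis is infeasible and correctness really rests on faithfully encoding the packing constraints (pairwise distance at least $3$ for each $2$-color, properness for the $1$-color) in the program. The secondary difficulty is bookkeeping: one must argue that the chosen deletions, the added boundary edges, and the prior lemmas jointly exclude every degenerate adjacency, so that no case slips through uncovered --- this is exactly what the Section~6.4 list and the illustrative hand proof of ``$5|5|5$-I'' are included to certify.
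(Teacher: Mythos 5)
Your proposal matches the paper's proof essentially verbatim: the paper also reduces each Class Two configuration by deleting its interior vertices (adding edges such as $v_2v_5$, $v_3v_4$, $v_2''v_4''$ between boundary vertices), invokes minimality, verifies by computer that every good coloring of the pendant vertices extends (relying on Lemmas~\ref{nocutedge}--\ref{no4|6} to certify the boundary structure and listing the residual cases in Section~6.4), and gives the same SDR-based hand proof of ``$5|5|5$-I'' in the Appendix as the illustrative case. No substantive difference in approach.
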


\begin{proof}
The proof for configurations in Class Two, i.e., ``$5|5|5$-I'', ```$5|5|5$-II'', ``$6|5|5$-I'', ``$6|5|5$-II'', ``$6|5|6$-I'', and ``$6|5|6$-II'', is done by computer (see Appendix). 
To illustrate the proof idea, we give the proof without a computer for the non-existence of the configuration  ``$5|5|5$-I'' in the Appendix.
\end{proof}

\begin{lemma}\label{no353}
There is no configuration ``$3$-$5$-$3$''. 
\end{lemma}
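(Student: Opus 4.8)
The plan is to follow the same reduction used for the configurations ``$3$-$3$'' and ``$3$--$3$'' (Lemmas~\ref{no3-3} and~\ref{no3--3}). Write the configuration as a triangle $u_1u_2u_3$, a five-cycle, and a second triangle $w_1w_2w_3$, joined by the edge connecting $u_3$ to a five-cycle vertex $u_4$ and by an edge connecting a five-cycle vertex to $w_1$; label the five-cycle $u_4u_5u_6u_7u_8$ and set $N(u_1)=\{u_2,u_3,v_1\}$, $N(u_2)=\{u_1,u_3,v_2\}$. First I would record all the forced non-adjacencies: using Lemmas~\ref{no3|3}--\ref{no4|6} together with planarity and the absence of $1$-cuts and $2$-cuts (Lemmas~\ref{nocutedge} and~\ref{no2cut}), no chord or common neighbour may exist that would turn a sub-cycle into a separating cycle of length at most $7$. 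I would then delete $u_1,u_2,u_3$ and add a vertex $u$ joined to $u_4,v_1,v_2$, obtaining a smaller subcubic planar graph $G'$; by minimality $G'$ has a good coloring $f$, and the second triangle remains intact and coloured inside $f$.

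Next I would invoke Lemma~\ref{only-case} and Remark~\ref{neighbor} at the contracted triangle. In every case other than the rigid one, $f$ extends directly to $u_1,u_2,u_3$ and we are done, so I may assume the three neighbours of $u$ carry three distinct $2$-colors; normalising as in Remark~\ref{neighbor}, $f(u_4)=A$, $f(v_1)=B$, $f(v_2)=C$, each of their remaining neighbourhoods is $\{1,D\}$, and $E$ is absent from $N^2$ of the triangle. In particular the two five-cycle neighbours of $u_4$ satisfy $\{f(u_5),f(u_8)\}=\{1,D\}$.

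The main work is then a propagation argument around the five-cycle into the second triangle, parallel to the Case~$1$/Case~$2$ split on $\{f(u_5),f(u_{10})\}=\{1,D\}$ in Lemma~\ref{no3--3}, but longer because the five-cycle inserts two extra vertices between the two triangles. Using the forced colours at $u_4,u_5,u_8$, the distance-$\ge 3$ requirement for each $2$-color along the short five-cycle, and the fact that the intact triangle $w_1w_2w_3$ carries the colour $1$ at most once (so its other two vertices are distinct $2$-colors), I would run through the possible colourings of $u_5,u_6,u_7,u_8$ and of the neighbourhood of $w_1$. In each subcase I either locate a colour free for one of $u_1,u_2,u_3$ and extend greedily, or perform a colour swap along the five-cycle (legal precisely because $E$, or some other colour, is locally missing near the triangle) that frees such a colour; either way $f$ extends to $G$, contradicting the choice of $G$.

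The hard part is this middle case analysis: the two rigid triangle-neighbourhoods, transmitted to each other through a five-cycle whose length is just small enough to force overlapping distance constraints, create a large number of subcases indexed by the cyclic positions of $1$ and $D$ on $u_5,\dots,u_8$ and by the three colours around the second triangle. Keeping track of which $2$-colors already occur within distance two of each vertex I wish to recolor, and verifying that each proposed swap remains a valid packing $(1,2^5)$-coloring, is what makes the argument long and is the reason it is deferred to the Appendix. The symmetry exchanging the two triangles (one may equally contract $w_1w_2w_3$) can be used to halve the number of positions that must be examined.
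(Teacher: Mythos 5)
Your framework is the same as the paper's: contract the triangle $u_1u_2u_3$ to a single vertex, apply Lemma~\ref{only-case} and Remark~\ref{neighbor} to reduce to the rigid situation where the three outer neighbours receive distinct $2$-colors $A,B,C$ with remaining neighbourhoods $\{1,D\}$ and $E$ missing from the second neighbourhood of the triangle, and in particular the two five-cycle neighbours of the attachment vertex are coloured $\{1,D\}$ --- exactly the point at which the paper's case split begins.

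The gap is that everything after this setup is asserted rather than proved. The substance of the lemma is precisely the claim you defer: that in every subcase of the rigid situation some recolouring frees a colour for $u_1,u_2,u_3$. In the paper this is several pages of analysis in which the recolourings reach not only the five-cycle and the second triangle $w_1'z_1z_2$ but also their second neighbourhoods ($z_1'$, $z_2'$, $w_2'$, $w_3'$, $w_4'$ and the neighbours of those), and each branch requires verifying that a specific swap preserves the distance-at-least-three condition for every $2$-color it touches. Nothing in your sketch guarantees such a swap exists; reducibility is exactly what has to be demonstrated, and it is not a formality --- the whole discharging strategy hinges on which configurations admit such rescues. Two smaller points: the position where the second triangle attaches to the five-cycle (in the paper, at the vertex adjacent on the five-cycle to the first triangle's attachment vertex) is left unspecified in your labelling, yet it determines the shape of the case analysis; and the proposed symmetry "one may equally contract $w_1w_2w_3$" does not straightforwardly halve the work, since Lemma~\ref{only-case} constrains colourings of the particular graph $G'$ obtained by contracting that triangle, and the two contractions yield different graphs with different colourings.
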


\begin{proof}
The proof is shown in the Appendix. We also checked it using a computer.   
\end{proof}

\section{Proof of Theorem \ref{planar}}

Suppose that Theorem~\ref{planar} is false, i.e., there are subcubic planar graphs that are not packing $(1,2,2,2,2,2)$-colorable. 
Let $G$ be a counterexample with smallest $|V(G)|$. 
Note that $G$ is a connected planar graph.
We proceed by the discharging method. 
According to Euler's formula for connected planar graphs, we use face charging:
\begin{equation}\label{facecharging}
\sum\limits_{v \in V(G)} (2d(v)-6) + \sum\limits_{f \in F(G)} (\ell(f)-6) = -12.
\end{equation}

Let the initial charge of each vertex in $G$ be $Ch(v) = 2d(v) - 6$ and each face in $G$ be $Ch(f) = \ell(f)-6$. Our goal is to redistribute the charges so that the final charge $Ch^*(v)$ for each vertex $v$ and $Ch^*(f)$ for each face $f$ satisfy $Ch^*(v) \ge 0$ and $Ch^*(f) \ge 0$. This will give us $$ \sum\limits_{v \in V(G)} Ch^*(v) + \sum\limits_{f \in F(G)} Ch^*(f) \ge 0,$$ which is a contradiction with Equation~\eqref{facecharging}.

We call a vertex $v$ or a face $f$ \textit{happy} if $Ch^*(v) \ge 0$ or $Ch^*(f) \ge 0$ respectively. Now we define the rules for redistributing the charges.  

\textbf{(i):} Each $3$-face receives charge $1$ from each of its adjacent faces of length at least seven.

\textbf{(ii):} Each $4$-face receives charge $\frac{1}{2}$ from each of its adjacent faces of length at least seven.

\textbf{(iii):} Each $5$-face receives charge $\frac{1}{4}$ from each of its adjacent faces of length at least seven.

We show all the vertices and faces are happy by the following claims.

\begin{claim}\label{6-face}
All the vertices and faces of length at most $6$ are happy.     
\end{claim}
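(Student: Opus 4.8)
The plan is to verify $Ch^*(\cdot)\ge 0$ separately for vertices and for faces of each length $3,4,5,6$, using the reducible configurations established in Lemmas~\ref{no3|3}--\ref{no353} to control the neighbours of every short face. Two preliminary observations keep the bookkeeping clean. First, since $G$ is cubic by Lemma~\ref{cubic}, every vertex has $Ch(v)=2\cdot 3-6=0$ and no rule moves charge to or from vertices, so $Ch^*(v)=0$ and all vertices are trivially happy; likewise a $6$-face has $Ch=6-6=0$ and neither sends nor receives charge, so it is happy at once. Second, I would record that \emph{no two faces of $G$ share more than one edge}: two disjoint shared edges would constitute a $2$-edge-cut (two parallel edges in the planar dual), contradicting Lemma~\ref{no2cut}, whereas two shared edges meeting at a vertex would force the third edge at that vertex to be a cut-edge, contradicting Lemma~\ref{nocutedge}. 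Consequently a face $f$ borders exactly $\ell(f)$ distinct faces, one across each boundary edge, so the rules deliver charge once per edge and the counts below are exact.

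For a $3$-face $f$ we have $Ch(f)=-3$. By Lemmas~\ref{no3|3},~\ref{no3|4},~\ref{no3|5}, and~\ref{no3|6}, none of the three faces sharing an edge with $f$ is a $3$-, $4$-, $5$-, or $6$-face, so all three have length at least $7$ and each sends $1$ by rule~(i); hence $Ch^*(f)=-3+3=0$. For a $4$-face $f$ we have $Ch(f)=-2$, and Lemmas~\ref{no3|4},~\ref{no4|4},~\ref{no4|5}, and~\ref{no4|6} exclude a $3$-, $4$-, $5$-, or $6$-face from its four neighbours; thus each adjacent face has length at least $7$ and sends $\tfrac12$ by rule~(ii), giving $Ch^*(f)=-2+4\cdot\tfrac12=0$.

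The crux is the $5$-face, where $Ch(f)=-1$ and each large neighbour contributes only $\tfrac14$, so I need at least four of the five neighbours to have length at least $7$. Lemmas~\ref{no3|5} and~\ref{no4|5} already forbid a $3$- or $4$-face among the neighbours, so any ``short'' neighbour must be a $5$- or $6$-face. The key input is Lemma~\ref{no555-I}: the configurations ``$5|5|5$'', ``$6|5|5$'', and ``$6|5|6$'' (in both positional variants I and II, covering the two ways the two short faces can sit on the boundary of the central $5$-cycle) are all reducible. Hence a $5$-face cannot have two neighbours of length in $\{5,6\}$, so at most one of its five neighbours is short and at least four have length at least $7$; rule~(iii) then gives $Ch^*(f)\ge -1+4\cdot\tfrac14=0$.

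The main obstacle in this claim is precisely this last step: one must be certain that Lemma~\ref{no555-I} exhausts \emph{every} way a $5$-face could acquire two short neighbours, which is exactly why the two orientations (adjacent versus non-adjacent shared edges on the $5$-cycle) appear as the ``-I''/``-II'' variants of each pattern. Everything else reduces to summing the per-edge contributions, which the preliminary ``at most one shared edge'' observation keeps honest. I expect no difficulty beyond confirming that the enumerated Class Two configurations cover all the positional cases of two short neighbours.
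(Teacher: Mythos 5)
Your proof is correct and follows essentially the same route as the paper: vertices and $6$-faces are trivially happy, $3$- and $4$-faces have all neighbours of length at least $7$ by the corresponding reducibility lemmas, and a $5$-face has at most one neighbour of length at most $6$ by Lemma~\ref{no555-I}, so it collects at least $4\cdot\tfrac14$. Your added observation that no two faces share more than one edge (via Lemmas~\ref{nocutedge} and~\ref{no2cut}) is a harmless refinement that the paper leaves implicit.
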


\begin{proof}
After applying rules \textbf{(i),(ii)}, and \textbf{(iii)}, all the vertices are happy since $G$ is a cubic graph by Lemma \ref{cubic} and the charges on vertices are unchanged by the redistributing rules.
By Lemmas~\ref{no3|3},~\ref{no3|4},~\ref{no3|5}, and~\ref{no3|6}, a $3$-face must be adjacent to three faces of length at least $7$ and therefore receives charge $1$ from each adjacent face. The final charge of a $3$-face is $3-6+3 \cdot 1 = 0$ and thus every $3$-face is happy. By Lemmas~\ref{no3|4},~\ref{no4|4},~\ref{no4|5}, and~\ref{no4|6}, a $4$-face must be adjacent to four faces of length at least $7$ and therefore receives charge $\frac{1}{2}$ from each adjacent face. The final charge is $4-6+4 \cdot \frac{1}{2} = 0$ and thus every $4$-face is happy. For a $5$-face, by Lemmas~\ref{no3|5} and~\ref{no4|5}, it is not adjacent to $3$-faces and $4$-faces. Furthermore, by Lemma~\ref{no555-I}, a $5$-face is adjacent to at most one face of length at most $6$, and thus it receives charge $\frac{1}{4}$ from each of at least four adjacent faces of length $7$ or more. The final charge is at least $5-6+4 \cdot \frac{1}{4} = 0$ and thus every $5$-face is happy. All the $6$-faces are happy since a $6$-face does not give any charge to adjacent faces.  
\end{proof}

\begin{claim}\label{7face}
All the $7$-faces are happy.    
\end{claim}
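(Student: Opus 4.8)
The plan is to start from the initial charge $Ch(F)=\ell(F)-6=1$ of a $7$-face $F$ and to verify that the total charge it donates under rules \textbf{(i)}--\textbf{(iii)} never exceeds $1$. Since $F$ gives $1$ to each adjacent $3$-face, $\frac12$ to each adjacent $4$-face, $\frac14$ to each adjacent $5$-face, and nothing to faces of length at least $6$, it suffices to prove
\[ (\#3\text{-faces}) + \tfrac12(\#4\text{-faces}) + \tfrac14(\#5\text{-faces}) \le 1, \]
the counts being over faces sharing an edge with $F$. The structural fact I would isolate first is a \emph{protection} observation: if a $3$-face or a $4$-face shares an edge $e=xy$ of $F$, then the two faces of $F$ meeting $e$ at $x$ and at $y$ both have length at least $7$. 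Indeed, each such flanking face shares the third (``outgoing'') edge at $x$ (resp.\ $y$) with the small face, and by Lemmas~\ref{no3|3}--\ref{no3|6} (resp.\ Lemmas~\ref{no3|4},~\ref{no4|4},~\ref{no4|5},~\ref{no4|6}) a $3$-face and a $4$-face are adjacent only to faces of length at least $7$. Hence every adjacent $3$- or $4$-face is flanked by large faces.

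Next I would split into two cases according to whether $F$ has an adjacent $3$-face. If it does, I would first argue it has exactly one: two triangles on $F$ would sit on two of the seven edges, and on a $7$-cycle any two edges are separated by at most two edges on one side, so the triangles would share a vertex or edge, or be joined by a path of length $1$ or $2$, contradicting Lemma~\ref{no3|3}, Lemma~\ref{no3-3}, or Lemma~\ref{no3--3}. With the unique triangle flanked by large faces (protection), the non-existence of the configurations ``$3|7|4$'', ``$3|7|5$-I'', ``$3|7|5$-II'' from Lemma~\ref{no374} rules out any adjacent $4$- or $5$-face, so $F$ donates exactly $1$ and $Ch^*(F)=0$.

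In the remaining case $F$ has no adjacent $3$-face; let $b$ and $c$ be its numbers of adjacent $4$- and $5$-faces. Lemma~\ref{no374} forbids ``$7|4|4|4$'', giving $b\le 2$; and since, by Lemma~\ref{no555-I}, a $5$-face has at most one neighbour of length at most $6$, no three adjacent $5$-faces can occur consecutively around $F$, which together with a short counting argument on the seven edges yields $c\le 4$. It then remains to rule out the borderline overloads: ``$7|4|5|4$'' and ``$7|4|4|5$'' force $c=0$ whenever $b=2$ (charge $1$), while ``$7|4|5|5|5$'' forces $c\le 2$ whenever $b=1$ (charge $\le\frac12+\frac12=1$); and when $b=0$ we simply have $c/4\le 1$. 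In every case the donated charge is at most $1$, so $Ch^*(F)\ge 0$. The main work — and the place I expect the real difficulty — lies precisely in matching each ``overloaded'' arrangement of small faces around $F$ to one of the forbidden configurations of Lemma~\ref{no374} and Lemma~\ref{no555-I}: one must check that the admissible relative positions of the small faces are exhausted (this is exactly what the ``-I''/``-II'' variants record), the reductions themselves already being supplied by those lemmas.
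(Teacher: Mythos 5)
Your proof follows essentially the same route as the paper's: at most one adjacent $3$-face (via Lemmas~\ref{no3|3},~\ref{no3-3},~\ref{no3--3}), then a case split on the number of adjacent $4$-faces, with the borderline arrangements excluded by the Class One configurations of Lemma~\ref{no374} and the $5$-face bound coming from Lemma~\ref{no555-I}, so the donated charge is at most $1$ in every case. The only (harmless) imprecision is in the $3$-face case: the triangle and a $4$-face sitting at distance one around the $7$-face is excluded by Lemma~\ref{no3-4} rather than by the ``$3|7|4$'' configuration, which records only the remaining relative position — the paper accordingly cites Lemmas~\ref{no3|4},~\ref{no3|5} and~\ref{no3-4} alongside Lemma~\ref{no374} there.
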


\begin{proof}
For a $7$-face $f$, it can be adjacent to at most one $3$-face by Lemma~\ref{no3|3},~\ref{no3-3}, and~\ref{no3--3}. 

\textbf{Case 1:} $f$ is adjacent to exactly one $3$-face. 
Furthermore, by Lemmas~\ref{no3|4},~\ref{no3|5},~\ref{no3-4}, and~\ref{no374}, it cannot be adjacent to any other faces of length at most $5$. Therefore, $f$ is happy since the final charge for $f$ is $7-6-1 = 0$. 

\textbf{Case 2:} $f$ is not adjacent to any $3$-faces. 

\textbf{Case 2.1:} $f$ is adjacent to at least one $4$-face. By Lemmas~\ref{no4|4} and \ref{no374}, $f$ can be adjacent to at most two $4$-faces. If $f$ is adjacent to two $4$-faces, then by Lemmas~\ref{no4|4},~\ref{no4|5} and~\ref{no374}, the other faces that $f$ is adjacent to are of length at least $6$. Thus, the final charge for $f$ is $7-6-2 \cdot \frac{1}{2} = 0$ and $f$ is happy. So $f$ is adjacent to one $4$-face. By Lemmas~\ref{no4|5},~\ref{no555-I}, and~\ref{no374}, $f$ can be adjacent to at most two faces of length $5$. Thus, the final charge of $f$ is at least $7-6-\frac{1}{2} - 2 \cdot \frac{1}{4} = 0$ and $f$ is happy. 

\textbf{Case 2.2:} $f$ is not adjacent any $3$-faces or $4$-faces. By Lemma~\ref{no555-I}, $f$ can be adjacent to at most four $5$-faces. Therefore, the final charge of $f$ is at least $7-6- 4 \cdot \frac{1}{4} = 0$ and $f$ is happy.  
\end{proof}

\begin{claim}\label{8face}
All the $8$-faces are happy.    
\end{claim}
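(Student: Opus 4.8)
The plan is to show that the initial charge $2=\ell(f)-6$ of an $8$-face $f$ is never overspent by rules (i)--(iii), i.e. that the total charge $f$ sends out is at most $2$. Write the boundary of $f$ as a cycle $w_1w_2\cdots w_8$ (it is a cycle since $G$ has no cut-vertex, by Lemma~\ref{nocutedge}), and for each $i$ let $g_i$ be the face sharing the edge $w_iw_{i+1}$ with $f$ (indices mod $8$). Since $G$ is $3$-edge-connected by Lemmas~\ref{nocutedge} and~\ref{no2cut}, the faces $g_1,\dots,g_8$ are distinct and each shares a single edge with $f$. The charge $f$ sends across $w_iw_{i+1}$ is $1,\tfrac12,\tfrac14,0$ according as $g_i$ is a $3$-, $4$-, $5$-, or ($\ge 6$)-face, and I must bound the sum of these by $2$.

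First I would establish the key local fact: \emph{two faces on consecutive edges cannot both have length $\le 5$ unless both are $5$-faces.} The point is that the common vertex $w_{i+1}$ has degree $3$ by Lemma~\ref{cubic}, so its third edge $w_{i+1}z$ is shared by $g_i$ and $g_{i+1}$; hence if both are small they share an edge, and Lemmas~\ref{no3|3},~\ref{no3|4},~\ref{no3|5},~\ref{no4|4},~\ref{no4|5} rule out every combination except $5$-$5$. In particular every $3$- and every $4$-face among the $g_i$ is flanked by two faces of length $\ge 6$. I would then record two consequences. By Lemma~\ref{no555-I} a $5$-face is adjacent to at most one face of length $\le 6$, so along $\partial f$ the $5$-faces occur in runs of length at most $2$, each run flanked by large faces. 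By Lemmas~\ref{no3|3},~\ref{no3-3},~\ref{no3--3} any two $3$-faces on $\partial f$ lie at cyclic edge-distance at least $4$, so there are at most two of them, and if two they are antipodal.

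With these in hand I would split on the number $t$ of $3$-faces on $\partial f$. If $t=0$, a counting argument finishes: letting $a,b$ be the numbers of $4$- and $5$-faces, every $4$-face and every $5$-run is immediately followed by a large edge, and these large edges are distinct, so the number of large edges is at least the number of $4$-faces plus the number of $5$-runs; counting the $8$ edges gives $2a+b\le 8$, whence the charge $\tfrac{a}{2}+\tfrac{b}{4}=\tfrac{2a+b}{4}\le 2$. If $t=1$, the triangle sends out $1$ and forces its two neighbours to be large; moreover its two edges at distance $2$ cannot carry a $4$-face by Lemma~\ref{no3-4}, and a short analysis of the remaining five boundary edges (using the same isolation/run rules) shows they send out at most $1$, for a total of $2$. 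If $t=2$ the two triangles are antipodal, their four neighbouring edges are large, and for the two remaining edges (each at distance $2$ from \emph{both} triangles) a $4$-face is excluded by Lemma~\ref{no3-4} and a $5$-face by Lemma~\ref{no353}, since it would form a ``$3$-$5$-$3$''; hence everything but the triangles is large and the charge is exactly $2$.

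The main obstacle I anticipate is the two-triangle case: ruling out a $5$-face squeezed between the two antipodal triangles is precisely the configuration ``$3$-$5$-$3$'', the most delicate reducible configuration (Lemma~\ref{no353}, proved separately in the Appendix). The remainder is bookkeeping, but one must be careful that the ``shared third edge'' argument genuinely yields the edge-sharing needed to invoke Lemmas~\ref{no3|3}--\ref{no4|5}, and that the bound on $5$-face runs really follows from Lemma~\ref{no555-I}; these are the places where a careless argument could hide a gap.
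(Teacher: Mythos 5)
Your proposal is correct and follows essentially the same route as the paper: a case split on the number of $3$-faces on the boundary (with the antipodal two-triangle case killed by Lemmas~\ref{no3-4} and~\ref{no353}, and the remaining cases handled by bounding the $4$- and $5$-faces via Lemmas~\ref{no4|4},~\ref{no4|5},~\ref{no555-I}). Your counting $2a+b\le 8$ in the triangle-free case is just a repackaging of the paper's inequality $2y+\tfrac{3}{2}z\le 8$, and the "shared third edge at a degree-$3$ vertex" observation is exactly the adjacency fact the paper uses implicitly throughout.
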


\begin{proof}
For a $8$-face $f$, by Lemmas~\ref{no3|3},~\ref{no3-3} and~\ref{no3--3}, $f$ can be adjacent to at most two $3$-faces. 

\textbf{Case 1:} $f$ is adjacent to exactly two $3$-faces. Then they must be in opposite positions. Furthermore, by 
Lemmas~\ref{no3|4},~\ref{no3|5},~\ref{no3-4} and~\ref{no353}, the other faces that $f$ is adjacent to are of length at least $6$. Thus, the final charge of $f$ is $8-6-2 \cdot 1 = 0$.

\textbf{Case 2:} $f$ is adjacent to exactly one $3$-face. By Lemmas~\ref{no3|4},~\ref{no3-4} and~\ref{no4|4}, $f$ can be adjacent to at most two $4$-faces. If $f$ is adjacent to two $4$-faces, then it cannot be adjacent to any $5$-faces by Lemma~\ref{no4|5}. Thus, the final charge of $f$ is $8-6-1-2 \cdot \frac{1}{2} = 0$ and $f$ is happy. If $f$ is adjacent to one additional $4$-face,  then by Lemma~\ref{no4|5} again, it can be adjacent to at most two $5$-faces. Thus, the final charge of $f$ is at least $8-6-1-\frac{1}{2}-2 \cdot \frac{1}{4} = 0$ and $f$ is happy. Thus, $f$ is not adjacent to any $4$-face. By Lemma~\ref{no555-I}, it can be adjacent to at most four $5$-faces. Thus, the final charge of $f$ is at least $8-6-1-4 \cdot \frac{1}{4} = 0$ and $f$ is happy.

\textbf{Case 3:} $f$ is not adjacent to any $3$-face. Then $f$ can be adjacent to at most four $4$-faces by Lemma~\ref{no4|4}. Let $y$ be the number of $4$-faces that $f$ is adjacent to and $z$ be the number of $5$-faces that $f$ is adjacent to. By Lemma~\ref{no4|4},
we have $0 \le y \le 4$. By Lemma~\ref{no555-I}, we have $0 \le z \le 5$. By Lemma~\ref{no4|5}, we have $2y + \frac{3}{2} z \le 8$. Thus, the final charge is at least $$8-6-y \cdot \frac{1}{2} - z \cdot \frac{1}{4} = 2 - \frac{1}{4}(2y+z) \ge 2 - \frac{1}{4}(2y+\frac{3}{2}z) = 0,$$
and $f$ is happy.
\end{proof}

\begin{claim}\label{9face}
All the $9$-faces are happy.    
\end{claim}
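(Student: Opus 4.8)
The plan is to follow the template of Claims~\ref{7face} and~\ref{8face}. A $9$-face $f$ begins with charge $9-6=3$ and, by rules \textbf{(i)}--\textbf{(iii)}, only ever sends charge to adjacent faces of length $3,4,5$, never receiving any. Writing $x,y,z$ for the numbers of adjacent $3$-, $4$-, and $5$-faces, the final charge is $Ch^*(f)=3-x-\tfrac12 y-\tfrac14 z$, so it suffices to prove $x+\tfrac12 y+\tfrac14 z\le 3$. I would first record that any two triangles meeting $f$ are separated along the boundary walk by at least three edges: adjacency forces a shared edge (Lemma~\ref{no3|3}), one intervening edge gives the configuration of Lemma~\ref{no3-3}, and two give that of Lemma~\ref{no3--3}. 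On a $9$-cycle this yields $4x\le 9$, hence $x\le 2$.

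The technical core is the estimate that along the whole boundary (when $x=0$) one has $2y+\tfrac32 z\le 9$, and along an arc of $m$ consecutive edges whose two flanking faces have length at least $6$ one has $2y'+\tfrac32 z'\le m+1$ for the $4$- and $5$-faces $y',z'$ meeting the arc. I would prove this by grouping the $4$- and $5$-faces into maximal blocks of faces placed at consecutive edges of $f$. By Lemmas~\ref{no4|4} and~\ref{no4|5} a $4$-face cannot sit next to another $4$- or $5$-face, so every block meeting a $4$-face is a single $4$-face; by Lemma~\ref{no555-I} three $5$-faces cannot occur consecutively and a consecutive pair of $5$-faces must be flanked on both sides by faces of length at least $7$, so each $5$-block is a single $5$-face or a pair. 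Assigning to each block the interval made of its own edges plus one half-edge of buffer on each side, the intervals are pairwise disjoint (distinct blocks are separated by at least one edge), and a single $4$-face, a single $5$-face, and a pair of $5$-faces have interval lengths $2,2,3$ while carrying weights $2,\tfrac32,3$; thus the total weight $2y'+\tfrac32 z'$ is at most the total interval length.

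With this in hand the three cases close comfortably. If $x=0$, then $2y+\tfrac32 z\le 9$, so $\tfrac12 y+\tfrac14 z=\tfrac14(2y+z)\le\tfrac14(2y+\tfrac32 z)\le\tfrac94$ and $Ch^*(f)\ge 3-\tfrac94>0$. If $x=1$, the two edges flanking the triangle cannot border a $3$-, $4$-, or $5$-face (Lemmas~\ref{no3|3},~\ref{no3|4},~\ref{no3|5}, together with Lemma~\ref{no3-4} ruling out a triangle and a $4$-face joined by a single edge), so the $4$- and $5$-faces occupy an arc with $m=6$ interior edges flanked by $\ge 6$-faces; then $2y+\tfrac32 z\le 7$ and $Ch^*(f)\ge 3-1-\tfrac14\cdot 7=\tfrac14>0$. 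If $x=2$, the two gaps between the triangles have sizes $3$ and $4$, the four edges flanking the triangles again border $\ge 6$-faces, the lone interior edge of the size-$3$ gap would create a forbidden $3$-$4$ join (Lemma~\ref{no3-4}) or a forbidden $3$-$5$-$3$ configuration (Lemma~\ref{no353}), and the size-$4$ gap admits at most a pair of $5$-faces contributing at most $\tfrac12$; hence $Ch^*(f)\ge 3-2-\tfrac12>0$.

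The main obstacle is the block inequality, and in particular the coefficient $\tfrac32$ for $5$-faces: unlike $4$-faces, two $5$-faces may share an edge, so the packing is controlled by the finer content of Lemma~\ref{no555-I}, namely that a $5$-face has at most one neighbour of length at most $6$ (equivalently, no $5|5|5$, $6|5|5$, or $6|5|6$). Getting this right is exactly what forces a consecutive pair of $5$-faces to be flanked by $\ge 7$-faces and keeps the interval accounting disjoint. The only other delicate point is the carving of the sub-arcs when $x=1,2$, where I must verify that the edges immediately around each triangle, and the short gap between two triangles, are blocked to $4$- and $5$-faces by Lemmas~\ref{no3-4} and~\ref{no353}; the remaining arithmetic is routine and leaves a strict surplus in every case.
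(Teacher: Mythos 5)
Your proof is correct and follows essentially the same route as the paper: bound the number of adjacent $3$-faces by two, split into the cases $x=0,1,2$, use the forbidden configurations around each triangle (Lemmas~\ref{no3|4},~\ref{no3|5},~\ref{no3-4},~\ref{no353}) to clear the flanking edges and the short gap, and close each case with the weighted inequality $2y+\tfrac{3}{2}z\le m+1$. Your block-and-interval argument is simply a fully justified version of the inequality that the paper asserts by citing Lemmas~\ref{no4|4},~\ref{no4|5}, and~\ref{no555-I}, so the two proofs coincide in substance (and your arithmetic in the two-triangle case, giving surplus $\tfrac12$ rather than the paper's stated $0$, is in fact the correct evaluation).
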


\begin{proof}
For a $9$-face $f$, by Lemmas~\ref{no3|3},~\ref{no3-3} and~\ref{no3--3}, $f$ can be adjacent to at most two $3$-faces. 

\textbf{Case 1:} $f$ is adjacent to exactly two $3$-faces. The two $3$-faces must have distance exactly $3$. By Lemmas~\ref{no3|4} and~\ref{no3-4}, $f$ cannot be adjacent to any $4$-face. By Lemmas~\ref{no3|5} and~\ref{no353}, $f$ can be adjacent to at most two more $5$-faces. Therefore, the final charge is at least $9-6-2 \cdot 1 - \frac{1}{4} \cdot 2 = 0$. 

\textbf{Case 2:} $f$ is adjacent to exactly one $3$-face. Let $y$ be the number of $4$-faces that $f$ is adjacent to and $z$ be the number of $5$-faces that $f$ is adjacent to. By Lemmas~\ref{no3|4},~\ref{no3|5},~\ref{no3-4},~\ref{no4|4},~\ref{no4|5} and~\ref{no555-I}, we know $0 \le y \le 2$, $0 \le z \le 4$, and $2y + \frac{3}{2} z \le 7$. Therefore, the final charge is at least $$9-6-1- y\cdot \frac{1}{2} - z \cdot \frac{1}{4}  = 2 - \frac{1}{4}(2y+z) \ge 2 - \frac{7}{4} = \frac{1}{4} > 0,$$
and $f$ is happy.

\textbf{Case 3:} $f$ is not adjacent to any $3$-face. Then $f$ can be adjacent to at most four $4$-faces by Lemma~\ref{no4|4}. Let $y$ be the number of $4$-faces that $f$ is adjacent to and $z$ be the number of $5$-faces that $f$ is adjacent to. By Lemma~\ref{no4|4},
we have $0 \le y \le 4$. By Lemma~\ref{no555-I}, we have $0 \le z \le 6$. By Lemma~\ref{no4|5}, we have $2y + \frac{3}{2} z \le 9$. Thus, the final charge is at least $$9-6-y \cdot \frac{1}{2} - z \cdot \frac{1}{4} = 3 - \frac{1}{4}(2y+z) \ge 3 - \frac{9}{4} = \frac{3}{4} > 0,$$
and $f$ is happy.
\end{proof}

\begin{claim}\label{10+face}
All the faces of length at least $10$ are happy.    
\end{claim}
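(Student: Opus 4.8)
The plan is to close the discharging argument by showing that every face $f$ of length $\ell = \ell(f) \ge 10$ is happy. Under rules \textbf{(i)}--\textbf{(iii)} only faces of length at most $5$ receive charge, so $f$ merely sends charge out, and its final charge is
\[
Ch^*(f) = (\ell-6) - x - \tfrac{1}{2}y - \tfrac{1}{4}z ,
\]
where $x,y,z$ count the $3$-, $4$-, and $5$-faces adjacent to $f$. Since $\ell-6 \ge 4$, it suffices to prove $x + \tfrac12 y + \tfrac14 z \le \ell - 6$.

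First I would extract two facts from the boundary walk of $f$. By Lemmas~\ref{no3|3},~\ref{no3|4}, and~\ref{no3|5}, the two edges of $f$ flanking any adjacent $3$-face border faces of length at least $6$; and by Lemmas~\ref{no3|3},~\ref{no3-3}, and~\ref{no3--3}, two adjacent $3$-faces are separated by at least three boundary edges. Hence each $3$-face together with the separating edges before the next one consumes at least four boundary edges, so that
\[
x \le \ell/4 .
\]
Second, I would prove the weighted budget inequality
\[
2x + 2y + \tfrac{3}{2}z \le \ell ,
\]
which is the uniform form of the estimates used in Claims~\ref{8face} and~\ref{9face}. To do this I assign to each $3$-face and each $4$-face a boundary arc of length $2$, and to each maximal run of consecutive $5$-faces an arc whose length is one more than the number of $5$-faces in the run. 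The minimum separations forced by Lemmas~\ref{no3|4},~\ref{no3-4} (a $3$-face and a $4$-face), \ref{no4|4} (two $4$-faces), and~\ref{no4|5} (a $4$-face and a $5$-face) make these arcs pairwise disjoint, so their total length is at most $\ell$; and since Lemma~\ref{no555-I} bounds every run of $5$-faces by two, the number $z$ of $5$-faces is at most twice the number of runs, whence the total weight $2x+2y+\tfrac32 z$ does not exceed the total arc length, giving the displayed inequality.

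The conclusion is then arithmetic. Using $z \le \tfrac32 z$ together with the budget inequality,
\[
\tfrac12 y + \tfrac14 z = \tfrac14(2y+z) \le \tfrac14\bigl(2y+\tfrac32 z\bigr) \le \tfrac14(\ell-2x) ,
\]
so that
\[
Ch^*(f) \ge (\ell-6) - x - \tfrac14(\ell-2x) = \tfrac{3\ell}{4} - 6 - \tfrac{x}{2} \ge \tfrac{3\ell}{4} - 6 - \tfrac{\ell}{8} = \tfrac{5\ell}{8} - 6 ,
\]
and $\tfrac{5\ell}{8}-6 \ge \tfrac14 > 0$ for every $\ell \ge 10$. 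Thus $f$ is happy, which completes Claim~\ref{10+face} and, together with the previous claims, the entire discharging argument.

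The step I expect to be the main obstacle is the budget inequality, specifically the $5$-face contribution. Lemma~\ref{no555-I} permits a pair of consecutive $5$-faces but forbids a triple, so a run of two must still be allotted its tight arc of length three; and because a $5$-face may lie just one edge from a $3$-face, one must verify that the arc assignment remains disjoint there, which is where the chain interactions governed by Lemma~\ref{no353} reappear. I would organize this as a short case analysis over the maximal runs of small faces along $\partial f$, paralleling exactly the already-completed cases $\ell \in \{7,8,9\}$, so that no new reducible configuration is required.
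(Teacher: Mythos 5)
Your proposal is correct, but it reaches the conclusion by a genuinely different route from the paper. The paper never proves a single inequality tying all three face types together: it records only the pairwise bounds $4x\le\ell$, $3x+2y\le\ell$, $2x+\tfrac32 z\le\ell$, $2y+\tfrac32 z\le\ell$ and then takes the convex combination $x+\tfrac12 y+\tfrac14 z=\tfrac{7}{30}(3x+2y)+\tfrac{3}{20}(2x+\tfrac32 z)+\tfrac{1}{60}(2y+\tfrac32 z)\le\tfrac25\ell$, giving $Ch^*(f)\ge\tfrac35\ell-6\ge 0$. Your budget inequality $2x+2y+\tfrac32 z\le\ell$ is strictly stronger than (and not a consequence of) those pairwise bounds --- for instance $x=y=z=\ell/5$ satisfies all of the paper's inequalities but gives $2x+2y+\tfrac32 z=1.1\,\ell$ --- so it genuinely requires the disjoint-arc argument you sketch. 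That argument does go through: every gap between consecutive small faces or $5$-runs along $\partial f$ has length at least one by Lemmas~\ref{no3|5},~\ref{no4|4},~\ref{no4|5} (at least two for a $3$--$4$ pair by Lemma~\ref{no3-4}, at least three for a $3$--$3$ pair by Lemma~\ref{no3--3}), each item needs only one extra boundary edge, and Hall's condition for choosing distinct extra edges is immediate since any block of $m$ consecutive items sees at least $m$ adjacent gap edges. Note that Lemma~\ref{no353} is not actually needed here (the paper invokes it only for $8$- and $9$-faces); a gap of one edge between a $3$-face and a $5$-run is harmless for the disjointness. In short, your route buys a cleaner final estimate ($\tfrac58\ell-6$ versus $\tfrac35\ell-6$) and a single unified packing lemma, at the cost of an extra combinatorial argument along the boundary walk; the paper's choice of coefficients sidesteps that argument entirely by working only with the pairwise exclusions already furnished by the reducibility lemmas.
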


\begin{proof}
For a face of length $\ell \ge 10$. Let $x,y,z$ be the number of $3,4,5$-faces that $f$ is adjacent to respectively. By Lemmas~\ref{no3|3},~\ref{no3-3},~\ref{no3--3},~\ref{no3|4},~\ref{no3|5},~\ref{no3-4},~\ref{no4|4},~\ref{no4|5} and~\ref{no555-I}, we have $0 \le 4x \le \ell$, $0 \le 2y \le \ell$, $0 \le \frac{3}{2}z \le \ell$, $3x+2y \le \ell$, $2x + \frac{3}{2}z \le \ell$, and $2y+ \frac{3}{2}z \le \ell$. Thus, the final charge is at least $$\ell - 6 - x - y \cdot \frac{1}{2} - z \cdot \frac{1}{4} = \ell - 6 - \frac{7}{30} (3x+2y) - \frac{3}{20} (2x+\frac{3}{2}z) - \frac{1}{60} (2y+\frac{3}{2}z)$$
$$\ge \ell - 6 - (\frac{7}{30}+\frac{3}{20}+\frac{1}{60}) \cdot \ell = \frac{3}{5} \ell - 6 \ge 0,$$
since $\ell \ge 10$. Therefore, all faces of length at least $10$ are happy.
\end{proof}

By Claims~\ref{6-face},~\ref{7face},~\ref{8face},~\ref{9face} and~\ref{10+face}, it is a contradiction to~\eqref{facecharging} since all the vertices and faces are happy. 
This completes the proof of Theorem \ref{planar}.
\hfill \qed


\section{Concluding Remarks}

We believe the answer to Question 2 is affirmative and therefore posed the following conjecture.

\begin{conj}
Every subcubic graph except the Petersen graph is packing $(1,2^5)$-colorable.
\end{conj}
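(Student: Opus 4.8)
The plan is to run a minimal-counterexample argument parallel to the proof of Theorem~\ref{planar}, but to replace the planar discharging step — which has no analogue without an embedding — by a global argument valid for arbitrary subcubic graphs. First I would take a counterexample $G$ of smallest order. The reductions in Lemmas~\ref{cubic},~\ref{nocutedge}, and~\ref{no2cut} are purely local color-extension arguments and use no planarity, so they carry over verbatim: $G$ is cubic and $2$-connected. In particular every vertex of $G$ is then heavy, which is exactly the case left open by Mortada and Togni~\cite{MT2}; thus the entire difficulty is concentrated in the cubic case, and citing~\cite{MT2} or~\cite{GT2} does not finish the job.

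Next I would re-examine the short-cycle reducibility lemmas (Lemmas~\ref{no3|3}--\ref{no4|6}). Their coloring extensions are again local, but several of them invoke planarity to forbid chords or common neighbours among the boundary vertices via separating-cycle arguments. In the general setting these forbidden adjacencies may occur, so each such lemma must be reproved with the extra configurations included. I expect the extensions still to go through — the additional identifications only shrink the second neighbourhood and hence increase the number of available colors — but at the cost of substantially more casework. The outcome I would aim for is the same structural conclusion as in the planar proof: $G$ has girth at least $5$ (no triangles and no $4$-cycles survive), together with the sparsity of $5$-cycles encoded by Lemma~\ref{no555-I}.

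The hard part, and the genuine obstacle, is the final global step. In the planar proof the contradiction comes entirely from Euler's formula~\eqref{facecharging}: the single identity $\sum_v(2d(v)-6)+\sum_f(\ell(f)-6)=-12$ converts local reducibility into a global deficit. No such identity is available for non-planar cubic graphs, so reducibility must be turned into a global bound by other means. I would attempt this along two complementary routes. For the high-girth regime I would use a probabilistic argument: since $G^2$ has maximum degree at most $6$, I would first extract a large independent set for the $1$-color (a Brooks-type or greedy choice) and then distribute the remaining vertices into five $2$-packings by a semi-random ``nibble'' or Lov\'asz Local Lemma coloring, the sparse dependencies of $G^2$ giving enough slack once the girth exceeds a fixed constant $g$. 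For the low-girth regime (girth $5$ with short cycles densely present) I would use the reducibility lemmas to bound $|V(G)|$, reducing to a finite family to be eliminated by structural analysis.

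Isolating the Petersen graph as the unique survivor is the crux of this second route: its diameter is $2$, so every $2$-packing is a single vertex, and five $2$-colors together with an independent set of size at most its independence number $4$ cover only $9<10$ vertices — this is precisely the obstruction. I would then need to show that any other cubic graph escaping all the reductions either has enough room (large diameter, handled probabilistically) or can itself be reduced. Threading the Local Lemma parameters so that the split is exactly one ordinary independent set and five $2$-packings, while simultaneously matching the girth threshold $g$ to what the reductions can guarantee and ruling out every small cubic graph other than Petersen, is where I expect the main technical difficulty to lie.
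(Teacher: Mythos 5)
You have picked the paper's concluding \emph{conjecture}, not a theorem: the paper proves only the planar case (Theorem~\ref{planar}) and explicitly leaves the general statement open, so there is no proof of record to compare against. Judged on its own terms, your text is a research programme rather than a proof, and two of its steps fail concretely as written. First, your case split does not cover all graphs. The reducibility lemmas forbid local configurations but say nothing about $|V(G)|$: a non-planar cubic graph of girth exactly $5$ — below any fixed threshold $g$ you might match to the probabilistic regime — can be arbitrarily large while containing none of the forbidden configurations, since without an embedding there is no analogue of~\eqref{facecharging} to convert local reducibility into a global density contradiction. That identity is the entire engine of the planar proof, so your claim that in the low-girth regime "the reducibility lemmas bound $|V(G)|$" is simply false, and the intermediate regime (bounded girth, unbounded order) is handled by neither of your routes. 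Relatedly, your assertion that the non-planar analogues of Lemmas~\ref{no3|3}--\ref{no4|6} "still go through" because identifications "only shrink the second neighbourhood" is unreliable: identifications shorten distances in the auxiliary graph $G'$, and arguments such as Lemma~\ref{only-case} depend on $|\{v_1,v_2,v_3\}|=3$ and on specific non-adjacencies that planarity (via the separating-cycle and $2$-cut analysis) guarantees.

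Second, the probabilistic step rests on a wrong parameter and an unsupported hope. In a cubic graph a vertex has up to $3+3\cdot 2=9$ vertices within distance two, so $\Delta(G^2)\le 9$, not $6$ as you state. More seriously, no known Lov\'asz Local Lemma or nibble argument yields a packing $(1,2^5)$-coloring of cubic graphs of large girth: a direct LLL with one $1$-color of probability $p_1$ and five $2$-colors of probability $p_2$ (with $p_1+5p_2=1$) produces monochromatic-pair bad events whose probabilities are far too large relative to the constant dependency degree, and the "high girth gives slack" heuristic is exactly what fails for packing-type colorings — Balogh, Kostochka, and Liu~\cite{BKL1} show the packing chromatic number of cubic graphs is unbounded even under girth restrictions. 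Finally, isolating the Petersen graph cannot be done by your counting remark alone: Petersen is cubic, $2$-connected, of girth $5$, and passes every local reduction that survives in the non-planar setting, so any minimal-counterexample scheme must contain a mechanism that terminates at Petersen specifically, which your proposal does not supply. In short, the conjecture remains open, and your sketch does not close either of its two genuine obstacles: the absence of a global counting identity off the plane, and the absence of any probabilistic tool strong enough for five $2$-colors.
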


\section*{Acknowledgement}

The authors would like to thank Bernard Lidick\'{y} for the helpful discussion. The C++ program in the Appendix is adapted from an early version of him.

\section{Appendix}


\subsection{Class One configurations}

\textbf{Proof of Lemma~\ref{no374} for configuration ``$3|7|4$'':}
Let $u_1, u_2, u_3$ be a triangle and $u_2, u_3, u_4, u_5, u_6, u_7, u_8$ be a seven cycle such that $u_2u_3$ is shared by the triangle and the seven cycle. Let $N(u_1) = \{u_2, u_3, u_9\}$, $N(u_4) = \{u_3, u_5, u_{10}\}$, $N(u_5) = \{u_4, u_6, u_{11}\}$, $N(u_6) = \{u_5, u_7, u_{12}\}$, $N(u_7) = \{u_6, u_8, u_{13}\}$, and $N(u_8) = \{u_4, u_7, u_{14}\}$. By Lemma~\ref{no3-4}, we may assume that $u_{11}u_{12} \in E(G)$ and let $N(u_{11}) = \{u_5, u_{12}, u_{15}\}$. We delete $u_1, u_2, u_3$ and add a vertex $u$ with the edges $uu_9, uu_8, uu_4$ to obtain a subcubic planar graph $G'$. By the minimality of $G$, $G'$ has a good coloring $f$. We extend $f$ to $G$.

By Lemma~\ref{no3-4}, we only need to consider the case when $1 \notin \{f(u_4), f(u_8), f(u_{9})\} $. Say $f(u_4) = A$, $f(u_9) = B$, and $f(u_8) = C$. We may assume $\{1, D\} = \{f(u_5), f(u_{10})\}$, $\{1, D\} = \{f(u_7), f(u_{14})\}$, $1, D \in f(N(u_9))$, $\{B, C\} \subseteq f(N^2(u_4))$. Moreover, if $B \notin f(N^2(u_5))$, then we can recolor $u_5$ with $B$ and $u_4$ with $1$, which contradicts out assumption that $f(u_4) \neq 1$. Thus, we may assume that $B \in f(N^2(u_5))$. Similarly, we assume $C, E \in f(N^2(u_5))$. 


 \textbf{Case 1:} $f(u_5) = 1$ and $f(u_{10}) = D$. If $A \notin f(N^2(u_5))$, then we can switch the colors of $u_4$ and $u_5$, which contradicts our assumption that $f(u_4) \neq 1$. Thus, we may assume that $A \in \{f(u_{12}), f(u_{15})\}$. Note that $1 \in f(N(u_{10}))$, since otherwise we recolor $u_{10}$ with $1$ and it contradicts our assumption that $\{f(u_5), f(u_{10})\} = \{1, D\}$. 

 \textbf{Case 1.1:} $f(u_6) \in \{B, D\}$. We know $f(u_{11}) \neq 1$.  Since $A, C, E \in f(N^2(u_5))$ and $f(u_7) \in \{1, D\}$, we switch the colors of $u_5$ and $u_{11}$, recolor $u_4$ with $1$ to obtain a contradiction.

 \textbf{Case 1.2:} $f(u_6) = E$. We obtain a contradiction by switching $B$ with $E$ in the argument of Case 1.1.

%

 \textbf{Case 2:} $f(u_5) = D$ and $f(u_{10}) = 1$. Then $f(u_7) = 1$ and $f(u_6) \neq 1$. We have $f(u_{11}) = 1$ since otherwise we recolor $u_5$ with $1$ and it contradicts our case. We know $f(u_6) \in \{B, E\}$. 

If $f(u_6) = E$ then since $\{B, C\} \subseteq f(N^2(u_5))$ we have $\{f(u_{12}), f(u_{15})\} = \{C, E\}$. Since $\{B, C\} \subseteq f(N^2(u_4))$, we know $f(N(u_{10})) = \{A, B, C\}$ and thus can switch the colors of $u_4$ and $u_5$, color $u_3$ with $E$, $u_2$ with $A$, and $u_1$ with $1$. Thus, we have $f(u_6) = B$. Let $N(u_{13}) = \{u_7, u_{16}, u_{17}\}$. Then we know $B \in \{f(u_{16}), f(u_{17})\}$ since other wise we switch the colors of $u_6$ and $u_7$, which contradicts our assumption that $f(u_7) = 1$. Similarly, $C \in \{f(u_{16}), f(u_{17})\}$. Moreover, $f(u_{13}) = A$ since otherwise we recolor $u_7$ with $A$ and it contradicts our assumption that $f(u_7) = 1$. Then we switch the colors of $u_7$ and $u_{13}$, a contradiction again. \hfill \qed


\subsection{Class Two configurations}

We use $A(u)$ to denote the number of available colors that $u$ has from $A,B,C,D,E$.

\noindent\textbf{Proof of Lemma~\ref{no555-I} for configuration ``$5|5|5$-I'':}
Suppose not, i.e., there is such a structure in our graph $G$. Let $u_1u_2u_3u_4u_5$ be the five cycle $C$ in the middle and $v_i$ be the neighbours of $u_i$ that do not belong to the cycle $C$, where $i \in [5]$. Let $N(v_i) = \{u_i, v_i', v_i''\}$, where $i \in [5]$, $v_3'' = v_2''$, and $v_4'' = v_5''$. Let $N(v_5') = \{v_5, w_5', w_5''\}$, $N(v_4'') = \{v_4, v_5, w_4\}$, $N(w_4) = \{w_4', w_4'', v_4''\}$, and $N(v_4') = \{v_4, z_4', z_4''\}$. We delete $u_1, u_2, u_3, u_4, u_5$ from $G$ and add the edges $v_2v_5, v_3v_4, v_2''v_4''$ to form $G'$. By the minimality of $G$, $G'$ has a good coloring $f$. We extend $f$ to $G$.

 \textbf{Case 1:} $v_1 = 1$.  We discuss in cases on the number of vertices from $v_2, v_3, v_4, v_5$ that is colored by $1$.

 \textbf{Case 1.1:} Two of $v_2, v_3, v_4, v_5$ are colored by $1$. By symmetry, we have the following cases.

 \textbf{Case 1.1.1:} $f(v_2) = f(v_3) = 1$. Say $f(v_4) = A$ and $f(v_5) = B$. Then $|A(u_1)| \ge 2, |A(u_2)| \ge 3, |A(u_3)| \ge 2, |A(u_4)| \ge 1, |A(u_5)| \ge 1$. We discuss cases that $u_1, u_2, u_3, u_4, u_5$ cannot be colored.

 \textbf{Case 1.1.1.1:} $|A(u_4) \cup A(u_5)| = 1$. Say  $A(u_4) = A(u_5) = \{E\}$. Then either $|A(u_1) \cup A(u_3) \cup A(u_4) \cup A(u_5)| = 2$ or $|A(u_1) \cup A(u_3) \cup A(u_4) \cup A(u_5)| = |A(u_1) \cup A(u_2) \cup A(u_3) \cup A(u_4) \cup A(u_5)| = 3$, since otherwise we are done by SDR (and recolor one of $u_4, u_5$ by $1$). In the former case, $A(u_1) = A(u_3) = \{C,E\} \text{ or  } \{D,E\}$. Since $v_2v_5$ is an edge in $G'$, we know $B \in A(v_2)$. We recolor $v_4$ with $1$ and color $u_1, u_2, u_3, u_4, u_5$ with $C (D),B,E,A,1$, which is a contradiction. In the latter case,  we again know $B \in A(u_2)$ and thus $A(u_1) \cup A(u_3) \cup A(u_4) \cup A(u_5) = \{B,E,A(C,D)\}$. Since $B \notin A(u_1)$, $A(u_1) = \{E,A(C,D)\}$. By $|A(u_1) \cup A(u_3) \cup A(u_4) \cup A(u_5)| = 3$, $A(u_3)=\{B,A(C,D)\}$ or $\{B,E\}$. 

If $A(u_1) = \{E,C(D)\}$, then $A(u_3) = \{B,C(D)\} \text{ or } \{B,E\}$ and we can recolor $v_4$ with $1$, color $u_4, u_5$ with $A, 1$, and color $u_1, u_2, u_3$ with SDR. If $A(u_1) = \{E,A\}$, then $A(u_3) = \{B,E\}$ and $A(u_2) = \{A,B,E\}$. Since $v_2''v_4''$ is an edge in $G'$, we know $f(v_2'') = D$ and $f(v_2') = f(v_3') = C$. We can recolor $v_2''$ to one of $\{A,B,E\}$, and color $u_1,u_2,u_3,u_4,u_5$ with $A,D,B,E,1$, which is again a contradiction.

 \textbf{Case 1.1.1.2:} $|A(u_4) \cup A(u_5)| = 2$. Say $A(u_4) \cup A(u_5) = \{D,E\}$.

If $A(u_4) = \{E\}$ and $A(u_5) = \{D\}$, then we know $f(v_4'') = C, f(v_4') = D, f(v_5') = E$. We first recolor $v_4$ and $v_5$ to $1$, then $A(u_4) = \{A,B,E\}$, $A(u_5) = \{A,B,D\}$, and each of $|A(u_i)| \ge 3$, where $i \in [3]$. Therefore, the only cases we are not done by SDR is when the union of available colors of four vertices has size three or $|A(u_1) \cup A(u_2) \cup A(u_3) \cup A(u_4) \cup A(u_5)| = 4$. In the former case, the four vertices must be $\{u_1, u_2, u_3, u_4\}$ or $\{u_1, u_2, u_3, u_5\}$; if the four vertices are $\{u_1, u_2, u_3, u_4\}$, then since $A \in A(u_3)$, $A(u_1) = A(u_2) = A(u_3) = \{A,B,D\}$ we can recolor $v_5$ with $B$ and color $u_1, u_2, u_3, u_4, u_5$ with $A,B,D,E,1$, which is a contradiction; if the four vertices are $\{u_1, u_2, u_3, u_5\}$, then since $B \in A(u_2)$, $A(u_1) = A(u_2) = A(u_3) = \{A,B,E\}$ we can recolor $v_4$ with $A$ and color $u_1, u_2, u_3, u_4, u_5$ with $A,B,E,1,D$, which is again a contradiction. In the latter case, $A(u_1) \cup A(u_2) \cup A(u_3) \cup A(u_4) \cup A(u_5) = \{A,B,D,E\}$ and we may assume $A(u_1) \cup A(u_2) \cup A(u_3) \cup A(u_4) = \{A,B,D,E\}$ as well. We can recolor $v_5$ to $B$ and color $u_2, u_5$ with $B,1$. Then $|A(u_1)| \ge 2, |A(u_3)| \ge 2, |A(u_4)| \ge 2$ and $|A(u_1) \cup A(u_3) \cup A(u_4)| \ge 3$ thus we are done by SDR.

If $A(u_4) = E$ and $A(u_5) = \{D,E\}$, then we know $f(v_5) = 1$ and $ f(v_4'') = C$ and $f(v_4') = D$.  We first recolor $v_4$ with $1$. Then $A(u_5) = \{A,D,E\}$ and $A(u_4) = \{A,E\}$. Since $B \in A(u_2)$, $|A(u_1)| \ge 2$, $|A(u_2)| \ge 3$, $|A(u_3)| \ge 3$, the only cases we are not done by SDR is when the union of available colors of four vertices has size three or $|A(u_1) \cup A(u_2) \cup A(u_3) \cup A(u_4) \cup A(u_5)| = 4$. In the former case, the four vertices must be $\{u_1, u_2, u_3, u_4\}$ or $\{u_1, u_3, u_4, u_5\}$; if the four vertices are $\{u_1, u_2, u_3, u_4\}$, then $A(u_1) = A(u_4) = \{A,E\}$, $A(u_2) = A(u_3) = \{A,B,E\}$, and we can recolor $v_4$ to $A$, color $u_1, u_2, u_3, u_4, u_5$ with $A,B,E,1,D$, which is a contradiction; if the four vertices are $\{u_1, u_3, u_4, u_5\}$, then $A(u_4) = \{A,E\}$, $A(u_3) = A(u_5) = \{A,D,E\}$, and we can color $u_2, u_5$ with $B,1$, and color $u_1, u_3, u_4$ by SDR.  In the latter case, $A(u_1) \cup A(u_2) \cup A(u_3) \cup A(u_4) \cup A(u_5) = \{A,B,D,E\}$ and we may assume $A(u_1) \cup A(u_2) \cup A(u_3) \cup A(u_4) = \{A,B,D,E\}$ as well. We can color $u_5$ with $1$ and color $u_1, u_2, u_3, u_4$ by SDR.

.
If $A(u_5) = D$ and $A(u_4) = \{D,E\}$, then we know $f(v_4') = 1$, $ f(v_4'') = C$ and $f(v_5') = E$. We first recolor $v_5$ with $1$. Then $A(u_5) = \{B,D\}$ and $A(u_4) = \{B,D,E\}$. By $v_2''v_4'' \in G'$, $f(v_2'') \in \{D,E\}$ and thus $\{D,E\} \nsubseteq A(u_2)$. Since $B \in A(u_2)$, $|A(u_1)| \ge 3$, $|A(u_2)| \ge 3$, $|A(u_3)| \ge 2$, the only cases we are not done by SDR is when the union of available colors of four vertices has size three or $|A(u_1) \cup A(u_2) \cup A(u_3) \cup A(u_4) \cup A(u_5)| = 4$. In the former case, the four vertices must be $\{u_1, u_2, u_3, u_5\}$ or $\{u_1, u_3, u_4, u_5\}$; if the four vertices are $\{u_1, u_2, u_3, u_5\}$, then $A(u_5) = \{B,D\}$, $A(u_1) = A(u_2) = \{B,D,A(C)\}$, and we can recolor $v_5$ to $B$, color $u_2, u_4, u_5$ with $B,E,1$ and $u_1,u_3$ by SDR; if the four vertices are $\{u_1, u_3, u_4, u_5\}$, then $A(u_4) = \{A,E\}$, $A(u_1) = A(u_4) = \{B,D,E\}$, and we can color $u_2, u_4$ with $C \text{ or }A,1$, and color $u_1, u_3, u_5$ by SDR.  In the latter case, $A(u_1) \cup A(u_2) \cup A(u_3) \cup A(u_4) \cup A(u_5) = \{A (C),B,D,E\}$ and we may assume $A(u_1) \cup A(u_2) \cup A(u_3) \cup A(u_5) = \{A (C),B,D,E\}$ as well. We can color $u_4$ with $1$ and $u_1, u_2, u_3, u_5$ by SDR.

 \textbf{Case 1.1.1.3:} $|A(u_4)| \ge 2$ and $|A(u_5)| \ge 2$. Then the cases we are not done by SDR are $|A(u_1) \cup A(u_3) \cup A(u_4) \cup A(u_5)| = 2$ or $|A(u_1) \cup A(u_3) \cup A(u_4) \cup A(u_5)| = |A(u_1) \cup A(u_2) \cup A(u_3) \cup A(u_4) \cup A(u_5)| = 3$. 

$\bullet$ $|A(u_1) \cup A(u_3) \cup A(u_4) \cup A(u_5)| = 2$. Say $|A(u_1) \cup A(u_3) \cup A(u_4) \cup A(u_5)| = \{D, E\}$. It implies $A(u_1) = A(u_3) = A(u_4) = A(u_5) = \{D,E\}$. Since $v_2v_5 \in G'$, $B \in A(u_2)$ and $f(v_2'') \neq B$. Thus, $f(v_3') = B$ and $f(v_2'') = C$. We can also assume $\{f(v_1'), f(v_1'')\} = \{A,C\}$. We have either $f(v_4'') = 1$ or $f(v_4') = f(v_5') = 1$. 

In the former case, $f(v_4') = f(v_5') = C$, $1 \in \{f(z_4'), f(z_4'')\}$, and $1 \in \{f(w_5'), f(w_5'')\}$. If we can recolor $v_4$ to $D$ or $E$, then $A \in A(u_4)$ and it implies $|A(u_1) \cup A(u_3) \cup A(u_4)| \ge 3$. Since $B \in A(u_2)$ and $B \notin A(u_4)$, it implies $|A(u_1) \cup A(u_2) \cup A(u_3) \cup A(u_4)| \ge 4$. Thus, we can color $u_5$ with $1$ and finish the coloring on $u_1, u_2, u_3, u_4$ by SDR. Therefore, we can assume $f(w_4) = E$ and $\{f(z_4'), f(z_4'')\} = \{1,D\}$. Similarly, we can assume $\{f(w_5'), f(w_5'')\} = \{1,D\}$. Then we can recolor $v_5$ with $A$, $v_4$ with $B$, and color $u_1, u_2, u_3, u_4, u_5$ with $D,B,A,1,E$, which is a contradiction. 

In the latter case, $f(v_4'') = C$. We may assume $f(v_4') = 1$ since otherwise we can recolor $v_4$ to $1$  then $A \in A(u_4)$ and we are done by the same reason which was just covered in the previous paragraph (line 2-4). Similarly, $f(v_5') = 1$. We may assume $f(w_4) = 1$ since otherwise we can recolor $v_4''$ with $1$, which contradicts our assumption. If we can recolor $v_4$ to $D$ or $E$, then $A \in A(u_4)$ and we are done by the same reason which was just covered in the previous paragraph (lines 2-4). Then we have $\{f(z_4'), f(z_4'')\} = \{f(w_5), f(w_5'')\} = \{D,E\}$, and we are done by recoloring $v_5$ with $A$, $v_4$ with $B$, and color $u_1, u_2, u_3, u_4, u_5$ with $D,B,A,1,E$.

$\bullet$ $|A(u_1) \cup A(u_3) \cup A(u_4) \cup A(u_5)| = |A(u_1) \cup A(u_2) \cup A(u_3) \cup A(u_4) \cup A(u_5)| = 3$. Let $U = A(u_1) \cup A(u_2) \cup A(u_3) \cup A(u_4) \cup A(u_5)$. Since $v_2v_5 \in G'$, $B \in A(u_2)$ and thus $B \in U$. 

If $A \in U$, say $U = \{A,B,C\}$, then we may assume $f(v_4'') = f(v_3') = f(v_2') = f(v_1') = D$ and $f(v_4') = f(v_5') = f(v_1'') = f(v_2'') = E$. Then $|A(u_4)| = |A(u_5)| = 1$, which was solved in Case 1.1.1.1. Thus, we know $A \notin U$ and we may assume by symmetry $U = \{B,D,E\}$. Since $v_3v_4, v_2v_5 \in G'$,  we must have $f(v_2') = A$ and $f(v_2'') = C$. It implies $v_3' = B$ and thus $B \notin A(u_1) \cup A(u_3) \cup A(u_4) \cup A(u_5)$ and thus $|A(u_1) \cup A(u_3) \cup A(u_4) \cup A(u_5)| = 2$. By the previous case of Case 1.1.1.3, we are done.

 \textbf{Case 1.1.2:} $f(v_3) = f(v_5) = 1$. By SDR, the only case we are not done is $|A(u_1) \cup A(u_3) \cup A(u_5)| = 2$. We have either $f(v_2) = f(v_4)$ or $f(v_2) \neq f(v_4)$.

 \textbf{Case 1.1.2.1:} $f(v_2) = f(v_4)$. Say $f(v_2) = f(v_4) = A$ and assume $A(u_1) \cup A(u_3) \cup A(u_5) = \{D,E\}$. Then we can assume $f(v_4'') = B$, $f(v_5') = C$, $\{f(v_1'), f(v_1'')\} = \{B,C\}$, $f(v_2'') = C$, and $f(v_3') = B$. We may also assume $f(w_4) = 1$ since otherwise we can recolor $f(v_4'')$ with $1$ and $|A(u_1) \cup A(u_3) \cup A(u_5)| = 3$ (we are done by SDR). If we can recolor $v_4$ to $C,D$ or $E$, then we are again done by SDR since then $|A(u_1) \cup A(u_3) \cup A(u_5)| = 3$. Therefore, $\{f(v_4'), f(z_4'), f(z_4'')\} = \{C,D,E\}$. However, we can recolor $v_4'$ to $1$ and reach a contradiction. 


 \textbf{Case 1.1.2.2:} $f(v_2) \neq f(v_4)$. Say $f(v_2) = B$ and $f(v_4) = A$. Since $v_2v_5 \in G'$, $B \in A(u_5)$. Since $|A(u_1)| \ge 2$ and $B \notin A(u_1)$, we know $|A(u_1) \cup A(u_3) \cup A(u_5)| \ge 3$. Therefore, by $|A(u_1)| \ge 2$, $|A(u_5)| \ge 2$, and $|A(u_3)| \ge 1$, we can color $u_1, u_3, u_5$ by SDR and color $u_2$ and $u_4$ with $1$.

 \textbf{Case 1.2:} One of $v_2, v_3, v_4, v_5$ is colored by $1$. By symmetry, we have two cases.

 \textbf{Case 1.2.1:} $f(v_2) = 1$. Since $v_3v_4 \in G'$ and $v_4,v_5$ are at distance two, we know $f(v_4) \neq f(v_3)$ and $f(v_4) \neq f(v_5)$. We thus have the following two subcases.

 \textbf{Case 1.2.1.1:} $f(v_4) = B$ and $f(v_3) = f(v_5) = A$. In this case, $A$ is not available for each of $u_1, u_2, u_3, u_4, u_5$. Since $|A(u_4)| \ge 1$, $|A(u_2)| \ge 2$, and $|A(u_1)| \ge 2$, the only case  we are not done by SDR is when $|A(u_1) \cup A(u_2) \cup A(u_4)| = 2$. Let $U = A(u_1) \cup A(u_2) \cup A(u_4)$. If $B \in U$, say $U = \{B,C\}$, then we must have $\{f(v_4'), f(v_4'')\} =  \{f(v_2'), f(v_2'')\} = \{f(v_1'), f(v_1'')\} = \{D,E\}$ and $f(w_4) = 1$. We must also have $f(v_5') = 1$ since otherwise we can recolor $v_5$ with $1$ and then we are done by SDR ($A$ becomes available in $U$). Then we can recolor $v_5$ to one of $C,D,E$, then $A \in A(u_5)$ and $A(u_2) = \{B,C\}$ imply that we are done by SDR. Therefore, we must have $B \notin U$, say $U = \{D,E\}$. We must have $\{f(v_1'), f(v_1'')\} = \{f(v_2'), f(v_2'')\} = \{B,C\}$. If we can recolor $v_4$ with $1$, then $B$ is available in $U$ and we are done by SDR. Thus, at least one of $v_4'$ and $v_4''$ must be colored with $1$. For the same reason, we cannot recolor $v_4$ with $D,E$, and thus $D,E$ are with distance two from $v_4$. If $f(v_4'') = 1$, then $f(v_4'') = C$ and $1 \in \{f(z_4'), f(z_4'')\}$. We may assume $f(w_4) = E$ and $\{f(z_4'), f(z_4'')\} = \{1,D\}$. If we can recolor $v_5$ with one of $B,C,D$ (for color $B$ we switch the colors of $v_4$ and $v_5$), then $A$ becomes available in $U$ and we are done by SDR. Thus, $\{f(w_5'), f(w_5''), f(v_5')\} = \{B,C,D\}$. However, we can recolor $v_5'$ with $1$, which is a contradiction. Therefore, we must have $f(v_4') = 1$ and $f(v_4'') = C$. Similarly, we know $f(w_4) = 1$ and $\{f(z_4'), f(z_4'')\} = \{D,E\}$. By a similar argument, we reach a contradiction.

 \textbf{Case 1.2.1.2:} $f(v_4) = B$, $f(v_5) = A$, and $f(v_3) = C$.  Similarly to Case 1.2.1.1, we know $|A(u_1) \cup A(u_2) \cup A(u_4)| = 2$. Since $v_2v_5 \in G'$, $A \in A(u_2)$. However, $|A(u_1)| \ge 2$ and $A \notin A(u_1)$, which is a contradiction.

 \textbf{Case 1.2.2:} $f(v_3) = 1$. Since $v_2v_5 \in G'$, $f(v_2) \neq f(v_5)$ and $f(v_4) \neq f(v_5)$. We have the following cases.

 \textbf{Case 1.2.2.1:} $f(v_5) = B$ and $f(v_2) = f(v_4)$, say $f(v_2) = f(v_4) = A$. Then none of $u_1, u_2, u_3, u_4, u_5$ has color $A$ available. We know one of $v_4''$ and $v_5'$ must be $1$, since otherwise we can recolor $v_5$ with $1$ and it was already done in Case 1.1.2. Let $U = A(u_1) \cup A(u_3) \cup A(u_4)$. In case $|A(u_4)| = 1$, $f(v_4')$ and $f(v_4'')$ are both $2$-colors, then we can recolor $v_4$ with $1$, color $u_4$ with $A$, $u_2, u_5$ with $1$, and color $u_1, u_3$ by SDR. Thus, we may assume $|A(u_4)| \ge 2$. Since $|A(u_1)| \ge 1$, $|A(u_3)| \ge 2$, and $|A(u_4)| \ge 2$, the only case we are not done by SDR is when $|U| = 2$. Since $|A(u_4)| \ge 2$ and $B \notin A(u_4)$, $B \notin U$. Thus, we may assume $U = \{D,E\}$. 

Recall that one of $v_4''$ and $v_5'$ must be colored with $1$. If $f(v_4'') = 1$, then $f(v_4') = C$ and say $\{f(v_1'), f(v_1'')\} = \{C,D\}$. We also know $f(v_5') = C$, since otherwise we can color $u_1, u_2, u_3, u_4, u_5$ with $E, 1, D, 1, C$, which is a contradiction. We claim $\{f(w_4), f(w_4'), f(w_4'')\}$ must be $\{B,D,E\}$ since otherwise we can recolor $v_4''$ with $B$ ($D$ or $E$) and recolor $v_5$ with $1$, which was already covered in Case 1.1.2. However, we can switch the colors of $v_4''$ and $w_4$, recolor $v_4$ with $1$, and color $u_1, u_2, u_3, u_4, u_5$ with $E, 1, D, A, 1$. If $f(v_4'') \neq 1$, then $f(v_5') = f(w_4) = 1$ and $f(v_4') = C$. Since $|A(u_4)| \ge 2$, $f(v_4') = 1$. We claim $\{f(w_5'), f(w_5'')\} = \{D,E\}$ since otherwise we recolor $v_5$ with $D$ or $E$, and color $u_1, u_2, u_3, u_4, u_5$ with $B,1,E,D,1$, which is a contradiction. We also know $\{f(w_4'), f(w_4'')\} = \{D,E\}$ since otherwise we can recolor $v_4''$ with $D$ or $E$, and color $u_1, u_2, u_3, u_4, u_5$ with $E(D),1,D(E),C,1$, which is a contradiction. However, we can switch the colors of $v_4''$ and $v_5$, and color $u_1, u_2, u_3, u_4, u_5$ with $B,1,E,D,1$, which is again a contradiction.


 \textbf{Case 1.2.2.2:} $f(v_5) = B$ and $f(v_2) \neq f(v_4)$, say $f(v_2) = A$ and $f(v_4) = C$. 

The first case that we are not done is when $u_1$ and $u_3$ both have only one color available and the available colors are the same, say $D$. Then we may assume that $\{f(v_1'), f(v_1'')\} = \{C,E\}$, $f(v_2') = 1$, $f(v_2'') = E$, and $f(v_3') = B$. Since $v_2''v_4'' \in G'$ and $v_2v_5 \in G'$, $f(v_4'')$ can be $1$ or $D$. In case $f(v_4'') = D$. We may assume $f(w_4) = 1$. We know $f(v_5') = 1$ since otherwise we can recolor $v_5$ with $1$ and it was solved in Case 1.1.2. We also claim $f(v_4') = 1$ since otherwise we can recolor $v_4$ with $1$ and color $u_1, u_2, u_3, u_4, u_5$ with $D,1,C,E(A),1$, which is a contradiction. If we can recolor $v_4$ to $A$ or $E$, then we can color $u_1, u_2, u_3, u_4, u_5$ with $D,1,C,1,E(A)$, which is a contradiction. Thus, we know $\{f(z_4'), f(z_4'')\} = \{A,E\}$. Similarly, we know $\{f(w_5'), f(w_5'')\} = \{A,E\}$. Then we switch the colors of $v_5$ and $v_4$, and color $u_1, u_2, u_3, u_4, u_5$ with $D,1,C,1,A$, which is a contradiction. In case $f(v_4'') = 1$. Let $N(v_2'') = \{v_2, v_3, y_2\}$ and $N(y_2) = \{v_2'', y_2', y_2''\}$. We know $f(y_2)$ can be $1,B,C,D$. If $f(y_2) = 1$, then we know $f(N(v_3') - v_3) = \{1,C\}$ since otherwise we can recolor $v_3$ with $B$ and $v_3'$ with $1$ (recolor $v_3$ with $C$), and color $u_1, u_2, u_3, u_4, u_5$ with $D,C(B),1,E \text{ or }A,1$, which is a contradiction; we recolor $v_3$ with $D$, and color $u_1, u_2, u_3, u_4, u_5$ with $D,B,1,E \text{ or }A, 1$, which is again a contradiction. If $f(y_2) = C$, then we know $1 \in \{f(y_2'), f(y_2'')\}$ since otherwise we can recolor $y_2$ with $1$, which is already solved. We also know $D \in \{f(y_2'), f(y_2'')\}$ since otherwise we can recolor $v_2''$ with $D$, and color $u_1, u_2, u_3, u_4, u_5$ with $D,1,E,1,A$, which is a contradiction. We have $f(N(v_3') - v_3) = \{1,D\}$ since otherwise we can recolor $v_3$ with $B$ or $D$, and color $u_1, u_2, u_3, u_4, u_5$ with $D,1,E,1,A$, which is a contradiction. We claim $f(v_4')$ must be $A$ since otherwise we can switch the colors of $v_2''$ and $v_3$, and color $u_1, u_2, u_3, u_4, u_5$ with $D,B,1,A,1$, which is a contradiction. We claim $f(N(v_2')-v_2) = \{B,D\}$ since otherwise we can recolor $v_2$ with $B (D)$, and color $u_1, u_2, u_3, u_4, u_5$ with $D (A),1 (B),A (1),E,1$, which is a contradiction. Thus, we can recolor $v_2, v_2'', v_3$ with $E,1,A$, and color $u_1, u_2, u_3, u_4, u_5$ with $D,B,1,E,1$, which is again a contradiction. If $f(y_2) = D$, then similarly to previous cases we can assume that $\{f(y_2'), f(y_2'')\} = f(N(v_3') - v_3) = \{1,C\}$, $f(v_4') = A$, and $f(N(v_2') - v_2) = \{B,C\}$; thus we can recolor $v_2, v_2'', v_3$ with $E,1,A$, and color $u_1, u_2, u_3, u_4, u_5$ with $D,B,1,E,1$, which is a contradiction. If $f(y_2) = B$, then similarly to previous cases we can assume that $\{f(y_2'), f(y_2'')\}  = \{1,C\}$, $f(N(v_3') - v_3) = \{C,D\}$, $f(v_4') = A$, and $f(N(v_2') - v_2) = \{C,E\}$; thus we can recolor $v_2, v_2'', v_3$ with $D,1,A$, and color $u_1, u_2, u_3, u_4, u_5$ with $A,B,1,E,1$, which is a contradiction. 

Therefore, we know $|A(u_1) \cup A(u_3)| \ge 2$. We know $1 \in \{f(v_4''), f(v_5')\}$ since otherwise we can recolor $v_5$ with $1$ and it was already solved in Case 1.1.2. We know $|A(u_5)| \ge 2$ and $|A(u_1) \cup A(u_3)| \ge 2$ thus the only case we are not done by SDR ($u_2$ and $u_4$ can be colored with $1$) is when $|A(u_1) \cup A(u_3) \cup A(u_5)| = 2$. Since $A \in A(u_5)$ and $B,C \notin A(u_5)$, we may assume by symmetry that $A(u_5) = A(u_1) \cup A(u_3) = A(u_1) \cup A(u_3) \cup A(u_5) = \{A,D\}$. However, $A \notin A(u_1)$ and $A \notin A(u_3)$ imply that $A(u_1) \cup A(u_3) = \{D\}$, which is a contradiction. 

 \textbf{Case 1.3:} None of $v_2, v_3, v_4, v_5$ is colored by $1$. We know at least one of $v_4''$ and $v_5'$ are colored with $1$, since otherwise we can recolor $v_5$ with $1$ and it was already solved in Case 1.2. Similarly, we know at least one of $v_4'$ and $v_4''$, $v_3'$ and $v_2''$, $v_2'$ and $v_2''$ are colored with $1$. By symmetry, we have three cases for the colors on $v_2, v_3, v_4, v_5$.

 \textbf{Case 1.3.1:} $v_2, v_3, v_4, v_5$ has colors $B,A,B,A$. We know either $f(v_4'') = 1$ and $f(v_2'') \neq 1$, $f(v_2'') = 1$ and $f(v_4'') \neq 1$, and both $f(v_2'')$ and $f(v_4'')$ are not colored with $1$. By symmetry, we have two cases.

 \textbf{Case 1.3.1.1:} $f(v_2'') = 1$ and $f(v_4'') = C$. Then $f(v_4') = f(v_5') = 1$. Since $|A(u_1)| \ge 1$, $|A(u_3)| \ge 2$, $|A(u_4)| \ge 2$, we know $|A(u_1) \cup A(u_3) \cup A(u_4)| = 2$ since otherwise we are done by SDR (color $u_2$ and $u_5$ with $1$). Since $A(u_4) = \{D,E\}$, we know $A(u_3) = \{D,E\}$ and $A(u_1) = \{E\}$ (by symmetry). Therefore, $f(v_3') = C$. Similarly, we know $f(v_2') = C$. We can assume $f(w_4) = 1$ since otherwise we recolor $v_4''$ with $1$ and we are done by SDR. If we can recolor $v_4$ with $D$ or $E$, then we can color $u_1, u_2, u_3, u_4, u_5$ with $E,D,1,B,1$, which is a contradiction. Therefore, $f(N(v_4')-v_4) = \{D,E\}$. Similarly, $f(N(v_5')-v_5) = \{D,E\}$ since otherwise we can recolor $v_5$ with $D (E)$, and color $u_1, u_2, u_3, u_4, u_5$ with $A,D (E),1,E (D),1$, which is a contradiction. We switch the colors of $v_4$ and $v_5$, and color $u_1, u_2, u_3, u_4, u_5$ with $A,1,E,D,1$, which is a contradiction.

 \textbf{Case 1.3.1.2:} $f(v_2'') = D$ and $f(v_4'') = C$. Then $f(v_2') = f(v_3') = f(v_4') = f(v_5') = 1$, $A(u_2) = A(u_3) = \{C,E\}$, $A(u_4) = A(u_5) = \{D,E\}$, and $|A(u_1)| \ge 1$. Thus, we can color $u_2$ and $u_5$ with $1$, and color $u_1, u_3, u_4$ with SDR.

 \textbf{Case 1.3.2:} $v_2, v_3, v_4, v_5$ has colors $B,C,B,A$. Since $v_2v_5, v_2''v_4'', v_3v_4 \in G'$, we have the following two cases.

 \textbf{Case 1.3.2.1:} $f(v_2'') = 1$ and $f(v_4'') = D$. We know $f(v_4') = f(v_5') = 1$. Since $A(u_5) = \{C,E\}$ and $C \notin A(u_3)$, we know $|A(u_1) \cup A(u_3) \cup A(u_5)| \ge 3$. By $|A(u_1)| \ge 1$, $|A(u_3)| \ge 2$, $|A(u_5)| \ge 2$, and $|A(u_1) \cup A(u_3) \cup A(u_5)| \ge 3$, we know $u_1,u_3, u_5$ can be colored with SDR. We finish by coloring $u_2$ and $u_4$ with $1$.

 \textbf{Case 1.3.2.2:} $f(v_2'') = E$ and $f(v_4'') = D$. We know $f(v_2') = f(v_3') = f(v_4') = f(v_5') = 1$. We color $u_2$ and $u_4$ with $1$. Since $|A(u_1)| \ge 1$, $A(u_5) = \{C,E\}$, and $A(u_3) = \{A,D\}$, we can color $u_1, u_3, u_5$ with SDR.

 \textbf{Case 1.3.3:} $v_2, v_3, v_4, v_5$ has colors $B,D,C,A$. Since $v_2v_5, v_2''v_4'', v_3v_4 \in G'$,  $f(v_2'') = 1$ and $f(v_4'') = E$ up to symmetry. Then $f(v_5') = f(v_4') = 1$. We know $A(u_4) = \{B\}$, $A \in A(u_2)$, $|A(u_2)| = 2$, $|A(u_1)| \ge 1$, and $A,B \notin A(u_1)$. Therefore, we can color $u_3$ and $u_5$ with $1$, and color $u_1,u_2,u_4$ with SDR.

 \textbf{Case 2:} $f(v_1) \neq 1$. Say $f(v_1) = A$ and $f(v_1') = 1$. We have three cases depending on the colors of $v_2,v_3,v_4,v_5$.

 \textbf{Case 2.1:} Two of $v_2,v_3,v_4,v_5$ are colored with $1$. Since $v_2v_5, v_3v_4 \in G'$, we have two subcases.

 \textbf{Case 2.1.1:} $f(v_2) = f(v_3) = 1$. By symmetry, we have three cases depending on the colors of $v_4$ and $v_5$.

 \textbf{Case 2.1.1.1:} $f(v_4) = B$ and $f(v_5) = A$. Let $U  = A(u_2) \cup A(u_3) \cup A(u_5)$ and we intend to color $u_1,u_4$ with $1$. We know $|A(u_2)| \ge 2$, $|A(u_3)| \ge 2$, $|A(u_5)| \ge 1$, $A \notin A(u_2)$, and $B \notin A(u_3)$. We must have $A,B \notin U$ since otherwise we are done by SDR. We also know $|U| = |A(u_2)| = |A(u_5)| = 2$ and therefore can assume $U = A(u_2) = A(u_5) = \{D,E\}$. By $v_2v_5, v_3v_4 \in G'$, $f(v_2') = B$, $f(v_2'') = f(v_5')= C$, $f(w_5') = 1$, and $f(v_3') = A$. By symmetry and $v_2''v_4'' \in G'$, we know $f(v_4'')$ is either $1$ or $D$. In the former case, we know $f(w_4) = D$ or $E$, and we assume $f(w_4) = D$. If we can recolor $v_4$ with $E$ or $C$, then we can color $u_1,u_2,u_3,u_4,u_5$ with $1,E,D,1,B$, which is a contradiction. Thus, $v_4', z_4', z_4''$ are colored with $E,C,1$ or $C,E,1$ or $1,E,C$. If we can switch the colors of $v_4$ and $v_5$, then we can color $u_1,u_2,u_3,u_4,u_5$ with $1,E,B,1,D$, which is a contradiction. Therefore, $f(w_5'') = B$ and we can recolor $v_5,v_4$ with $E,A$ and color $u_1,u_2,u_3,u_4,u_5$ with $1,E,D,1,B$, which is again a contradiction. In the latter case, we know $f(w_4) = 1$ and can repeat the argument in the former case.

 \textbf{Case 2.1.1.2:} $f(v_4) = A$ and $f(v_5) = B$. Similarly to Case 2.1.1.1, we know $|U| = 2$. By $v_2v_5 \in G'$, $B \in A(u_2)$ and $B \notin A(u_5)$. We also know $A \notin U$ and thus $U = \{B,C\}$ by symmetry. Then we may assume $f(v_2') = f(v_3') = D$ and $f(v_2'') = E$. Since $v_2''v_4'' \in G'$, we must have $f(v_4'') = D$ and $f(v_5') = E$. Similarly to Case 2.1.1.1, we know $v_4', z_4', z_4''$ are colored with $E,C,1$ or $C,E,1$ or $1,E,C$. Therefore, we recolor $v_5$ with $1$, $v_4$ with $B$, and color $u_1,u_2,u_3,u_4,u_5$ with $1,B,A,1,C$, which is a contradiction.

 \textbf{Case 2.1.1.3:} $f(v_4) = C$ and $f(v_5) = B$. Similarly to Case 2.1.1.1, we know $|U|=2$, $B \in U$, and $A,C \notin U$. Thus, we may assume $U = \{B,D\}$, $f(v_2') = C$, $f(v_2'') = E$, $f(v_3') = A$. By $v_2''v_4'' \in G'$, we can assume $f(v_5') = E$ and conclude $f(v_4'') = 1,A, \text{ or }D$. In case $f(v_4'') = 1$, then we know $f(w_4) = A \text{ or }D$. If $v_4$ can be recolored with $D(A)$ or $E$, then we can color $u_1,u_2,u_3,u_4,u_5$ with $1,D,B,1,C$, which is a contradiction. Thus, $v_4',z_4', z_4''$ are colored with $1,D(A),E$ or $D(A),1,E$ or $E,1,D(A)$. If we can switch the colors of $v_4$ and $v_5$, then we can color $u_1,u_2,u_3,u_4,u_5$ with $1,B,C,1,D$, which is a contradiction. Therefore, $f(w_5'') = C$ and we can recolor $v_5$ with $D$ then color $u_1,u_2,u_3,u_4,u_5$ with $1,B,D,1,C$, which is again a contradiction. In case $f(v_4'') = A(D)$, we know $f(w_4) = 1$. Similarly to former case, we know $v_4',z_4', z_4''$ are colored with $1,D(A),E$ or $D(A),1,E$ or $E,1,D(A)$. Therefore, we recolor $v_4, v_5$ with $B,1$ and color $u_1,u_2,u_3,u_4,u_5$ with $1,B,D,1,C$, which is a contradiction.

 \textbf{Case 2.1.2:} $f(v_2) = f(v_4) = 1$. By symmetry, we have three cases depending on the colors of $v_4$ and $v_5$. Let $U = A(u_1) \cup A(u_2) \cup A(u_4)$. We color $u_3,u_5$ with $1$.

 \textbf{Case 2.1.2.1:} $f(v_3) = A$ and $f(v_5) = B$. Since $|A(u_4)| \ge 1$, $|A(u_1)| \ge 2$, $|A(u_2)| \ge 2$, $|U| = 2$ since otherwise we are done by SDR. Thus, $A(u_1) = A(u_2)$. Since $v_2v_5 \in G$, $B \in A(u_2)$. However, $B \notin A(u_1)$, which is a contradiction.

 \textbf{Case 2.1.2.2:} $f(v_3) = C$ and $f(v_5) = B$. Since $|A(u_4)| \ge 1$, $|A(u_2)| \ge 1$, $|A(u_1)| \ge 2$, $B \in A(u_2)$, and $B \notin A(u_1)$, we have $|U| \ge 3$. The only case we are not done by SDR is when $A(u_2) = A(u_4)$, which contradicts with the fact that $B \in A(u_2)$ and $B \notin A(u_4)$.

 \textbf{Case 2.1.2.3:} $f(v_3) = B$ and $f(v_5) = A$. Since $|A(u_4)| \ge 1$, $|A(u_2)| \ge 1$, and $|A(u_1)| \ge 3$, we must have $A(u_2) = A(u_4)$ and both have size $1$. Thus, we may assume $A(u_2) = A(u_4) = \{C\}$, $f(v_2') = D$, and $f(v_2'') = E$. Since $v_2''v_4'' \in G'$, $f(v_4'') = D$ and $f(v_4') = E$. We claim $f(v_5') = 1$ since otherwise we can recolor $v_5$ with $1$, and color $u_1, u_2, u_3, u_4, u_5$ with $1,C,1,A,B(E)$, which is a contradiction. If we can recolor $v_4''$ with $B$ or $C$, then we can color $u_1, u_2, u_3, u_4, u_5$ with $B(E),C,1,D,1$, which is a contradiction. Therefore, $w_4,w_4', w_4''$ are colored with $1,B,C$ or $B,1,C$ or $C,1,B$. Let $f(w_4) = B (C)$. If we can recolor $v_5$ with $E$ or $C(B)$, then we can color $u_1, u_2, u_3, u_4, u_5$ with $D(B),C,1,D,1$ or $D(E),C,1,D,1$, which is a contradiction. Thus, $\{f(w_5'), f(w_5'')\} = \{E,C(B)\}$. If we can recolor $v_4$ with $C(B)$, then we recolor $v_4''$ with $1$ and color $u_1, u_2, u_3, u_4, u_5$ with $E(B),C,1,D,1$, which is a contradiction.  If we can switch the colors of $v_4'$ and $v_4$, then we recolor $v_4''$ with $1$, and color $u_1, u_2, u_3, u_4, u_5$ with $B(E),C,1,D,1$, which is a contradiction. Therefore, $\{f(z_4'), f(z_4'')\} = \{1,C\}$. Then we recolor $v_4,v_4'', v_5$ with $A,1,D$, and color $u_1, u_2, u_3, u_4, u_5$ with $1,C,x,1,E$, where $x$ is depend on the color of $v_3'$ (if $f(v_3') \neq D$ then $x = D$; if $f(v_3') = D$ then we recolor $v_3$ with $1$ and $x = B$) and we reach a contradiction.

 \textbf{Case 2.2:} One of $v_2,v_3,v_4,v_5$ is colored with $1$. We have two cases up to symmetry.

 \textbf{Case 2.2.1:} $f(v_2) = 1$. Since $v_2v_5, v_2''v_4'', v_3v_4 \in G$, we have seven subcases depending on the colors of $v_3, v_4, v_5$ up to symmetry. One of $f(v_4')$ and $f(v_4'')$ must be $1$ since otherwise we recolor $v_4$ with $1$, which is solved in Case 2.1. Similarly, we know $f(v_3') = 1$.

 \textbf{Case 2.2.1.1:} $f(v_3) = A, f(v_4) = B, f(v_5) = A$. Since $|A(u_1)| \ge 3, |A(u_2)| \ge 2$, and $|A(u_4)| \ge 2$, we color $u_3,u_5$ with $1$ and $u_1,u_2,u_4$ with SDR, which is a contradiction.

 \textbf{Case 2.2.1.2:} $f(v_3) = A, f(v_4) = B, f(v_5) = C$. Color $u_3,u_5$ with $1$. Since $|A(u_1)| \ge 2, |A(u_2)| \ge 2$, and $|A(u_4)| \ge 2$, the only case we are not done with SDR is when $A(u_1) = A(u_2) = A(u_4)$ and $|A(u_1)| = |A(u_2)| = |A(u_4)| = 2$. However, we know by $v_2v_5 \in G'$ that $C \in A(u_2)$, which is a contradiction with $C \notin A(u_1)$.

 \textbf{Case 2.2.1.3:} $f(v_3) = B, f(v_4) = C, f(v_5) = A$. Since $|A(u_1)| \ge 3, |A(u_2)| \ge 1$, and $|A(u_4)| \ge 1$, the only case not done by SDR is when $A(u_2) = A(u_4)$ and both have size one. Say $A(u_2) = \{E\}$. Then $f(v_2') = C, f(v_2'') = f(v_4') = D$, and $f(v_4'') = 1$. We color $u_1,u_2,u_3,u_4,u_5$ with $1,E,A,1,B(D)$, which is a contradiction.

 \textbf{Case 2.2.1.4:} $f(v_3) = B, f(v_4) = A, f(v_5) = C$. We know $|A(u_2)| \ge 1$ and $|A(u_3)| \ge 2$.  In case $|A(u_5)| \ge 2$, by $v_2v_5 \in G'$ we know $C \in A(u_2)$. However, $C \notin A(u_5)$ and we can color $u_2,u_3,u_5$ with SDR ($u_1,u_4$ with $1$), which is a contradiction. In case $|A(u_5)| = 1$, we recolor $v_5$ with $1$. Now $|A(u_5)| \ge 2$ and thus $A(u_5) = A(u_3)$ with size two since otherwise we are done by SDR (color $u_1,u_4$ with $1$). Therefore, $f(v_5') = B$ and $f(v_4'') = D(E)$. Since $v_2''v_4'' \in G'$, $f(v_2'') \neq D(E)$ and we color $u_1, u_2, u_3, u_4, u_5$ with $1,C,D(E),1,E(D)$, which is a contradiction.

 \textbf{Case 2.2.1.5:} $f(v_3) = B, f(v_4) = A, f(v_5) = B$. We color $u_1$ and $u_4$ with $1$. We know $|A(u_5)| \ge 1$, $|A(u_2)| \ge 1$, and $|A(u_3)| \ge 2$. In case $|A(u_5)| = 1$, we recolor $v_5$ with $1$ and $B \in A(u_5)$ and $|A(u_5)| \ge 2$. We also know $B \notin A(u_3)$ and therefore $|A(u_5) \cup A(u_3)| \ge 3$. We color $u_2,u_3, u_5$ with SDR, which is a contradiction. Thus, we know at least one of $v_5'$ and $v_4''$ is colored with $1$. We also know $A(u_3) = A(u_5)$, both have size $2$, and $|A(u_2) \cup A(u_3) \cup A(u_5)| = 2$. Say $A(u_3) = A(u_5) = \{C,D\}$ and $A(u_2) = C$. Since we can also color $u_1,u_3$ with $1$, we similarly can also obtain $A(u_4) = A(u_5)$ and thus $A(u_4) = \{C,D\}$. By symmetry, we assume $f(v_2'') = E$ and $f(v_2') = D$. Since $v_2''v_4'' \in G$, we know $f(v_4') = f(v_5') = E$ and $f(v_4'') = 1 = f(w_5') = f(z_4')$. If we can recolor $v_5$ with $C$ or $D$, then it is already solved in Case 2.2.1.4. Thus, $f(w_5'') = f(z_4'') = D (C)$ and $f(w_4) = C (D)$. We switch the colors of $v_5$ and $v_4$, and color $u_1, u_2, u_3, u_4, u_5$ with $1,C,A,1,D$, which is a contradiction.

 \textbf{Case 2.2.1.6:} $f(v_3) = B, f(v_4) = C, f(v_5) = D$. We color $u_3,u_5$ with $1$. We know $|A(u_1)| \ge 2, |A(u_2)| \ge 1,$ and $|A(u_4)| \ge 1$. Since $v_2v_5 \in G'$, $D \in A(u_2)$. But $D \notin A(u_1)$ and thus $|A(u_1) \cup A(u_2)| \ge 3$. The only case we are not done by SDR is when $A(u_2) = A(u_4)$ and both have size $1$, which contradicts $D \notin A(u_4)$.

 \textbf{Case 2.2.1.7:} $f(v_3) = B, f(v_4) = C, f(v_5) = B$. We color $u_3,u_5$ with $1$. We know $|A(u_1)| \ge 2$, $|A(u_2)| \ge 1$, and $|A(u_4)| \ge 2$. We may assume $A(u_1) = A(u_4) = \{D,E\}$. Since $v_2''v_4'' \in G'$, we know $f(v_2') = C$ and $f(v_2'') = D$. We also know either $f(v_4') = 1$ and $f(v_4'') = A$ or $f(v_4') = A$ and $f(v_4'') = 1$. In the former case, $f(w_4) = 1$. We claim $f(v_5') = D$ since otherwise we color $u_1, u_2, u_3, u_4, u_5$ with $1,E,A,1,D$, which is a contradiction. Then we recolor $v_5$ with $1$ and color $u_1, u_2, u_3, u_4, u_5$ with $1,E,A,1,B$, which is a contradiction. In the latter case, we know $f(z_4') = 1$. Similarly to the former case, we know $f(v_5') = D$. We also know $\{f(w_5''), f(w_4) \} = \{A,E\}$ since otherwise we can recolor $v_5$ with $A(E)$ and it was solved in Case 2.2.1.6. If we can recolor $v_4$ with $D$, then we color $u_1, u_2, u_3, u_4, u_5$ with $1,E,A,1,C$, which is a contradiction. Thus, $f(z_4'') = D$. We switch the colors of $v_4$ and $v_5$, and color  $u_1, u_2, u_3, u_4, u_5$ with $B,E,1,D,1$, which is a contradiction.

 \textbf{Case 2.2.2:} $f(v_3) = 1$. Similarly to Case 2.2.1, we have seven subcases. We know $f(v_2') = 1$, since otherwise we recolor $v_2$ with $1$ and it was solved in Case 2.1. For the same reason, one of $f(v_4'')$ and $f(v_5')$ is colored with $1$.

 \textbf{Case 2.2.2.1:} $f(v_2) = A, f(v_4) = A, f(v_5) = B$. Color $u_1$ and $u_4$ with $1$. We know $|A(u_2)| \ge 3$, $|A(u_3)| \ge 2$, and $|A(u_5)| \ge 2$. We can color $u_2,u_3,u_5$ by SDR, which is a contradiction.

 \textbf{Case 2.2.2.2:} $f(v_2) = A, f(v_4) = B, f(v_5) = C$. Color $u_1$ and $u_4$ with $1$. We know $|A(u_2)| \ge 3$, $|A(u_3)| \ge 1$, and $|A(u_5)| \ge 1$. Thus, the only case we are not done by SDR is when $A(u_3) = A(u_5)$ and both have size one, say $A(u_3) = A(u_5) = \{E\}$. Since $v_2''v_4'' \in G'$, $f(v_2'') = f(v_5') = D$, $f(v_3') = C$, and $f(v_4'') = 1$. We also know $f(w_5') = 1$ since otherwise we can recolor $v_5'$ with $1$. Then we claim $\{f(w_5''), f(w_4)\} = \{A,E\}$ since otherwise we can recolor $v_4$ with $A$ or $E$ and can color $u_2, u_3, u_5$ with $B,E,C$, which is a contradiction. Say $f(w_4) = E (A)$. If we can switch the colors of $w_4,v_4''$, then we can recolor $v_5$ with $1$, which is a contradiction. Thus, either $ 1 \in \{f(w_4'), f(w_4'')\}$ or $f(v_4') = E (A)$. If we can switch the colors of $v_5$ and $v_4''$, then it was solved in Case 2.1. Thus, either $C \in \{f(w_4'), f(w_4'')\}$ or $f(v_4') = C$. Either $A \in \{f(w_4'), f(w_4'')\}$ or $f(v_4') = A (E)$ since otherwise we can recolor $v_4''$ with $A$ and $v_5$ with $1$. Thus, the colors on $v_4', w_4', w_4''$ can be $C,1,A(E)$ or $A(E),1,C$ or $E(A),A(E),C$. We know $\{f(z_4'), f(z_4'')\} = \{1,D\}$ since otherwise we either recolor $v_4'$ with $1$ or recolor $v_4$ with $D$ and color $u_2, u_3, u_5$ with $C,E,B$, which is a contradiction. In case $v_4', w_4', w_4''$ are colored with  $C,1,A(E)$ or $E(A),A(E),C$, we recolor $v_4$ with $A(E)$ and color $u_2, u_3, u_5$ with $C,B,E$, which is a contradiction. In case $v_4', w_4', w_4''$ are colored with $A(E),1,C$, we switch the colors of $v_4$ and $v_5$, and color $u_2, u_3, u_5$ with $C,B,E$, which is a contradiction.

 \textbf{Case 2.2.2.3:} $f(v_2) = C, f(v_4) = A, f(v_5) = B$. Color $u_1$ and $u_4$ with $1$. We know $|A(u_2)| \ge 2$, $|A(u_3)| \ge 1$, and $|A(u_5)| \ge 2$. Since $v_2v_5 \in G'$, $B \in A(u_2)$. However, $B \notin A(u_5)$ and $|A(u_2) \cup A(u_5)| \ge 3$. We are done by SDR. 

 \textbf{Case 2.2.2.4:} $f(v_2) = B, f(v_4) = B, f(v_5) = A$. Color $u_1$ and $u_4$ with $1$. We know $|A(u_2)| \ge 2$, $|A(u_3)| \ge 2$, and $|A(u_5)| \ge 2$. The only case we are not done by SDR is when $A(u_2) = A(u_3) = A(u_5)$ and all have size two. Say $A(u_2) = A(u_3) = A(u_5) = \{D,E\}$. Then $f(v_2'') = f(v_5') = C$, $f(v_3') = A$, and $f(v_4'') = 1$. If we can recolor $v_2$ with $A,D,E$, then we can color $u_2,u_3,u_5$ with SDR. Recall $N(v_2'') = \{v_2, v_3, y_2\}$ and $N(y_2) = \{v_2'', y_2', y_2''\}$. Thus, $f(N(v_2')-v_2) \cup f(y_2) = \{A,D,E\}$ and $1 \in \{f(y_2'), f(y_2'')\}$. If we can switch the colors of $v_2$ and $v_2''$, then we can color $u_1,u_2,u_3,u_4,u_5$ with $B,1,D,1,E$, which is a contradiction. Thus, $\{f(y_2'), f(y_2'')\} = \{1,B\}$. We recolor $v_2''$ with $D$ and color $u_1,u_2,u_3,u_4,u_5$ with $1,C,E,1,D$, which is again a contradiction.



 \textbf{Case 2.2.2.5:} $f(v_2) = B, f(v_4) = C, f(v_5) = A$. Color $u_1$ and $u_4$ with $1$. We know $|A(u_2)| \ge 2$, $|A(u_3)| \ge 1$, and $|A(u_5)| \ge 2$. Since $v_2v_5 \in G'$, $B \in A(u_5)$. However, $B \notin A(u_2)$ and $|A(u_2) \cup A(u_5)| \ge 3$. We are done by SDR.

 \textbf{Case 2.2.2.6:} $f(v_2) = C, f(v_4) = C, f(v_5) = B$. We first color $u_1$ and $u_4$ with $1$. Then $|A(u_2)| \ge 2$, $|A(u_3)| \ge 2$, and $|A(u_5)| \ge 1$. Since $v_2v_5 \in G'$, $B \in A(u_2)$. The only case we are not done by SDR is when $A(u_2) = A(u_3)$ and both have size two. Say $A(u_2) = A(u_3) = \{B,E\}$. Then we color $u_2$ and $u_5$ with $1$. Then $|A(u_1)| \ge 1$, $|A(u_3)| \ge 2$, and $|A(u_4)| \ge 2$. At least one of $A$ or $D$ is available at $u_4$ and thus $|A(u_3) \cup A(u_4)| \ge 3$. We are done by SDR. 

 \textbf{Case 2.2.2.7:} $f(v_2) = D, f(v_4) = C, f(v_5) = B$. Color $u_1$ and $u_4$ with $1$. We know $|A(u_2)| \ge 2$, $|A(u_3)| \ge 1$, and $|A(u_5)| \ge 1$. Since $v_2v_5 \in G'$, $B \in A(u_5)$ and $D \in A(u_5)$. Since $D \notin A(u_3)$, we know $|A(u_3) \cup A(u_5)| \ge 2$. Since $D \notin A(u_2)$, $|A(u_2) \cup A(u_5)| \ge 3$. We are done by SDR.

 \textbf{Case 2.3:} None of $v_2, v_3, v_4, v_5$ are colored with $1$. Since $v_2v_5, v_3v_4 \in G'$, we have nine subcases up to symmetry. Note that one of $f(v_1')$ and $f(v_1'')$, $f(v_2')$ and $f(v_2'')$, $f(v_2'')$ and $f(v_3')$, $f(v_5')$ and $f(v_4'')$, $f(v_4')$ and $f(v_4'')$ must be $1$.

 \textbf{Case 2.3.1:} $f(v_2) = A, f(v_3) = B, f(v_4) = A, f(v_5) = C$. Color $u_1$ and $u_4$ with $1$. Then $|A(u_2)| \ge 2$, $|A(u_3)| \ge 2$, and $|A(u_5)| \ge 2$. Since $v_2v_5 \in G'$, $C \in A(u_2)$ and $C \notin A(u_5)$. We are done by SDR since $|A(u_2) \cup A(u_5)| \ge 3$. 

 \textbf{Case 2.3.2:} $f(v_2) = A, f(v_3) = B, f(v_4) = A, f(v_5) = B$. Each $|A(u_i)| \ge 2$, where $i \in [5]$. Thus, the only case we are not done by SDR is when $A(u_1) = A(u_2) = A(u_3) = A(u_4) = A(u_5)$ and all have size two. Say $A(u_i) = \{D,E\}$, where $i \in [5]$. Since $v_2''v_4'' \in G'$, either $f(v_2'') = 1$ and $f(v_4'') = C$ or $f(v_2'') = C$ and $f(v_4'') = 1$. In the former case, $f(v_2') = f(v_3') = C$ and $f(v_4') = f(v_5') = f(w_4) = 1$; we also know $v_4''$ cannot be recolored with $D$ or $E$, and thus $\{f(w_4'), f(w_4'')\} = \{D,E\}$. If we can recolor $v_5$ with $D$ or $E$, then it is solved in Case 2.3.1. Thus, $\{f(w_5'), f(w_5'')\} = \{D,E\}$. We switch the colors of $v_4''$ and $v_5$, and color $u_1,u_2,u_3,u_4,u_5$ with $B,1,E,D,1$, which is a contradiction. In the latter case, $f(v_2') = f(v_3') = 1$ and $f(v_4') = f(v_5') = C$. We also know $1 \in \{f(w_5'), f(w_5'')\}$ and $1 \in \{f(z_4'), f(z_4'')\}$. If we can recolor $v_5$ with $D$ or $E$, then it is solved in Case 2.3.1. Similarly, we cannot recolor $v_4$ with $D$ or $E$ and thus we may assume $f(w_4) = D$ and $\{f(z_4'), f(z_4'')\} = \{f(w_5'), f(w_5'')\} = \{1,E\}$. If we can switch the colors of $v_4''$ and $v_5$, then it is solved in Case 2.2. Similarly, we cannot switch the colors of $v_4''$ and $v_5$ and thus $\{f(w_4'), f(w_4'')\} = \{A,B\}$. We recolor $w_4,v_4'',v_5$ with $1,E,1$, which is solved in Case 2.2.

 \textbf{Case 2.3.3:} $f(v_2) = A, f(v_3) = C, f(v_4) = B, f(v_5) = C$. Color $u_3$ and $u_5$ with $1$. Then $|A(u_1)| \ge 2$, $|A(u_2)| \ge 2$, and $|A(u_4)| \ge 2$. The only case we are not done by SDR is when $A(u_1) = A(u_2) = A(u_4)$ and all have size two. Thus, $A(u_1) = A(u_2) = A(u_4) = \{D,E\}$. Since $v_2''v_4'' \in G'$, $f(v_4'') = 1$, $f(v_2'') \neq 1,B$, $f(v_2') = f(v_3') = 1$, and $f(v_4') = A$. However, $B \in A(u_2)$ and $B \notin A(u_4)$, which is a contradiction.

 \textbf{Case 2.3.4:} $f(v_2) = A, f(v_3) = C, f(v_4) = B, f(v_5) = D$. Color $u_3$ and $u_5$ with $1$. Then $|A(u_1)| \ge 2$, $|A(u_2)| \ge 2$, and $|A(u_4)| \ge 1$. Since $v_2v_5 \in G'$, $D \in A(u_2)$ and $D \notin A(u_1)$. We are done by SDR since $|A(u_1) \cup A(u_2)| \ge 3$. 

 \textbf{Case 2.3.5:} $f(v_2) = C, f(v_3) = A, f(v_4) = C, f(v_5) = B$. Similarly to Case 2.3.1, we are done by SDR.

 \textbf{Case 2.3.6:} $f(v_2) = C, f(v_3) = A, f(v_4) = D, f(v_5) = B$. Color $u_1$ and $u_4$ with $1$. Each $|A(u_i)| \ge 1$, where $i \in [5]$. Since $v_2v_5 \in G'$, $B \in A(u_2)$ and $C \in A(u_5)$. Note that $C \notin A(u_3)$, we know $A(u_3) = \{B\}$ since otherwise we are done by SDR. Since $B,C \notin A(u_1)$, we can color $u_2, u_4$ with $1$ and $u_1,u_3,u_5$ by SDR, which is a contradiction.

 \textbf{Case 2.3.7:} $f(v_2) = B, f(v_3) = C, f(v_4) = B, f(v_5) = D$. Each $|A(u_i)| \ge 1$, where $i \in [5]$, and $|A(u_3)| \ge 2$. Since $v_2v_5 \in G'$, $D \in A(u_2)$. By $C,D \notin A(u_4)$, we claim $C \notin A(u_1)$ since otherwise we color $u_3,u_5$ with $1$ and $u_1, u_2, u_4$ with SDR. Thus, $\{f(v_1'), f(v_1'')\} = \{1,C\}$ and $A(u_4) = \{E\}$. It also implies $A(u_5) = \{E\}$ and $A(u_3) = \{D,E\}$ since otherwise we are done by SDR. Since $v_2''v_4'' \in G'$, $f(v_5') = C$ and $f(v_4'') = 1$. Then $f(v_4') = A$, $1 \in \{f(w_5'), f(w_5'')\}$, and $1 \in \{f(z_4'), f(z_4'')\}$. Say $f(w_5') = f(z_4') = 1$. If we can recolor $v_4$ with $C$ or $E$, then we can color $u_1, u_2, u_3, u_4, u_5$ with $E,D,1,B,1$, which is a contradiction. Therefore, $\{f(w_4), f(z_4'')\} = \{C,E\}$. If we can switch the colors of $v_4''$ and $v_5$, then it is solved in Case 2.2. Similarly, we cannot switch the colors of $v_4''$ and $v_4$, and we know $\{f(w_4'), f(w_4'')\} = \{B,D\}$. We recolor $w_4,v_4''$ with $1,E$, and recolor $v_5$ with $1$, which is solved in Case 2.2.

 \textbf{Case 2.3.8:} $f(v_2) = B, f(v_3) = C, f(v_4) = B, f(v_5) = C$. We know $|A(u_1)| \ge 1$, $|A(u_2)| \ge 1$, $|A(u_3)| \ge 2$, $|A(u_4)| \ge 2$, and $|A(u_5)| \ge 1$. Since $v_2''v_4'' \in G'$, at least one of $v_2''$ and $v_4''$ is colored with a $2$-color. Say $v_4''$ is colored with a $2$-color and this color can be $A$ or $D$ up to symmetry. We also know $f(v_5') = f(w_4) = f(v_4') = 1$. In the former case, $A(u_4) = A(u_5) = \{D,E\}$, $A(u_1) \subseteq \{D,E\}$. Thus, $A(u_3) = \{D,E\}$ since otherwise we color $u_2,u_5$ with $1$ and $u_1, u_3, u_4$ with SDR. If we can recolor $v_5$ with $D$ or $E$, then it is solved in Case 2.3.7. Therefore, $\{f(w_5'), f(w_5'')\} = \{D,E\}$. If we can recolor $v_4''$ with $D$ or $E$, then $A \in A(u_4)$ and $|A(u_3) \cup A(u_4)| \ge 3$ (done by SDR). Thus, $\{f(w_4'), f(w_4'')\} = \{D,E\}$. We switch the colors on $v_4''$ and $v_5$, and $C \in A(u_1)$, which implies $|A(u_1) \cup A(u_3) \cup A(u_4)| \ge 3$. We are done by SDR. In the latter case, $A(u_1) \subseteq \{D,E\}$, $A(u_5) = \{E\}$ and $A(u_4) = \{A,E\}$. Thus, $A(u_1) = \{E\}$ since otherwise we color $u_2,u_5$ with $1$ and $u_1,u_3,u_4$ with SDR. For a similar reason, $A(u_3) = \{A,E\}$. We are done by a similar argument to the former case.

 \textbf{Case 2.3.9:} $v_2,v_3,v_4,v_5$ has colors $B,D,C,E$. Similarly to Case 2.3.6, we are done by SDR. \hfill \qed


\subsection{Configuration ``$3|5|3$''}

\noindent\textbf{Proof of Lemma~\ref{no353}:}
Let the triangle be $u_1u_2u_3$ and their neighbors out of the triangle be $v_1, v_2, v_3$ respectively; let $v_2w_1w_2w_3w_4v_2$ be the five cycle and it is joined with $u_1u_2u_3$ by the edge $u_2v_2$; let $w_1'z_1z_2$ be the other triangle joined with $v_2w_1w_2w_3w_4v_2$ by the edge $w_1w_1'$ Let $N(v_1) = \{u_1, v_1', v_1''\}$, $N(v_2) = \{u_2, w_1, w_4\}$, $N(v_3) = \{u_3, v_3', v_3''\}$, $N(w_1) = \{v_2, w_1', w_2\}$, $N(w_2) = \{w_1, w_3, w_2'\}$, $N(w_3) = \{w_2, w_4\, w_3'\}$, $N(w_4) = \{v_2, w_3, w_4'\}$; let $N(w_1') = \{w_1, z_1, z_2\}$, $N(z_1) = \{w_1', z_2, z_1'\}$, and $N(z_2) = \{w_1', z_1, z_2'\}$. We contract $u_1, u_2, u_3$ to a single vertex $u$ to obtain a subcubic planar graph $G'$. By the inductive hypothesis, $G'$ has a good coloring $f$. By Lemma~\ref{only-case} and Remark~\ref{neighbor}, we can assume $f(v_1) = A$, $f(v_2) = B$, $f(v_3) = C$, $\{f(v_1'), f(v_1'')\} = \{f(w_1), f(w_4)\} = \{f(v_3'), f(v_3'')\} = \{1,D\}$. We extend $f$ to $G$.

 \textbf{Case 1:} $f(w_1) = D$ and $f(w_4) = 1$. If $1 \notin \{f(w_1'), f(w_2)\}$ then we can recolor $w_1$ with $1$ and it contradicts our assumption. Thus, $1 \in \{f(w_2), f(w_1')\}$.

 \textbf{Case 1.1:} $f(w_2) = 1$. If $v_2$ can be recolored with $A$ or $C$ then we can color $u_1, u_2, u_3$ with $1,E,B$, which is a contradiction. Thus, $A,C \in \{f(w_1'), f(w_3), f(w_4')\}$. By symmetry, we have the following cases.

 \textbf{Case 1.1.1:} $f(w_1') = A$ and $f(w_3) = C$. Then $1 \in \{f(z_1), f(z_2)\}$ since otherwise we recolor $w_1'$ with $1$, which contradicts our assumption. Without loss of generality, say $f(z_1) = 1$. Then $f(w_4')$ can be $A, D$ or $E$.

 \textbf{Case 1.1.1.1:} $f(w_4') = A$. If we can switch the colors on $v_2$ and $w_1$, then we can color $u_1, u_2, u_3$ with $B,1,E$, which is a contradiction. If we can recolor $w_1$ with $E$, then we can color $u_1, u_2, u_3$ with $E,D,1$, which is a contradiction. Therefore, $\{f(w_2'), f(z_2)\} = \{B,E\}$. Furthermore, we know $\{f(z_1'), f(z_2')\} = \{C,D\}$ since otherwise we can recolor $w_1'$ with $C$ or $D$, $w_1$ with $A$, $v_2$ with $E$, and color $u_1, u_2, u_3$ with $1, D, B$. When $f(w_2') = B (E)$ and $f(z_2) = E (B)$, then we recolor $w_1'$ with $B (E)$, $w_1$ with $A$, $v_2$ with $E (B)$, and color $u_1, u_2, u_3$ with $1,D,B (E)$, which is a contradiction.

 \textbf{Case 1.1.1.2:} $f(w_4') = D$. If we can recolor $w_1$ with $B$ and $v_2$ with $E$, then we can color $u_1, u_2, u_3$ with $B,D,1$. Thus, $B \in \{f(z_2), f(w_2')\}$. Similarly to Case 1.1.1.1, we know $\{f(z_2), f(w_2')\} = \{B, E\}$, $\{f(z_1'), f(z_2')\} = \{C,D\}$. When $f(w_2') = B (E)$ and $f(z_2) = E (B)$, we recolor $w_1'$ with $B (E)$, $w_1$ with $A$, $v_2$ with $E (B)$, and color $u_1, u_2, u_3$ with $1, D, B (E)$, which is a contradiction.

 \textbf{Case 1.1.1.3:} $f(w_4') = E$. Similarly to Case 1.1.1.1, $\{f(w_2'), f(z_2)\} = \{B,E\}$ and $\{f(z_1'), f(z_2')\} = \{C,D\}$. When $f(w_2') = B (E)$ and $f(z_2) = E (B)$, then we recolor $w_1'$ with $B (E)$, $w_1$ with $A$, $v_2$ with $D (B)$, and color $u_1, u_2, u_3$ with $1,E (D),B (E)$, which is a contradiction. 

 \textbf{Case 1.1.2:} $f(w_1') = A$ and $f(w_4') = C$. Then $w_3$ can be $A$ or $E$. We know $1 \in \{f(z_1), f(z_2)\}$ since otherwise we can recolor $w_1'$ with $1$, which contradicts our assumption. Say $f(z_1) = 1$.

 \textbf{Case 1.1.2.1:} $f(w_3) = A$. Similarly to Case 1.1.1, we know $\{f(z_2), f(w_2')\} = \{B, E\}$. Then we can recolor $w_1$ with $C$, and color $u_1, u_2, u_3$ with $1,D,E$, which is a contradiction.

 \textbf{Case 1.1.2.2:} $f(w_3) = E$. Similarly to Case 1.1.1, we know $B \in \{f(z_2), f(w_2')\}$. If $C \notin \{f(z_2), f(w_2')\}$, then we can recolor $w_1$ with $C$, and color $u_1, u_2, u_3$ with $D,1,E$. Therefore, $\{f(z_2), f(w_2')\} = \{C,B\}$. If we can recolor $w_1'$ with $D (E)$ and $w_1$ with $A$, then it contradicts our assumption of the case. Therefore, $\{f(z_1'), f(z_2')\} = \{D,E\}$. When $f(z_2) = C (B)$ and $f(w_2') = B (C)$, we recolor $w_1'$ with $B (C)$, $w_1$ with $A$, $v_2$ with $D$, and color $u_1, u_2, u_3$ with $E,1,B$, which is a contradiction.

 \textbf{Case 1.1.3:} $f(w_3) = A$ and $f(w_4') = C$. By symmetry, the cases when $f(w_1') = A,C$ were already solved in previous cases. Thus, $f(w_1')$ can be $1$ or $E$.

 \textbf{Case 1.1.3.1:} $f(w_1') = 1$. Similarly to previous cases, we know $\{B,E\} \subseteq \{f(z_1), f(z_2), f(w_2')\}$. If we can recolor $w_1$ with $C$, then we can color $u_1, u_2, u_3$ with $D,1,E$, which is a contradiction. Thus, $\{f(z_1), f(z_2), f(w_2')\} = \{C,B,E\}$.

If $\{f(z_1), f(z_2)\} = \{C,B\}$ (say $f(z_1) = C$ and $f(z_2) = B$) and $f(w_2') = E$, then $f(z_1') = 1$ since otherwise we can recolor $z_1$ with $1$, $w_1'$ with $C$, and it was already solved in Case 1.1.1 by symmetry. We also know $f(z_2') = A$ since otherwise we can recolor $w_1'$ with $A$ and it was already solved in Case 1.1.2.1. Then we recolor $z_2$ with $1$, $w_1'$ with $E$, $w_1$ with $B$, $v_2$ with $D$, and color $u_1, u_2, u_3$ with $E,1,B$, which is a contradiction.

If $\{f(z_1), f(z_2)\} = \{C,E\}$ (say $f(z_1) = C$ and $f(z_2) = E$) and $f(w_2') = B$, then similarly to previous paragraph we know that $f(z_1') = 1$ and $f(z_2') = A$. Then we recolor $z_2$ with $1$, $w_1'$ with $B$, $w_1$ with $E$, $v_2$ with $D$, and color $u_1, u_2, u_3$ with $E,1,B$, which is a contradiction. 

If $\{f(z_1), f(z_2)\} = \{B,E\}$ (say $f(z_1) = B$ and $f(z_2) = E$) and $f(w_2') = C$, then similarly to previous paragraph we know that $\{f(z_1'), f(z_2')\} = \{A,C\}$. Then we recolor $z_2$ with $1$, $w_1'$ with $D$, $w_1$ with $E$, and color $u_1, u_2, u_3$ with $E,D,1$, which is a contradiction.

 \textbf{Case 1.1.3.2:} $f(w_1') = E$. Then $1 \in \{f(z_1), f(z_2)\}$ since otherwise we recolor $w_1'$ with $1$ and it was solved in Case 1.1.3.1. Say $f(z_1) = 1$. We also know $C \in \{f(z_2), f(w_2')\}$ since otherwise we recolor $w_1$ with $C$ and it was solved in Case 1.1.1.1. We claim $B \in \{f(z_2), f(w_2')\}$ since otherwise we switch the colors of $v_2$ and $w_1$, and color $u_1, u_2, u_3$ with $E,1,B$. Furthermore, $A \in \{f(z_1'), f(z_2')\}$ since otherwise we recolor $w_1'$ with $A$ and it was solved in Case 1.1.2.1. We claim $D \in \{f(z_1'), f(z_2')\}$ since otherwise we can recolor $w_1'$ with $D$, $w_1$ with $B$, $v_2$ with $E$, and color $u_1, u_2, u_3$ with $B,D,1$, which is a contradiction. When $f(z_2) = C (B)$ and $f(w_2') = B (C)$, we recolor $w_1'$ with $B (C)$, $w_1$ with $E$, $v_2$ with $D$, and color $u_1, u_2, u_3$ with $E,1,B$, which is a contradiction.

 \textbf{Case 1.2:} $f(w_1') = 1$ and $f(w_2) \neq 1$. We know $f(w_2') = 1$ since otherwise we can recolor $f(w_2)$ with $1$ and it contradicts our assumption. If $v_2$ can be recolored with $A$ or $C$ then we color $u_1, u_2, u_3$ with $1,E,B$, which is a contradiction. Thus, $A,C \in \{f(w_2), f(w_3), f(w_4')\}$. By symmetry, we have the following cases.

 \textbf{Case 1.2.1:} $f(w_2) = A$ and $f(w_3) = C$ (or $f(w_2) = C$ and $f(w_3) = A$). Then $w_4'$ can be $A,D$, or $E$. If we can recolor $w_1$ with $E$, then it contradicts our assumption. Thus, $E \in \{f(z_1), f(z_2)\}$. If we can switch the colors of $w_1$ and $w_1'$, then it contradicts our assumption. Thus, $D \in \{f(z_1'), f(z_2')\}$. 

 \textbf{Case 1.2.1.1:} $f(w_4') = A$ or $E$. If we can switch the colors of $v_2$ and $w_1$, then we can color $u_1, u_2, u_3$ with $B,1,E$, which is a contradiction. Thus, $\{f(z_1), f(z_2)\} = \{B,E\}$. Say $f(z_1) = B$ and $f(z_2) = E$. If we can recolor $w_1'$ with $C$, then we can recolor $w_1$ with $1$, and it contradicts our assumption. Thus, $\{f(z_1'), f(z_2')\} = \{C,D\}$. Then we switch the colors of $w_1'$ and $z_2$, recolor $w_1$ with $1$, and color $u_1,u_2,u_3$ with $E,D,1$, which is a contradiction.

 \textbf{Case 1.2.1.2:} $f(w_4') = D$. If $B \notin \{f(z_1), f(z_2)\}$, then we can recolor $w_1'$ with $B$, $w_1$ with $1$, $v_2$ with $E$, and color $u_1, u_2, u_3$ with $B,D,1$, which is a contradiction. Thus, $\{f(z_1), f(z_2)\} = \{B,E\}$. The remaining proof is the same with Case 1.2.1.1.


 \textbf{Case 1.2.2:} $f(w_2) = A$ and $f(w_4') = C$. Then $w_3$ must be $E$. Similarly to Case 1.2.1, we know $D \in \{f(z_1'), f(z_2')\}$ and $B \in \{f(z_1), f(z_2)\}$. Furthermore, if we can recolor $w_1$ with $C$, then it contradicts our assumption. Thus, $\{f(z_1), f(z_2)\} = \{C,B\}$. Say $f(z_1) = C$ and $f(z_2)$ is $B$. If we can recolor $w_1'$ with $E$, then we can recolor $w_1$ with $1$ and it contradicts our assumption. Thus, $\{f(z_1'), f(z_2')\} = \{D,E\}$. Then we switch the colors of $z_1$ and $w_1'$, recolor $w_1$ with $1$, and color $u_1, u_2, u_3$ with $1,D,E$.

 \textbf{Case 1.2.3:} $f(w_3) = A$ and $f(w_4') = C$. Then $w_2$ can be $C$ or $E$. The case when $f(w_2) = C$ was solved in Case 1.2.1 by symmetry. Now, $f(w_2) = E$. Similarly to Case 1.2.2, we know that $\{f(z_1), f(z_2)\} = \{C,B\}$. Say $f(z_1) = C$ and $f(z_2)$ is $B$. Similarly to Case 1.2.1, $D \in \{f(z_1'), f(z_2')\}$. If we can recolor $w_1'$ with $A$, then we can recolor $w_1$ with $1$ and it contradicts our assumption. Therefore, $\{f(z_1'), f(z_2')\} = \{A,D\}$. Then we switch the colors of $z_1$ and $w_1'$, recolor $w_1$ with $1$, and color $u_1, u_2, u_3$ with $1,D,E$.

 \textbf{Case 2:} $f(w_1) = 1$ and $f(w_4) = D$. Then we know $1 \in \{f(w_4'), f(w_3)\}$ since otherwise we can recolor $w_4$ with $1$ and it contradicts our assumption. 

 \textbf{Case 2.1:} $f(w_3) = 1$. Similarly to Case 1, we also know $\{A,C\} \subseteq \{f(w_1'), f(w_2), f(w_4')\}$.

 \textbf{Case 2.1.1:} $f(w_1') = A$ and $f(w_2) = C$ (or $f(w_1') = C$ and $f(w_2) = A$). If we can switch the colors of $v_2$ and $w_1$, then it contradicts our assumption; if we can recolor $w_1$ with $E$, then we recolor $v_2$ with $1$, and color $u_1, u_2, u_3$ with $1,B,E$. Therefore, $B,E \in \{f(z_1), f(z_2), f(w_2')\}$. If we can switch the colors of $w_1$ and $w_1'$, then we can recolor $v_2$ with $1$, and color $u_1, u_2, u_3$ with $1,B,E$. Therefore, either $f(w_2') = A$ or $1 \in \{f(z_1), f(z_2)\}$. Therefore, We know by symmetry $f(z_1), f(z_2), f(w_2')$ can be $1,E,B$ or $1,B,E$ or $B,E,A$, and $1 \in f(N(w_2')-w_2)$ since otherwise we can recolor $w_2'$ with 1. We also know $f(w_4')$ can be $1,A,C,$ or $E$.

 \textbf{Case 2.1.1.1:} $f(w_4') = 1$. If we can recolor $w_1'$ with $D$, then it contradicts our assumption that $\{A,C\} \subseteq \{f(w_1'), f(w_2), f(w_4')\}$. Thus, $D \in \{f(z_1'), f(z_2')\}$. When $f(z_1), f(z_2), f(w_2')$ is $B,E,A$, we know $f(z_1') = f(z_2') = 1$ since otherwise we can recolor $z_1$ or $z_2$ with $1$, and it contradicts our assumption on $f(z_1), f(z_2), f(w_2')$. However, it is a contradiction with $D \in \{f(z_1'), f(z_2')\}$. 

When $f(z_1), f(z_2), f(w_2')$ is $1,E (B),B (E)$,  we know $B (E) \in \{f(z_1'), f(z_2')\}$ since otherwise we can recolor $w_1'$ with $B (E)$ and it contradicts our assumption that $\{A,C\} \subseteq \{f(w_1'), f(w_2), f(w_4')\}$. We know $A, B, E \in f(N(w_4')-w_4) \cup f(w_3')$ since otherwise we can recolor $w_4$ with $A$, or $B,$ or $E$, recolor $v_2$ with $D$, and color $u_1, u_2, u_3$ with $E,1,B$, which is a contradiction. Therefore, we know $f(N(w_4')-w_4) \cup f(w_3') = \{A,B,E\}$. If we can switch the colors of $w_2$ and $w_4$, then we can color $u_1, u_2, u_3$ with $1,D,E$, which is a contradiction. Therefore, $f(N(w_2')-w_2) = \{1,D\}$. If we can switch the colors of $w_1'$ and $w_2$, then we can recolor $w_4$ with $C$ and color $u_1, u_2, u_3$ with $1,D,E$, which is a contradiction. Therefore, we must have $f(w_3') = A$ and $f(N(w_4')-w_4) = \{B,E\}$. If we can recolor $w_3$ with $A$ and recolor $w_3'$ with $1$ or recolor $w_3$ with $E$, then we can switch the colors of $w_1$ and $w_2$, and recolor $v_2$ with $1$, and color $u_1,u_2, u_3$ with $1,B,E$, which is a contradiction. Therefore, $f(N(w_3')-w_3) = \{1,E\}$. We recolor $w_1'$ with $C$, $w_1$ with $A$, $w_2$ with $1$, $w_3$ with $C$, $v_2$ with $1$, and color $u_1, u_2, u_3$ with $1,B,E$, which is a contradiction.

 \textbf{Case 2.1.1.2:} $f(w_4') = A$. We know $1 \in f(N(w_4')-w_4)$ since otherwise we can recolor $w_4'$ with $1$ and it was solved in Case 2.1.1.1. By Case 2.1.1, we know $f(z_1), f(z_2), f(w_2')$ can be $1,B,E$ or $1,E,B$ or $B,E,A$. We also know either $f(w_3') = B$ and $f(N(w_4')-w_4) = \{1,E\}$ or $f(w_3') = E$ and $f(N(w_4')-w_4) = \{1,B\}$, since otherwise we can recolor $w_4$ with $B$ or $E$, $v_2$ with $D$, and color $u_1, u_2, u_3$ with $E,1,B$, which is a contradiction. Furthermore, we must have $D \in f(N(w_3')-w_3)$ since otherwise we can switch the colors of $w_4$ and $w_3$, and color $u_1, u_2, u_3$ with $1,D,E$, which is a contradiction. 

When $f(z_1), f(z_2), f(w_2')$ is $B,E,A$, we recolor $v_2,w_4,w_3,w_2,w_1$ with $1,C,E(B),1,D$, and color $u_1,u_2,u_3$ with $E,B,1$, which is a contradiction. When $f(z_1), f(z_2), f(w_2')$ is $1,B (E), E(B)$, by a recoloring similarly to the previous sentence, we know $f(w_3') = E (B)$; we have $\{f(z_1'), f(z_2')\} = \{E(B),D\}$ since otherwise we can recolor $w_1',w_1,v_2$ with $D (E,B),A,1$, and color $u_1, u_2, u_3$ with $E,B,1$, which is a contradiction. Therefore, we can recolor $v_2,w_4,w_3,w_2,w_1,w_1'$ with $1,D,C,1,A,C$, and color $u_1, u_2, u_3$ with $E,B,1$, which is a contradiction. This implies $f(N(w_3')-w_3) = \{C,D\}$. However, we can recolor $v_2,w_4,w_3,w_2,w_1,w_1'$ with $1,D,B(E),1,A,C$, and color $u_1, u_2, u_3$ with $E,B,1$, which still reaches a contradiction.



 \textbf{Case 2.1.1.3:} $f(w_4') = E$. Then we know $1 \in f(N(w_4') - w_4)$ since otherwise we can recolor $w_4'$ with $1$ and it was solved in Case 2.1.1.1. Similarly to Case 2.1.1.1, we know by symmetry $f(z_1), f(z_2), f(w_2')$ can be $1,B,E$ or $1,E,B$ or $B,E,A$. Furthermore, either $f(w_3) = A$ and $f(N(w_4') - w_4) = \{1,B\}$ or $f(w_3) = B$ and $f(N(w_4') - w_4) = \{1,A\}$. Similarly to Case 2.1.1.1, when $f(z_1), f(z_2), f(w_2')$ is $B,E,A$, $f(z_1') = f(z_2') = 1$; we recolor $w_1'$ with $D$ and $v_2$ with $A$, and color $u_1, u_2, u_3$ with $E,1,B$, which is a contradiction.

Similarly to Case 2.1.1.1, when $f(z_1), f(z_2), f(w_2')$ is $1,B (E),E (B)$, $\{f(z_1'), f(z_2')\} = \{D,E(B)\}$. If we can switch the colors of $w_4$ and $w_3$, then we can color $u_1, u_2, u_3$ with $1,D,E$, which is a contradiction. Therefore, $D \in f(N(w_3')-w_3)$. We also claim that $C \in f(N(w_3')-w_3)$ since otherwise we recolor $w_1',w_1, w_2, w_3, v_2$ with $C,A,1,C,1$, and color $u_1, u_2, u_3$ with $1,B,E$, which is a contradiction. Thus, $f(N(w_3')-w_3) = \{C,D\}$ and we recolor $w_3,w_3',w_4$ with $A,1,1$, and color $u_1, u_2, u_3$ with $1,D,E$, which is a contradiction.

 \textbf{Case 2.1.1.4:} $f(w_4') = C$. Similarly to Case 2.1.1.1, we know $f(N(w_4')-w_4) \cup f(w_3') = \{A,B,E\}$. Then we recolor $w_4'$ with $1$, $v_2$ with $C$, and color $u_1, u_2, u_3$ with $E,B,1$, which is a contradiction.

 \textbf{Case 2.1.2:} $f(w_1') = A$ and $f(w_4') = C$. Then $f(w_2)$ can be $C$ or $E$. The case when $f(w_2) = C$ was already covered in Case 2.1.1.4. Thus, $f(w_2) = E$. Similarly to Case 2.1.1.1, we know by symmetry $f(z_1), f(z_2), f(w_2')$ can be $1,B,C$ or $1,C,B$ or $B,C,A$. Similarly to Case 2.1.1.1, when $f(z_1), f(z_2), f(w_2')$ is $B,C,A$, $f(z_1') = f(z_2') = 1$; we recolor $w_1'$ with $D$ and $v_2$ with $A$, and color $u_1, u_2, u_3$ with $E,1,B$.

Similarly to Case 2.1.1.1, when $f(z_1), f(z_2), f(w_2')$ is $1,B (C),C (B)$, $\{f(z_1'), f(z_2')\} = \{D,C(B)\}$. We also know either $f(N(w_4')-w_4) = \{1,B\}$ and $f(w_3') = A$  or $f(N(w_4')-w_4) = \{1,A\}$ and $f(w_3') = B$. No matter which case occur, $D \in \{N(w_3')-w_3\}$ since otherwise we can switch the colors of $w_3$ and $w_4$, and color $u_1, u_2, u_3$ with $E,D,1$, which is a contradiction; we also know $1 \in \{N(w_3')-w_3\}$ since otherwise we recolor $w_3', w_3, w_4$ with $1,A,1$, and color $u_1, u_2, u_3$ with $E,D,1$, which is a contradiction. Thus, $\{N(w_3')-w_3\} = \{1,D\}$. We recolor $w_1',w_1, w_2, w_3, v_2$ with $E,A,1,B,1$, and color $u_1, u_2, u_3$ with $E,B,1$, which is a contradiction.

 \textbf{Case 2.1.3:} $f(w_2) = A$ and $f(w_4') = C$. Then $f(w_1')$ can be $C,D,E$. The case when $f(w_1') = C$ was covered in Case 2.1.1.1 by symmetry. Thus, we have two cases.

 \textbf{Case 2.1.3.1:} $f(w_1') = D$. Similarly to Case 2.1.1.1, we know $f(z_1), f(z_2), f(w_2')$ can be $C,B,E$ or $C,E,B$ or $B,E,C$, and $f(z_1') = f(z_2') = 1$. Furthermore, we know $1 \in f(N(w_2')-w_2)$ since otherwise we can recolor $w_2'$ with $1$ and it is a contradiction with our assumption on $f(w_2')$. Similarly to Case 2.1.1.1, we also know either $f(N(w_4')-w_4) = \{1,B\}$ and $f(w_3') = E$ or $f(N(w_4')-w_4) = \{1,E\}$ and $f(w_3') = B$. 

When $f(z_1), f(z_2), f(w_2')$ is $B,E,C$, we recolor $w_1'$ with $C$ and it was solved in Case 2.1.1.1 by symmetry. Thus, we may assume $f(z_1), f(z_2), f(w_2')$ is $C,B (E),E (B)$. If $f(N(w_4')-w_4) = \{1,B\}$ and $f(w_3') = E$, then $f(N(w_2')-w_2) = \{1,C\}$ since otherwise we recolor $w_2$ with $C$ and it contradicts our assumption that $\{A,C\} \subseteq \{f(w_1'), f(w_2), f(w_4')\}$; we recolor $w_1',w_1, w_2,v_2$ with $1,A,B (E),1$, and color $u_1, u_2, u_3$ with $B,E,1$, which is a contradiction. If $f(N(w_4')-w_4) = \{1,E\}$ and $f(w_3') = B$, then we again know $f(N(w_2')-w_2) = \{1,C\}$; then we recolor $w_1',w_2,w_4$ with $A,D,A$, and color $u_1, u_2, u_3$ with $1,D,E$, which is a contradiction.

 \textbf{Case 2.1.3.2:} $f(w_1') = E$. Similarly to Case 2.1.1.1, we know $f(z_1), f(z_2), f(w_2')$ can be $C,B,E$ or $1,C,B$ or $1,B,C$. Furthermore, when $\{f(z_1), f(z_2)\} = \{C,B\}$ we know $f(z_1') = f(z_2') = 1$; then we recolor $w_1'$ with $D$ and it was solved in Case 2.1.3.1. Now $f(z_1), f(z_2), f(w_2')$ is $1,C (B),B (C)$. We know $D \in \{f(z_1'), f(z_2')\}$ since otherwise we recolor $w_1'$ with $D$ and it was already solved in Case 2.1.3.1. Similarly to Case 2.1.1.1, we know $\{f(z_1'), f(z_2')\} = \{B (C), D\}$, and either $f(N(w_4')-w_4) = \{1,B\}$ and $f(w_3') = E$ or $f(N(w_4')-w_4) = \{1,E\}$ and $f(w_3') = B$. We must have $f(N(w_4')-w_4) = \{1,E\}$ and $f(w_3') = B$ since otherwise we can switch the colors of $v_2$ and $w_2$, and color $u_1, u_2, u_3$ with $B,1,E$, which is a contradiction. Then we recolor $w_1',w_2, w_4$ with $A,E,A$, and color $u_1, u_2, u_3$ with $1,D,E$, which is a contradiction.

 \textbf{Case 2.2:} $f(w_4') = 1$. Similarly to Case 1, we also know $\{A,C\} \subseteq \{f(w_1'), f(w_2), f(w_3)\}$. 

 \textbf{Case 2.2.1:} $f(w_1') = A$ and $f(w_2) = C$. We know $f(w_3) \neq 1$ since it was solved in Case 2.1.1.1; we also know $f(w_3') = 1$ since otherwise we recolor $w_3$ with $1$ and it was solved in Case 2.1.1.1. Thus, $f(w_3)$ can be $A$ or $E$. If we can switch the colors of $v_2$ and $w_1$, then we color $u_1, u_2, u_3$ with $B,E,1$, which is a contradiction. Therefore, $B \in \{f(z_1), f(z_2), f(w_2')\}$.  If we can switch the colors of $w_1$ and $w_2$, then we recolor $v_2$ with $1$, and color $u_1, u_2, u_3$ with $B,E,1$, which is a contradiction. Thus, either $f(w_2') = 1$ or $C \in \{f(z_1), f(z_2)\}$. 

 \textbf{Case 2.2.1.1:} $f(w_3) = A$. If we can recolor $w_1$ with $E$, then we recolor $v_2$ with $1$, and color $u_1, u_2, u_3$ with $1,B,E$, which is a contradiction. Thus, $E \in \{f(z_1), f(z_2), f(w_2')\}$ and we know by symmetry $f(z_1), f(z_2), f(w_2')$ can be $B,E,1$ or $C,E,B$ or $C,B,E$.  If we can recolor $w_4$ with $B (E)$ and $v_2$ with $D$, then we color $u_1, u_2, u_3$ with $B,1,E$, which is a contradiction. Thus, $f(N(w_4')-w_4) = \{B,E\}$. When $f(z_1), f(z_2), f(w_2')$ is $B,E,1$, $f(N(w_2')-w_2) = \{B,E\}$ since otherwise we recolor $w_2$ with $E (B)$, $v_2$ with $C$, and color $u_1, u_2, u_3$ with $B,1,E$, which is a contradiction. We recolor $w_2$ with $D$, $w_4$ with $C$, and color $u_1, u_2, u_3$ with $B,1,E$, which is a contradiction. Similarly, when $f(z_1), f(z_2), f(w_2')$ is $C,E(B),B(E)$,$f(N(w_2')-w_2) = \{1,E (B)\}$, we recolor $w_2$ with $D$, $w_4$ with $C$, and color $u_1, u_2, u_3$ with $B,1,E$, which is a contradiction.

 \textbf{Case 2.2.1.2:} $f(w_3) = E$. Similarly to Case 2.2.1.1, $f(N(w_4')-w_4) = \{A,B\}$. If we can switch the colors of $w_1$ and $w_1'$, then we recolor $v_2$ with $1$, and color $u_1, u_2,u_3$ with $1,B,E$, which is a contradiction. Thus, either $f(w_2') = A$ or $1 \in \{f(z_1), f(z_2)\}$. Recall either $f(w_2') = 1$ or $C \in \{f(z_1), f(z_2)\}$ and thus $f(z_1),f(z_2),f(w_2')$ can be $1,B,1$ or $1,C,B$ or $B,C,A$ by symmetry. When $f(z_1),f(z_2),f(w_2')$ is $B,C,A$, $f(z_1) = f(z_2) = 1$ and we recolor $w_1'$ with $D$, $v_2$ with $A$, and color $u_1, u_2, u_3$ with $B,1,E$, which is a contradiction. When $f(z_1),f(z_2),f(w_2')$ is $1,B,1$, $\{f(z_1'),f(z_2')\} = \{D,E\}$ since otherwise we recolor $w_1'$ with $D$ or $E$ and it contradicts our assumption that $\{A,C\} \subseteq \{f(w_1'), f(w_2), f(w_3)\}$. If we can switch the colors of $v_2$ and $w_2$, then we color $u_1, u_2, u_3$ with $B,1,E$, which is a contradiction; if we can switch the colors of $w_2$ and $w_2$, then we color $u_1, u_2, u_3$ with $1,D,E$, which is a contradiction. Thus, $f(N(w_2')-w_2) = \{B,D\}$. We claim $f(N(w_3')-w_3) = \{A,D\}$ since otherwise we recolor $w_3$ with $A$ or $D$, $w_4$ with $E$, and color $u_1, u_2, u_3$ with $1,D,E$, which is a contradiction. We recolor $w_1',w_1,w_2,w_3,v_2$ with $C,A,E,C,1$, and color $u_1, u_2, u_3$ with $B,E,1$, which is a contradiction. When $f(z_1),f(z_2),f(w_2')$ is $1,C,B$, we know again $\{f(z_1'),f(z_2')\} = \{D,E\}$; we recolor $w_1',w_1, v_2$ with $B,A,1$, and color $u_1, u_2, u_3$ with $B,E,1$, which is a contradiction. 

 \textbf{Case 2.2.2:} $f(w_1') = A$ and $f(w_3) = C$. Then $f(w_2)$ must be $E$. Similarly to Case 2.2.1, we know $f(z_1), f(z_2),f(w_2')$ can be $1,B,1$ or $1,E,B$ or $B,E,A$ by symmetry. When $f(z_1), f(z_2),f(w_2')$ is $B,E,A$, similarly to Case 2.2.1 we know $f(z_1') = f(z_2') = 1$; we recolor $w_1'$ with $D$, $v_2$ with $A$, and color $u_1, u_2, u_3$ with $B,1,E$, which is a contradiction. When $f(z_1), f(z_2),f(w_2')$ is $1,B,1$, similarly to Case 2.2.1.2 we know $\{f(z_1'), f(z_2')\} = \{C,D\}$, $D \in f(N(w_2')-w_2) $, $f(N(w_4') - w_4) = \{A,B\}$. If we can recolor $w_3$ with $B (A)$, $v_2$ with $C$, then we color $u_1, u_2, u_3$ with $B,1,E$, which is a contradiction. Thus, $f(N(w_3') - w_3) = \{A,B\}$. We also know $f(N(w_2')-w_2) = \{B,D\}$ since otherwise we recolor $w_2,w_3,v_2$ with $B,E,C$, and color $u_1, u_2, u_3$ with $B,1,E$, which is a contradiction. We recolor $w_1',w_1, w_2, w_3, v_2$ with $E,A,C,E,1$, and color $u_1, u_2, u_3$ with $1,B,E$, which is a contradiction. When $f(z_1), f(z_2),f(w_2')$ is $1,E,B$, we again know $\{f(z_1'), f(z_2')\} = \{C,D\}$; we recolor $w_1',w_1,v_2$ with $B,A,1$, and color $u_1, u_2, u_3$ with $1,B,E$, which is a contradiction.

 \textbf{Case 2.2.3:} $f(w_2) = A$ and $f(w_3) = C$. Then $f(w_1')$ can be $D$ or $E$. 

 \textbf{Case 2.2.3.1:} $f(w_1') = D$. Similarly to Case 2.2.1, we know $f(z_1), f(z_2), f(w_2')$ can be $B,E,1$ or $A,B,E$ or $A,E,B$ by symmetry. Similarly to Case 2.2.1, we know $f(z_1') = f(z_2') = 1$. In all possibilities of $f(z_1), f(z_2), f(w_2')$, we recolor $w_1'$ with $C$ and it was solved in Case 2.2.1.1 by symmetry.

 \textbf{Case 2.2.3.2:} $f(w_1') = E$. Similarly to Case 2.2.1, we know $f(z_1), f(z_2), f(w_2')$ can be $A,B,E$ or $1,A,B$ or $1,B,1$ by symmetry. When $f(z_1), f(z_2), f(w_2')$ is $A,B,E$, we know $f(z_1') = f(z_2') = 1$; we recolor $w_1'$ with $C$ and it was solved in Case 2.2.1.1. When $f(z_1), f(z_2), f(w_2')$ is $1,A,B$, similarly to Case 2.2.1 we know $\{f(z_1'), f(z_2')\} = \{C,D\}$, we recolor $w_1',w_1,v_2$ with $B,E,1$, and color $u_1, u_2, u_3$ with $1,B,E$, which is a contradiction. When $f(z_1), f(z_2), f(w_2')$ is $1,B,1$, we again have $\{f(z_1'), f(z_2')\} = \{C,D\}$; similarly to Case 2.2.2, $f(N(w_2') - w_2) = \{B,D\}$, $f(N(w_3') - w_3) = \{D,E\}$, $f(N(w_4') - w_4) = \{B,E\}$; we recolor $w_1',w_1, w_2, w_3, v_2$ with $A,E,C,B,1$, and color $u_1, u_2, u_3$ with $1,B,E$, which is a contradiction. \hfill \qed

\subsection{Boundary vertices, pendant vertices, and separating cycles}
We prove the non-existence of $1$-cuts, $2$-cuts, and configurations in Lemmas~\ref{no3|3}-\ref{no4|6}, which guarantee that the boundary vertices and pendant vertices are different from those in configurations in Class One and Class Two. All cases that cannot be covered are listed in Figure~\ref{separating}. Computer check shows that none of them exists. Note that all the cases in Figure~\ref{separating} except the first case have a $3$-cut.
\begin{figure}
\vspace{-20mm}
\begin{center}
\hspace{-15mm}
\includegraphics[scale=0.9]{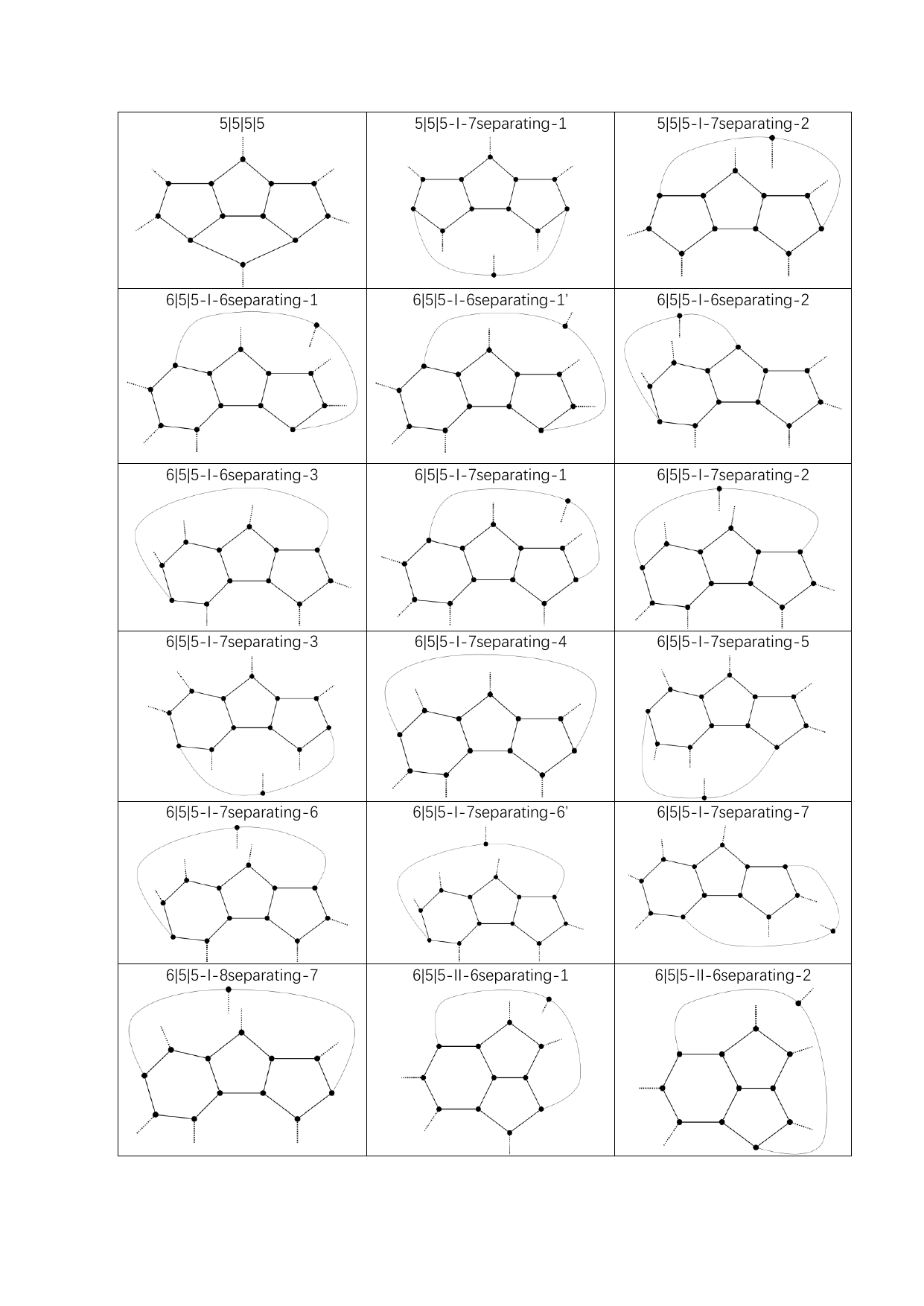}
\end{center}
\end{figure}

\begin{figure}
\vspace{-20mm}
\begin{center}
\hspace{-15mm}
\includegraphics[scale=0.9]{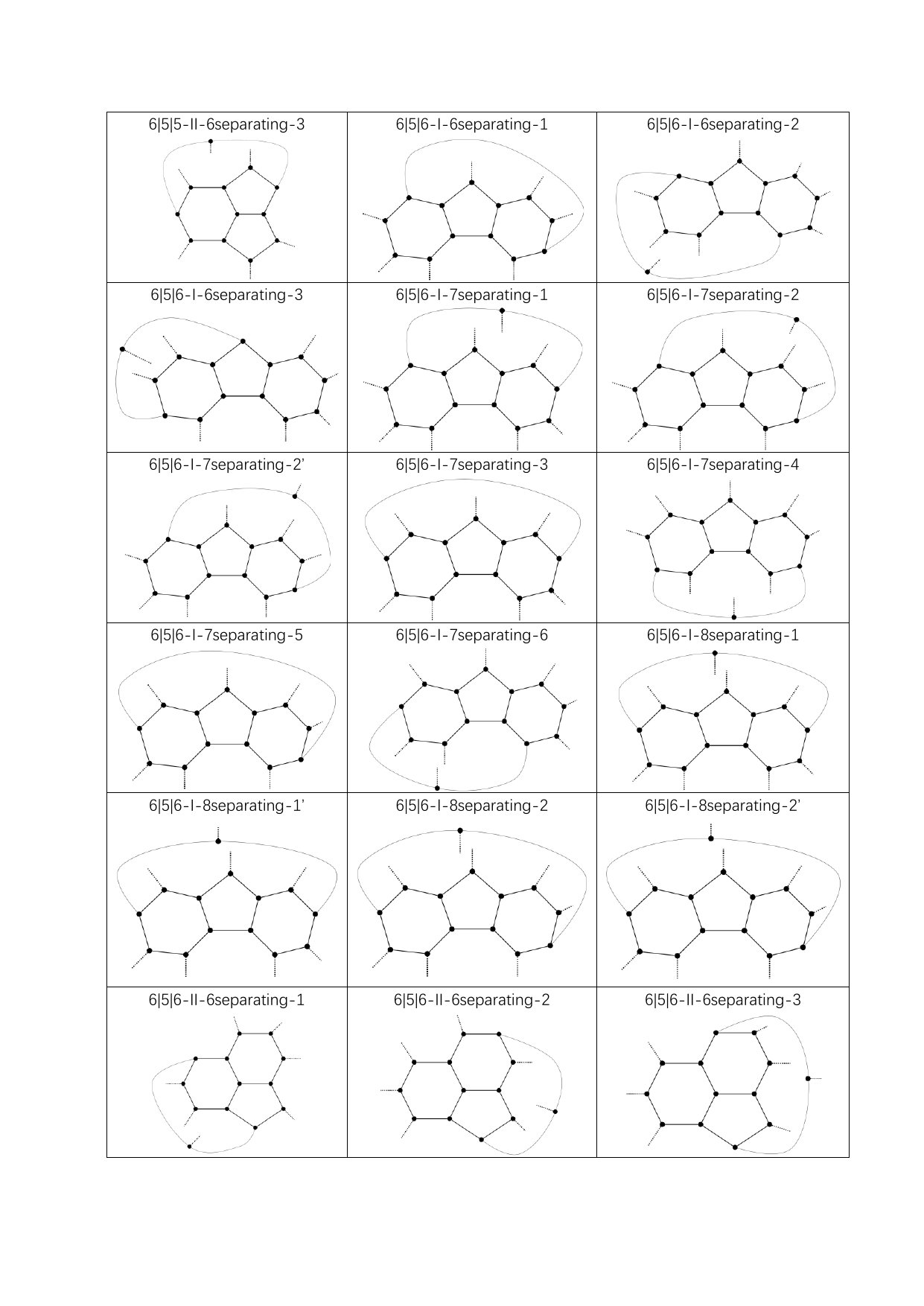}
\end{center}
\end{figure}

\begin{figure}
\vspace{-20mm}
\begin{center}
\hspace{-15mm}
\includegraphics[scale=0.9]{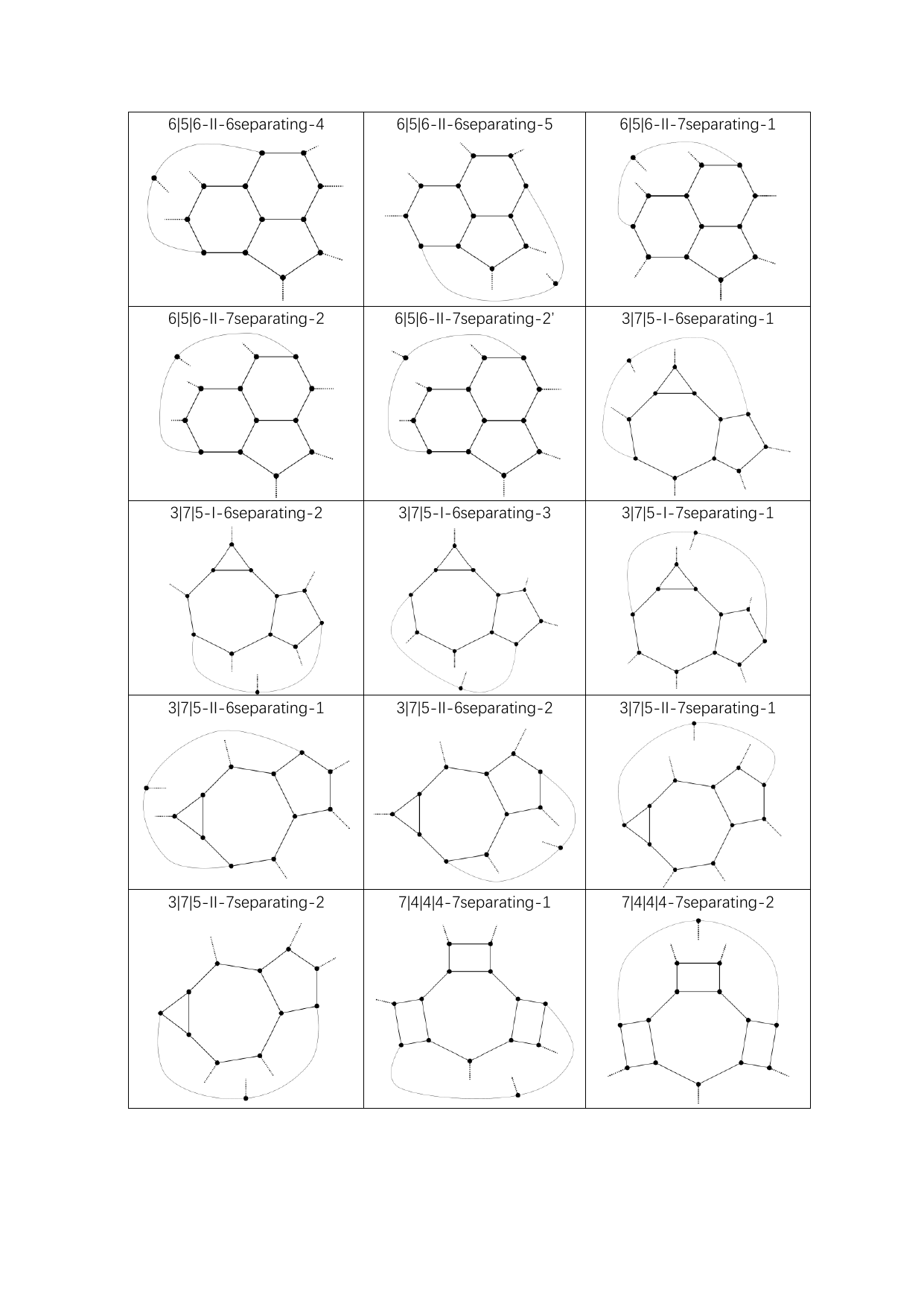}
\end{center}
\end{figure}

\begin{figure}
\vspace{-20mm}
\begin{center}
\hspace{-15mm}
\includegraphics[scale=0.9]{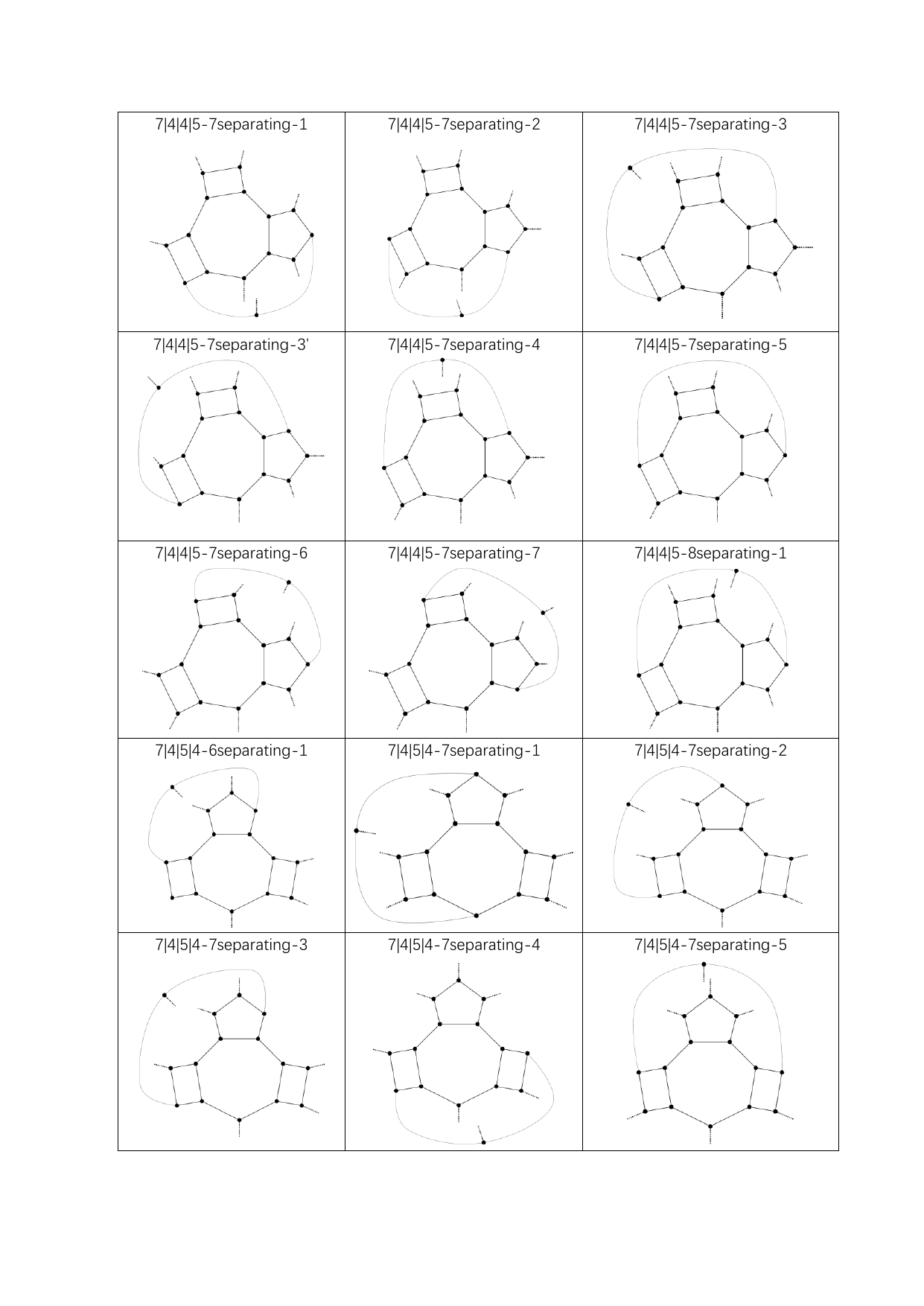}
\end{center}
\end{figure}

\begin{figure}
\vspace{-20mm}
\begin{center}
\hspace{-15mm}
\includegraphics[scale=0.9]{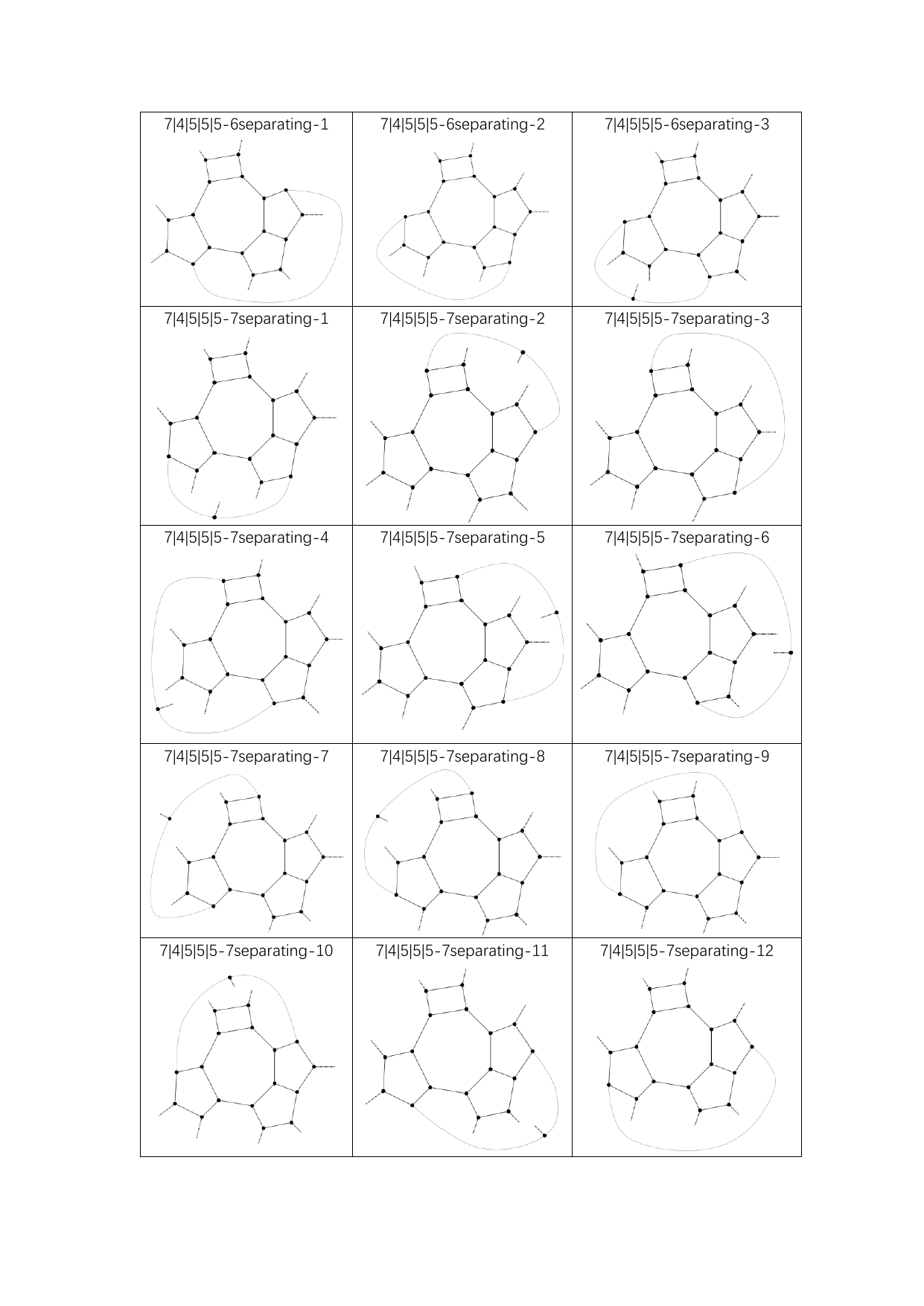}
\end{center}
\end{figure}

\newpage
\begin{figure}
  \vspace{-25mm}
  \hspace{-15mm}
  \includegraphics[scale=0.9]{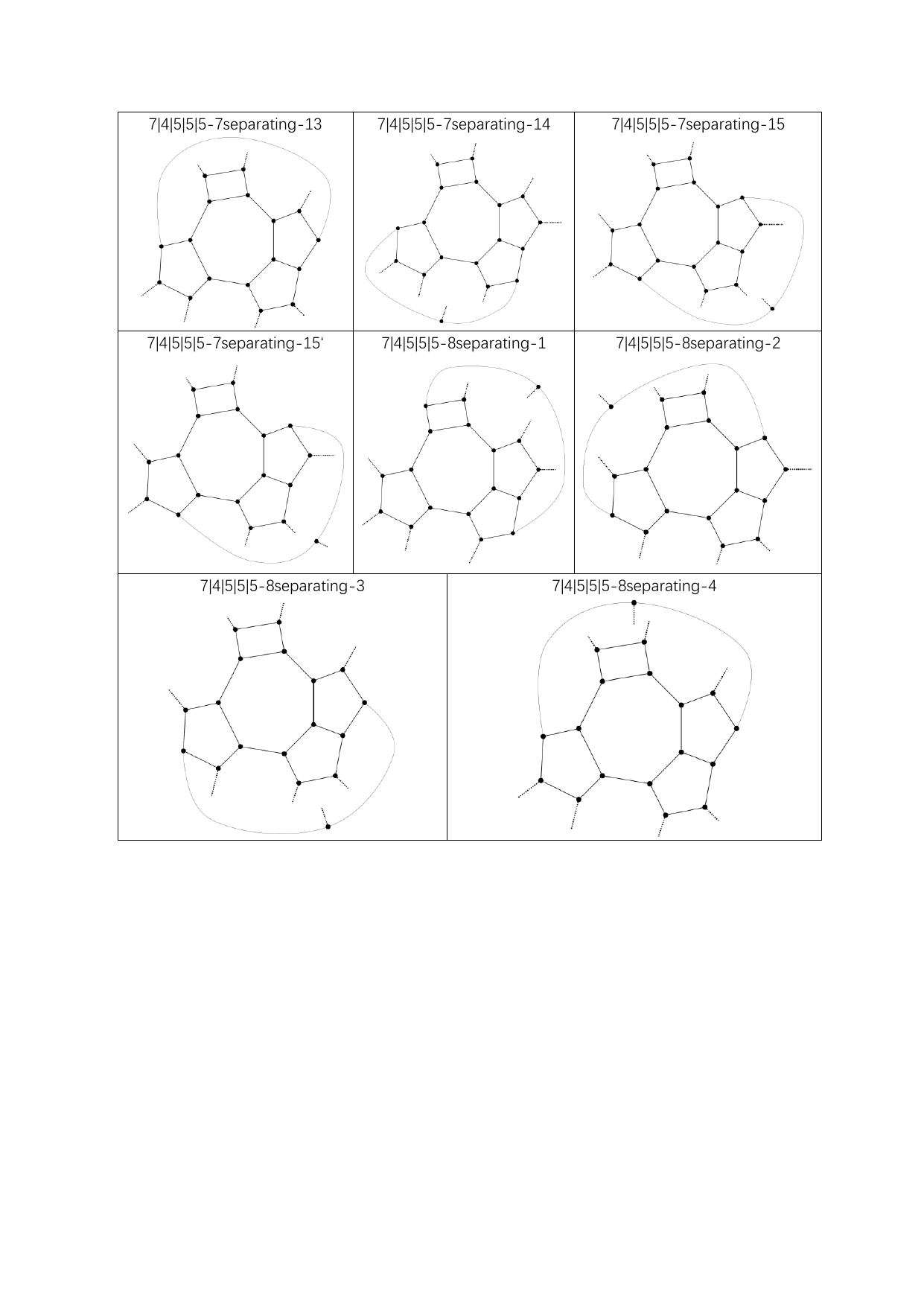}
  \vspace{-93mm}
  \caption{Boundary vertices, pendant vertices, and separating cycles.}\label{separating}
  \vspace{-3mm}
\end{figure}

\subsection{C++ Program}

\begin{lstlisting}
#include <iostream>
#include <fstream>
#include <vector>
#include <cassert>
#include <string>
#include <stdlib.h>
#include <time.h>

using namespace std;

const int V = 100;

#define COLORING_122222

/*
Colors used in packing coloring
0 . . uncolored
1 . . distance 1
2 . . distance 2
3 . . distance 2
4 . . distance 2
5 . . distance 2
6 . . distance 2
*/

#ifdef COLORING_122222
const int COLORS = 6 ;
const int MAX_DIST = 2 ; // Largest distance
// In cubic graphs, pending vertices are boundary vertices in a reducible  
// configuration plus its two neighbors not in the reducible configuration.
const int PENDANT_VERTICES = 3 ; 

// Pending vertices are represented as an array of size 3 where
// the first vertex is the boundary vertex in the reducible configuration;
// the second and third vertices are the neighbors of the first vertex 
// not in the reducible configuration.
// Note we can always recolor the first vertex with color 1 if the second
// and third neighbors do not have color 1.
const int outer_colors[][ PENDANT_VERTICES]= {
	{6,1,2},{6,1,3},{6,1,4},{6,1,5},
	{5,1,2},{5,1,3},{5,1,4},{5,1,6},
	{4,1,2},{4,1,3},{4,1,5},{4,1,6},
	{3,1,2},{3,1,4},{3,1,5},{3,1,6},
	{2,1,3},{2,1,4},{2,1,5},{2,1,6},
	{1,2,3},{1,2,4},{1,2,5},{1,2,6},{1,3,4},{1,3,5},{1,3,6},{1,4,5},{1,4,6},{1,5,6}
} ;

const int  outer_cnt = 30 ;
const int  outer_cnt_first = 2 ;
const int  coldist[] = { 0 , 0 , 1 , 1 , 1 , 1, 1 } ; // Index of color to conflict graph
const char* filename = "C6C5C6_typeII_extra_edge"; // Input file name
#endif



vector<vector<int> > triples;

// Adjacency matrix
int G[MAX_DIST][V][V] ; // Index 0 and index 1 are G and G^2
int v = 0 ; // Number of vertices

// Adjacency matrix with extra edges
int G_extra_edge[MAX_DIST][V][V] ; // Index 0 and index 1 are G and G^2

// List of degrees
int deg[MAX_DIST][V]; // Index 0 and index 1 are G and G^2
// List of neighbors
int N[MAX_DIST][V][V] ; // Index 0 and index 1 are G and G^2

// List of degrees with extra edges
int deg_extra_edge[MAX_DIST][V] ; // Index 0 and index 1 are G and G^2
// List of neighbors with extra edges
int N_extra_edge[MAX_DIST][V][V] ; // Index 0 and index 1 are G and G^2

int coloring[V] ;

clock_t clock_start,clock_end; // Time count

// Print current coloring
void print_coloring()
{
	for(int x = 0 ; x < v ; x++) {
		cout << coloring[ x ] << " " ;
	}
}

// Recursively check whether the current partial coloring is extendable
// x is the index of current vertex to color
bool is_coloring_extendable( int x ){
	// We have colored all the vertices
	if ( x >= v )
	{
		return true;
	}

	// If the current vertex x has been colored, then try next vertex x+1
	if ( coloring[ x ] != 0) return is_coloring_extendable(x+1) ;

	// Now color vertex x from color 1 to color COLORS
	for ( int c = 1 ; c <= COLORS; c++)
	{
		// Test if color c is possible
		bool found_conflict = false;
		for ( int i = 0 ; i < deg[ coldist[c] ][ x ] ; i++)
		{
			if ( coloring[ N[ coldist[ c ] ][ x ][ i ] ] == c )
			{
				found_conflict = true ;
				break ;
			}
		}
		
		if ( found_conflict ) continue ;

		coloring[ x ] = c ;
		if ( is_coloring_extendable(x+1) )
		{
			coloring[ x ] = 0 ;
			return true ;
		}
	}

	coloring[ x ] = 0 ;
	return false;
}

// Recursively generate partial colorings for pendant vertices triples from index to_color to the last one
// to_color is the index of the next pendant vertices triple
bool generate_precolorings( int to_color )
{
	// Remove any color for pendant vertiecs triples from to_color to the last one
	for (int next_triple = to_color; next_triple < ( int ) triples.size( ); next_triple++ )
	{
		for ( int i = 0 ; i < PENDANT_VERTICES; i++)
		{
			coloring[ triples[ next_triple ][ i ] ] = 0 ;
		}
	}

	// Before starting coloring vertices, first check whether the color assignment
	// of the pendant vertices have no conflict as we add extra edges between them.
	// If there is conflict, it means the color assignment of pendant vertices is 
	// not valid. So we do not have to check this case and can just return true.
	for ( int x = 0 ; x <= v-1; x++)
	{
		int c = coloring[ x ];
		if (c == 0)
		{
			continue;
		}

		for ( int i = 0 ; i < deg_extra_edge[ coldist[c] ][ x ] ; i++)
		{
			if ( coloring[ N_extra_edge[ coldist[ c ] ][ x ][ i ] ] == c )
			{
				return true;
			}
		}
	}

	// When all the pendant vertices have colors, call is_coloring_extendable to 
	// extend this coloring to all the vertices.
	if ( to_color >= ( int ) triples.size( ) )
	{
		if ( !is_coloring_extendable( 0 ) )
		{
			cout << " Precoloring " ;
			print_coloring( ) ;
			cout << " does not extend " << endl ;
			return false;
		}
		return true;
	}

	// Color the current pendant vertices by all possibilities,
	// then recursively generate partial colorings
	for ( int oc = 0 ; oc < outer_cnt ; oc++)
	{
		for ( int i = 0 ; i < PENDANT_VERTICES; i++)
		{
			coloring[ triples[ to_color ][ i ] ] = outer_colors[ oc ][ i ] ;
		}
		if ( ! generate_precolorings( to_color+1) ) return false;
	}

	return true ;
}

// Initialize generate coloring for pendant vertices and 
// extend this coloring to all vertices
bool generate_precolorings_first( )
{
	if (triples.size() > 0) 
	{
		for ( int oc = 0 ; oc < outer_cnt_first; oc++)
		{
			for ( int i = 0 ; i < PENDANT_VERTICES; i++) {
				coloring[ triples[ 0 ][ i ] ] = outer_colors[ oc ][ i ] ;
			}
			if ( ! generate_precolorings(1) ) return false;
		}
	}
	else
	{
		if ( ! generate_precolorings( 0 ) ) return false;
	}

	return true;
}




int main ( int argc , char* argv [ ] )
{
	// Start time count
	clock_start = clock();

	// Read input file
	ifstream input ;
	input.open( filename ) ;
	cerr << "Opening file : " << filename << endl ;

	while( input.good( ) )
	{
		// First line of input file is the name of reducible configuration.
		string name;
		while ( input.good( ) && (name.length( )==0 || name == " " || name == "  " || name == "   " ) )
		{
			std::getline( input, name ) ;
			cerr << name << endl ;
		}
		if ( ! input.good( ) ) break ;
		cout << " ' " << name << " ' " ;

		// First line of input file is the number of pendant vertices triples.
		int triples_count = 0 ;
		input >> triples_count;

		// Next triples_count lines are the pendant vertices triples.
		cout << "Loading " << triples_count << " triples " << endl ;
		triples.clear();
		for ( int i = 0 ; i < triples_count ; i++)
		{
			vector<int> abc;
			for ( int i = 0 ; i < PENDANT_VERTICES; i++)
			{
				int x;
				input >> x ;
				abc.push_back( x ) ;
			}
			triples.push_back( abc ) ;
		}

		// Load graph without extra edges
		// Next line is the number of vertices
		input >> v ;
		cout << "Loading graph without extra edges on " << v << " vertices " << endl ;
		assert( v < V);

		// Input is adjacency matrix, edge is 2, non-edge is 1
		for ( int x = 0 ; x < v ; x++)
		{
			G[ 0 ][ x ][ x ] = 0 ;
			for ( int y = x+1; y < v ; y++)
			{
				int e ;
				input >> e ;
				G[ 0 ][ x ][ y ] = e - 1;
				G[ 0 ][ y ][ x ] = e - 1;
			}
		}

		// Generate distances of G and G^2 without extra edges
		for ( int g = 1 ; g < MAX_DIST; g++)
		{
			for ( int x = 0 ; x < v ; x++)
			{
				G[ g ][ x ][ x ] = 0 ;
				for ( int y = x+1; y < v ; y++)
				{
					G[ g ][ y ][ x ] = G[ g ][ x ] [ y ]= G[ g - 1 ][ x ][ y ] ;
					for ( int z = 0 ; z < v ; z++)
					{
						if (G[ g - 1 ] [ x ] [ z ] == 1 && G[ 0 ] [ y ] [ z ] == 1)
						{
							G[ g ] [ x ] [ y ] = 1 ;
							G[ g ] [ y ] [ x ] = 1 ;
						}
					}
				}
			}
		}


		// Create list of neighbors without extra edges
		for ( int g = 0 ; g < MAX_DIST; g++)
		{
			for ( int x = 0 ; x < v ; x++)
			{
				deg[ g ][ x ] = 0 ;
				for ( int y = 0 ; y < v ; y++)
				{
					if ( y == x || G[ g ][ x ][ y ] == 0) continue ;
					N[ g ][ x ][ deg[ g ][ x ]++ ] = y ;
				}
			}
		}


		// Load graph with extra edges
		// Next line is the number of vertices
		input >> v ;
		cout << "Loading graph with extra edges on " << v << " vertices " << endl ;
		assert( v < V);

		// Input is adjacency matrix, edge is 2, non-edge is 1
		for ( int x = 0 ; x < v ; x++)
		{
			G_extra_edge[ 0 ][ x ][ x ] = 0 ;
			for ( int y = x+1; y < v ; y++)
			{
				int e ;
				input >> e ;
				G_extra_edge[ 0 ][ x ][ y ] = e - 1;
				G_extra_edge[ 0 ][ y ][ x ] = e - 1;
			}
		}

		// Generate distances of G and G^2 with extra edges
		for ( int g = 1 ; g < MAX_DIST; g++)
		{
			for ( int x = 0 ; x < v ; x++)
			{
				G_extra_edge[ g ][ x ][ x ] = 0 ;
				for ( int y = x+1; y < v ; y++)
				{
					G_extra_edge[ g ][ y ][ x ] = G_extra_edge[ g ][ x ] [ y ] = G_extra_edge[ g - 1 ][ x ][ y ] ;
					for ( int z = 0 ; z < v ; z++)
					{
						if (G_extra_edge[ g - 1 ] [ x ] [ z ] == 1 && G_extra_edge[ 0 ] [ y ] [ z ] == 1)
						{
							G_extra_edge[ g ] [ x ] [ y ] = 1 ;
							G_extra_edge[ g ] [ y ] [ x ] = 1 ;
						}
					}
				}
			}
		}


		// Create list of neighbors with extra edges
		for ( int g = 0 ; g < MAX_DIST; g++)
		{
			for ( int x = 0 ; x < v ; x++)
			{
				deg_extra_edge[ g ][ x ] = 0 ;
				for ( int y = 0 ; y < v ; y++)
				{
					if ( y == x || G_extra_edge[ g ][ x ][ y ] == 0) continue ;
					N_extra_edge[ g ][ x ][ deg_extra_edge[ g ][ x ]++ ] = y ;
				}
			}
		}


		// Uncolor everything
		for ( int x = 0 ; x < v ; x++) 
			coloring[ x ] = 0 ;

		if ( generate_precolorings_first( ) )
		{
			cout << "Reducible " << endl ;
		}
	}

	input.close();

	// Time count end
	clock_end = clock();
	cout<<"time = "<<double(clock_end-clock_start)/CLOCKS_PER_SEC<<" seconds"<<endl;

	return 0;
}
\end{lstlisting}

\subsection{Sample input file}

\begin{lstlisting}
C6C5C6_typeII_extra_edge
9
13 14 15
16 17 18
19 20 21
22 23 24
25 26 27
28 29 30
31 32 33
34 35 36
37 38 39
40  2 1 1 1 1 1 1 1 1 1 1 2 2 1 1 1 1 1 1 1 1 1 1 1 1 1 1 1 1 1 1 1 1 1 1 1 1 1 1  2 1 1 1 1 1 1 1 1 1 1 1 1 1 2 1 1 1 1 1 1 1 1 1 1 1 1 1 1 1 1 1 1 1 1 1 1 1  2 1 1 1 1 1 1 2 1 1 1 1 1 1 1 1 1 1 1 1 1 1 1 1 1 1 1 1 1 1 1 1 1 1 1 1 1  2 1 1 1 1 1 1 1 1 1 1 1 1 1 1 2 1 1 1 1 1 1 1 1 1 1 1 1 1 1 1 1 1 1 1 1  2 1 1 1 2 1 1 1 1 1 1 1 1 1 1 1 1 1 1 1 1 1 1 1 1 1 1 1 1 1 1 1 1 1 1  2 1 1 1 1 1 1 1 1 1 1 1 1 1 1 1 2 1 1 1 1 1 1 1 1 1 1 1 1 1 1 1 1 1  2 1 1 1 1 1 1 1 1 1 1 1 1 1 1 1 1 1 2 1 1 1 1 1 1 1 1 1 1 1 1 1 1  2 1 1 1 1 1 1 1 1 1 1 1 1 1 1 1 1 1 1 1 2 1 1 1 1 1 1 1 1 1 1 1  2 1 1 1 1 1 1 1 1 1 1 1 1 1 1 1 1 1 1 1 1 1 2 1 1 1 1 1 1 1 1  2 1 1 1 1 1 1 1 1 1 1 1 1 1 1 1 1 1 1 1 1 1 1 1 1 1 1 1 1 1  2 1 1 1 1 1 1 1 1 1 1 1 1 1 1 1 1 1 1 1 1 1 1 1 1 1 1 1 1  2 1 1 1 1 1 1 1 1 1 1 1 1 1 1 1 1 1 1 1 1 1 2 1 1 1 1 1  1 1 1 1 1 1 1 1 1 1 1 1 1 1 1 1 1 1 1 1 1 1 1 1 2 1 1  2 2 1 1 1 1 1 1 1 1 1 1 1 1 1 1 1 1 1 1 1 1 1 1 1 1  1 1 1 1 1 1 1 1 1 1 1 1 1 1 1 1 1 1 1 1 1 1 1 1 1  1 1 1 1 1 1 1 1 1 1 1 1 1 1 1 1 1 1 1 1 1 1 1 1  2 2 1 1 1 1 1 1 1 1 1 1 1 1 1 1 1 1 1 1 1 1 1  1 1 1 1 1 1 1 1 1 1 1 1 1 1 1 1 1 1 1 1 1 1  1 1 1 1 1 1 1 1 1 1 1 1 1 1 1 1 1 1 1 1 1  2 2 1 1 1 1 1 1 1 1 1 1 1 1 1 1 1 1 1 1  1 1 1 1 1 1 1 1 1 1 1 1 1 1 1 1 1 1 1  1 1 1 1 1 1 1 1 1 1 1 1 1 1 1 1 1 1  2 2 1 1 1 1 1 1 1 1 1 1 1 1 1 1 1  1 1 1 1 1 1 1 1 1 1 1 1 1 1 1 1  1 1 1 1 1 1 1 1 1 1 1 1 1 1 1  2 2 1 1 1 1 1 1 1 1 1 1 1 1  1 1 1 1 1 1 1 1 1 1 1 1 1  1 1 1 1 1 1 1 1 1 1 1 1  2 2 1 1 1 1 1 1 1 1 1  1 1 1 1 1 1 1 1 1 1  1 1 1 1 1 1 1 1 1  2 2 1 1 1 1 1 1  1 1 1 1 1 1 1  1 1 1 1 1 1  2 2 1 1 1  1 1 1 1  1 1 1  2 2  1  
40  2 1 1 1 1 1 1 1 1 1 1 2 2 1 1 1 1 1 1 1 1 1 1 1 1 1 1 1 1 1 1 1 1 1 1 1 1 1 1  2 1 1 1 1 1 1 1 1 1 1 1 1 1 2 1 1 1 1 1 1 1 1 1 1 1 1 1 1 1 1 1 1 1 1 1 1 1  2 1 1 1 1 1 1 2 1 1 1 1 1 1 1 1 1 1 1 1 1 1 1 1 1 1 1 1 1 1 1 1 1 1 1 1 1  2 1 1 1 1 1 1 1 1 1 1 1 1 1 1 2 1 1 1 1 1 1 1 1 1 1 1 1 1 1 1 1 1 1 1 1  2 1 1 1 2 1 1 1 1 1 1 1 1 1 1 1 1 1 1 1 1 1 1 1 1 1 1 1 1 1 1 1 1 1 1  2 1 1 1 1 1 1 1 1 1 1 1 1 1 1 1 2 1 1 1 1 1 1 1 1 1 1 1 1 1 1 1 1 1  2 1 1 1 1 1 1 1 1 1 1 1 1 1 1 1 1 1 2 1 1 1 1 1 1 1 1 1 1 1 1 1 1  2 1 1 1 1 1 1 1 1 1 1 1 1 1 1 1 1 1 1 1 2 1 1 1 1 1 1 1 1 1 1 1  2 1 1 1 1 1 1 1 1 1 1 1 1 1 1 1 1 1 1 1 1 1 2 1 1 1 1 1 1 1 1  2 1 1 1 1 1 1 1 1 1 1 1 1 1 1 1 1 1 1 1 1 1 1 1 1 1 1 1 1 1  2 1 1 1 1 1 1 1 1 1 1 1 1 1 1 1 1 1 1 1 1 1 1 1 1 1 1 1 1  2 1 1 1 1 1 1 1 1 1 1 1 1 1 1 1 1 1 1 1 1 1 2 1 1 1 1 1  1 1 1 1 1 1 1 1 1 1 1 1 1 1 1 1 1 1 1 1 1 1 1 1 2 1 1  2 2 2 1 1 1 1 1 1 1 1 1 1 1 1 1 1 1 1 1 1 1 1 1 1 1  1 1 1 1 1 1 1 1 1 1 1 1 1 1 1 1 1 1 1 1 1 1 1 1 1  1 1 1 1 1 1 1 1 1 1 1 1 1 1 1 1 1 1 1 1 1 1 1 1  2 2 1 1 1 1 1 1 1 1 1 1 1 1 1 1 1 1 1 1 1 1 1  1 1 1 1 1 1 1 1 1 1 1 1 1 1 1 1 1 1 1 1 1 1  1 1 1 1 1 1 1 1 1 1 1 1 1 1 1 1 1 1 1 1 1  2 2 2 1 1 1 1 1 1 1 1 1 1 1 1 1 1 1 1 1  1 1 1 1 1 1 1 1 1 1 1 1 1 1 1 1 1 1 1  1 1 1 1 1 1 1 1 1 1 1 1 1 1 1 1 1 1  2 2 1 1 1 1 1 1 1 1 1 1 1 1 1 1 1  1 1 1 1 1 1 1 1 1 1 1 1 1 1 1 1  1 1 1 1 1 1 1 1 1 1 1 1 1 1 1  2 2 2 1 1 1 1 1 1 1 1 1 1 1  1 1 1 1 1 1 1 1 1 1 1 1 1  1 1 1 1 1 1 1 1 1 1 1 1  2 2 1 1 1 1 1 1 1 1 1  1 1 1 1 1 1 1 1 1 1  1 1 1 1 1 1 1 1 1  2 2 2 1 1 1 1 1  1 1 1 1 1 1 1  1 1 1 1 1 1  2 2 1 1 1  1 1 1 1  1 1 1  2 2  1
\end{lstlisting}

\end{document}